\numberwithin{equation}{section}
\newtheorem{lemma}{Lemma}
\newtheorem{theorem}{Theorem}
\newtheorem{proposition}{Proposition}
\newtheorem{corollary}[theorem]{Corollary}
\newtheorem{remark}{Remark}
\newtheorem{assumption}{Assumption}
\newcommand{\x}{\mathbf{x}}
\newcommand{\y}{\mathbf{y}}
\newcommand{\z}{\mathbf{z}}
\newcommand{\w}{\mathbf{w}}
\newcommand{\q}{\mathbf{q}}
\newcommand{\s}{\mathbf{s}}
\newcommand{\g}{\mathbf{g}}
\newcommand{\p}{\mathbf{p}}
\newcommand{\0}{\mathbf{0}}
\newcommand{\1}{\mathbf{1}}
\newcommand{\uu}{\mathbf{u}}
\renewcommand{\c}{\mathbf{c}}
\newcommand{\vv}{\mathbf{v}}
\renewcommand{\b}{\mathbf{b}}
\newcommand{\A}{\bm{\mathit{A}}}
\newcommand{\B}{\bm{\mathit{B}}}
\newcommand{\J}{\bm{\mathit{J}}}
\renewcommand{\S}{\bm{\mathit{S}}}
\newcommand{\E}{\bm{\mathbb{E}}}
\newcommand{\R}{\mathbb{R}}
\newcommand{\IS}{\mathbb{I}}
\newcommand{\DS}{\mathbb{D}}
\newcommand{\<}{\left\langle}
\renewcommand{\>}{\right\rangle}
\DeclareMathOperator*{\argmin}{argmin}
\begin{document}

\title{
On the Complexity Analysis of the Primal Solutions for the Accelerated Randomized Dual Coordinate Ascent
}

\author{
Huan Li
\thanks{
Nanjing University of Aeronautics and Astronautics;
email: lihuanss@nuaa.edu.cn;
This work was done when Huan Li was a Ph.D student at Peking University;
}
\qquad
Zhouchen Lin
\thanks{
Peking University;
email: zlin@pku.edu.cn;
}
}

\maketitle

\begin{abstract}
  Dual first-order methods are essential techniques for large-scale constrained convex optimization. However, when recovering the primal solutions, 
  we need $T(\epsilon^{-2})$ iterations to achieve an $\epsilon$-optimal primal solution when we apply an algorithm to the non-strongly convex dual problem with $T(\epsilon^{-1})$ iterations to achieve an $\epsilon$-optimal dual solution, where $T(x)$ can be $x$ or $\sqrt{x}$. In this paper, we prove that the iteration complexity of the primal solutions and dual solutions have the same $O\left(\frac{1}{\sqrt{\epsilon}}\right)$ order of magnitude for the accelerated randomized dual coordinate ascent. When the dual function further satisfies the quadratic functional growth condition, by restarting the algorithm at any period, we establish the linear iteration complexity for both the primal solutions and dual solutions even if the condition number is unknown. When applied to the regularized empirical risk minimization problem, we prove the iteration complexity of $O\left(n\log n+\sqrt{\frac{n}{\epsilon}}\right)$ in both primal space and dual space, where $n$ is the number of samples. Our result takes out the $\left(\log \frac{1}{\epsilon}\right)$ factor compared with the methods based on smoothing/regularization or Catalyst reduction. As far as we know, this is the first time that the optimal $O\left(\sqrt{\frac{n}{\epsilon}}\right)$ iteration complexity in the primal space is established for the dual coordinate ascent based stochastic algorithms. We also establish the accelerated linear complexity for some problems with nonsmooth loss, i.e., the least absolute deviation and SVM.
\end{abstract}

\section{Introduction}
In this paper, we study the following structured constrained convex optimization problem:
\begin{eqnarray}
\begin{aligned}\label{problem}
&\min_{\x\in \R^t}\quad F(\x)\equiv f(\x)+\frac{1}{n}\sum_{i=1}^n\phi_i(\A_i^T\x),\\
&s.t. \hspace*{0.62cm}\B\x+\b=\0,\\
&\hspace*{1.15cm} g_i(\x)\leq 0,\quad i=1,\cdots,m,
\end{aligned}
\end{eqnarray}
where $\A\in\R^{t\times n}$, $\B\in\R^{p\times t}$, each $\phi_i$ and $g_i$ is convex and $f$ is $\mu$-strongly convex. Both $f$ and $\phi_i$ can be non-differentiable. In machine learning, each column of $\A$ often represents a data point. $\phi_i$ is often the loss function, e.g., $\phi_i(y)=|y|$ for the absolute deviation and $\phi_i(y)=\max\{0,1-\l_iy\}$ for SVM, where $\l_i\in\{\pm1\}$ is the label for the $i$-th data. $f$ is often the regularizer, e.g., the $L_2$ regularization $f(\x)=\|\x\|_2^2$ and $L_1$-$L_2$ regularization $f(\x)=\|\x\|_2^2+\sigma\|\x\|_1$. Problem (\ref{problem}) is actually very general to incorporate many existing problems in machine learning. When dropping the constraints, problem (\ref{problem}) becomes the regularized empirical risk minimization (ERM) problem associated with linear predictors:
\begin{eqnarray}
\begin{aligned}\label{problem2}
&&\min_{\x\in \R^t}\quad F(\x)\equiv f(\x)+\frac{1}{n}\sum_{i=1}^n\phi_i(\A_i^T\x).
\end{aligned}
\end{eqnarray}
The ERM problem is widely used in machine learning. Please see \cite{zhang-2013-jmlr,zhang-2015-MP} for examples.

Due to the complicated constraints, people often do not solve problem (\ref{problem}) directly. Instead, they solve its dual problem by introducing the Lagrangian function. Many first-order methods can be used to solve the dual problem, e.g., the dual full gradient ascent (DFGA) \cite{Tseng-1990}, the accelerated DFGA (ADFGA) \cite{Beck-2014,ma-2013}, the randomized dual coordinate ascent (RDCA) \cite{Nesterov-2012,lu-2015-MP,richtarik-2014,zhang-2013-jmlr} and the accelerated RDCA (ARDCA) \cite{Nesterov-2012,fercoq-2015-siam,xiao-2015-siam,zhang-2015-MP}\footnote{Although the algorithm studied in this paper is a special case of APCG in \cite{xiao-2015-siam} and APPROX in \cite{fercoq-2015-siam}, we name it ARDCA to emphasize the application to the dual problem.}. They need $O\left(\frac{1}{\epsilon}\right)$, $O\left(\frac{1}{\sqrt{\epsilon}}\right)$, $O\left(\frac{\hat n}{\epsilon}\right)$ and $O\left(\frac{\hat n}{\sqrt{\epsilon}}\right)$ iterations to achieve an $\epsilon$-optimal dual solution, respectively, where $\hat n$ is the dimension in the dual space. At each iteration, RDCA and ARDCA choose one coordinate to sufficiently increase the dual objective value while keeping the others fixed. The cost at each iteration of RDCA and ARDCA may be much lower than that of DFGA and ADFGA. Since both $f$ and $\phi_i$ can be non-differentiable, the dual function is non-strongly convex. So only the sublinear complexity can be obtained.

It is not satisfactory to establish the iteration complexity only in the dual space. We should recover the primal solutions from the dual iterates and need to estimate how quickly the primal solutions converge. Unfortunately, \cite{Lu-2016-siam} established the algorithm independent result that the iteration complexity in the primal space is worse than that in the dual space. Specifically, \cite{Lu-2016-siam} studied the following problem, which is a special case of problem (\ref{problem}),
\begin{eqnarray}
\begin{aligned}\label{problem1}
&\min_{\x\in \R^t}\quad f(\x),\\
&s.t. \hspace*{0.62cm}\B\x+\b=\0,\\
&\hspace*{1.15cm} g_i(\x)\leq 0,\quad i=1,\cdots,m.
\end{aligned}
\end{eqnarray}
For a pair of approximate primal-dual solution $\{\x^*(\uu^K),\uu^K\}$\footnote{$\x^*(\uu)$ is recovered form $\uu$ and will be defined in (\ref{xu_solution}) later.}, the precision between $\x^*(\uu^K)$ and $\uu^K$ satisfies
\begin{eqnarray}
\begin{aligned}\notag
&|f(\x^*(\uu^K))-f(\x^*)|\leq O\left( \sqrt{D(\uu^K)-D(\uu^*)}+D(\uu^K)-D(\uu^*) \right),\\
&\left\| \left[
  \begin{array}{c}
    \B\x^*(\uu^K)+\b\\
    \max\left\{0,g(\x^*(\uu^K))\right\}
  \end{array}
\right]\right\|\leq O\left(\sqrt{D(\uu^K)-D(\uu^*)}\right),
\end{aligned}
\end{eqnarray}
where $D(\uu)$ is the negative of the dual function and $(\x^*,\uu^*)$ is a pair of optimal primal-dual solution. Thus if some algorithm achieves an $\epsilon$-optimal dual solution\footnote{We define an $\epsilon$-optimal dual solution as $D(\uu)-D(\uu^*)\leq\epsilon$.} of $\uu^K$ after $T(\epsilon^{-1})$ iterations, it only achieves an $\sqrt{\epsilon}$-optimal primal solution\footnote{We define an $\epsilon$-optimal primal solution as $|F(\x)-F(\x^*)|\leq\epsilon$ and $\left\| \left[\hspace*{-0.15cm}
  \begin{array}{c}
    \B\x+\b\\
    \max\left\{0,g(\x)\right\}
  \end{array}\hspace*{-0.15cm}
\right]\right\|\leq\epsilon$. $\|\cdot\|$ can be a general norm.} of $\x^*(\uu^K)$ after the same time. \cite{Lu-2016-siam} studied DFGA and ADFGA and established the $O\left(\frac{1}{\epsilon^2}\right)$ and $O\left(\frac{1}{\epsilon}\right)$ iteration complexity in the primal space to find an $\epsilon$-optimal primal solution. \cite{Dunner-2016-icml} proved the similar algorithm independent results for problem (\ref{problem2}). \cite{Kim-2016} proved the $O\left(\frac{1}{\epsilon^{2/3}}\right)$ iteration complexity to achieve an $\epsilon$-optimal primal solution for the deterministic accelerated full gradient methods for problem (\ref{problem2}).

Some researchers used regularization/smoothing to improve the iteration complexity of the primal solutions. They added a small regularization term $\epsilon\|\uu\|^2$ to the dual function to smooth the primal objective and solved a regularized dual problem by some algorithm with linear convergence rate. \cite{Devolder-2012-siam} applied ADFGA to a smoothed problem of (\ref{problem1}) and \cite{Necoara-2016} used ADFGA to solve a regularized dual problem of conic convex programming. However, they established the suboptimal iteration complexity of $O\left(\frac{1}{\sqrt{\epsilon}}\log \frac{1}{\epsilon}\right)$  to achieve an $\epsilon$-optimal primal solution recovered from the last dual iterate, which has an additional $\left(\log\frac{1}{\epsilon}\right)$ factor. The drawback of this strategy in practice is that it needs to choose the parameter $\epsilon$ in advance, which is related to the target accuracy. It is desirable to develop direct support for problems with non-smooth primal objective or non-strongly convex dual objective.

Other researchers improved the iteration complexities of DFGA and ADFGA in the primal space via averaging the primal solutions appropriately. \cite{Tseng-2008} studied the problem of $\min_{\x}\max_{\vv}\psi(\x,\vv)+P(\x)$ and established the $O\left(\frac{1}{\sqrt{\epsilon}}\right)$ iteration complexity measured by the duality gap for the accelerated full gradient method. \cite{Necoara-2014-ieee} and \cite{Patrinos-2013-ieee} used \cite{Tseng-2008}'s result for ADFGA to solve the embedded linear model predictive control problem, which is a special case of problem (\ref{problem1}). \cite{Necoara-2016} proved the $O\left(\frac{1}{\epsilon}\right)$ iteration complexity for DFGA and $O\left(\frac{1}{\sqrt{\epsilon}}\right)$ iteration complexity for ADFGA to achieve an $\epsilon$-optimal averaged primal solution for conic convex programming. None of them studied the general problem (\ref{problem}) and none of them studied the methods based on randomized dual coordinate ascent.

The randomized coordinate descent and its accelerated version have received extensive attention recently for solving large-scale optimization problems since it can break down the problem into smaller pieces. \cite{zhang-2013-jmlr} showed that the Stochastic Dual Coordinate Ascent (SDCA) needs $O\left(n\log n+\frac{1}{\epsilon}\right)$ iterations to reach an $\epsilon$-optimal solution in both the primal space and dual space for problem (\ref{problem2}). \cite{zhang-2015-MP} then developed an accelerated SDCA (ASDCA) and attained the suboptimal $O\left(\left(n+\sqrt{\frac{n}{\epsilon}}\right)\log\frac{1}{\epsilon}\right)$ iteration complexity to achieve an $\epsilon$-optimal primal solution via solving a regularized dual problem, which has the additional $\left(\log\frac{1}{\epsilon}\right)$ factor due to the smoothing/regularization technique. Catalyst \cite{lin-2015-nips}, a general scheme for accelerating first-order optimization methods, also yields the additional $\left(\log\frac{1}{\epsilon}\right)$ factor. The Accelerated randomized Proximal Coordinate Gradient (APCG) method \cite{xiao-2015-siam} is another famous method for problem (\ref{problem2}), which needs $O\left(\frac{n}{\sqrt{\epsilon}}\right)$ iterations to find a dual solution in $\epsilon$ accuracy. However, the sublinear complexity in the primal space is not established in \cite{xiao-2015-siam}. \cite{zhang-2017-jmlr} proposed a Stochastic Primal-Dual Coordinate method (SPDC) and \cite{lan-2017-MP} proposed a Randomized Primal-Dual Gradient method (RPDG) for problem (\ref{problem2}). They smoothed $\phi_i$ and achieved the $O\left(\left(n+\sqrt{\frac{n}{\epsilon}}\right)\log\frac{1}{\epsilon}\right)$ iteration complexity. When $\phi_i$ has $\frac{1}{\gamma}$-Lipschitz continuous gradient, ASDCA, APCG, SPDC and RPDG all have the accelerated linear complexity of $O\left(\left(n+\sqrt{\frac{n}{\gamma\mu}}\right)\log\frac{1}{\epsilon}\right)$.


\subsection{Contributions}
In this paper, we study the iteration complexity of the primal solutions when using ARDCA to solve the non-strongly convex dual problem. Specifically, we aim to prove that the complexity of the primal solutions has the same order of magnitude as that of the dual solutions.

For the general problem (\ref{problem}), when applying ARDCA to solve its dual problem, we prove the $O\left(\frac{\hat n}{\sqrt{\epsilon}}\right)$ iteration complexity of the primal solutions simply by averaging the last few primal iterates appropriately. This complexity has the same order of magnitude as that of the dual solutions and thus improves the theoretical results in \cite{Lu-2016-siam,Dunner-2016-icml}. As a comparison, literature \cite{Tseng-2008,Necoara-2014-ieee,Patrinos-2013-ieee,Necoara-2016} only studied ADFGA, which is much simpler than the analysis of ARDCA. Since we use ARDCA to solve the dual problem directly, rather than a regularized dual problem or a smoothed primal problem, our result takes out the $\left(\log\frac{1}{\epsilon}\right)$ factor compared with the smoothing/regularization based methods.

When the dual function satisfies the quadratic functional growth condition, by restarting ARDCA at any period, we prove the linear iteration complexity for both the primal solutions and dual solutions. Moreover, our analysis does not require the parameters of the algorithm depend on the condition number $\kappa$, which will be defined in Assumption \ref{assumption2} later and it is often difficult to estimate in practice. We show that ARDCA with restart outperforms RCDA for a wide range of inner iteration numbers and the optimal $O\left(\left(\hat n+\frac{\hat n}{\sqrt{\kappa}}\right)\log\frac{1}{\epsilon}\right)$ complexity can be attained when the inner iteration number is equal to $O\left(\hat n+\frac{\hat n}{\sqrt{\kappa}}\right)$. The difference with respect to \cite{zhengqu-2018} is that our analysis does not require the uniqueness of the optimal dual solution.

When applied to problem (\ref{problem2}), our work extends the theoretical results of \cite{xiao-2015-siam} and improves those of \cite{zhang-2015-MP}. We prove that ARDCA needs $O\left(n\log n+\sqrt{\frac{n}{\epsilon}}\right)$ iterations to find an $\epsilon$-optimal solution in both the primal space and dual space, while \cite{xiao-2015-siam} only proved the iteration complexity in the dual space. This complexity matches the theoretical lower bound \cite{Woodworth-2016} and state-of-the-art upper bound \cite{zhu-2017-stoc}. Our theory outperforms ASDCA \cite{zhang-2015-MP} and Catalyst \cite{lin-2015-nips} by the factor of $\left(\log \frac{1}{\epsilon}\right)$. As far as we know, we are the first to establish the optimal $O\left(\sqrt{\frac{n}{\epsilon}}\right)$ complexity in the primal space for the dual coordinate ascent based stochastic algorithms. When $\phi_i$ has $\frac{1}{\gamma}$-Lipschitz continuous gradient, ARDCA with restart has the optimal $O\left(\left(n+\sqrt{\frac{n}{\gamma\mu}}\right)\log\frac{1}{\epsilon}\right)$ complexity. Moreover, we establish the accelerated linear complexity of ARDCA with restart for some special problems with nonsmooth $\phi_i$, e.g., the least absolute deviation problem and support vector machine (SVM).

\subsection{Assumption, Notation and Problem Formulation}
\subsubsection{Assumption}
We study problem (\ref{problem}) under the following assumptions:
\begin{assumption}\label{assumption}
\item 1. $f$ is $\mu$-strongly convex over $\R^t$, i.e., $f(\y)\geq f(\x)+\<\s,\y-\x\>+\frac{\mu}{2}\|\y-\x\|^2,\forall \x,\y$, for every subgradient $\s\in\partial f(\x)$.

\item 2. $\phi_i$ is convex and $M$-Lipschitz continuous over $\R$, i.e., $|\phi_i(x)-\phi_i(y)|\leq M |x-y|,\forall x,y$.

\item 3. $g_i$ is convex and has bounded subgradients over $\R^t$, i.e., $\|\s\|\leq L_{g_i}$, $\forall \s\in\partial g_i(\x)$.

\item 4. There exists $\overline\x$ such that $g_i(\overline\x)<0$ and $\B\overline\x+\b=\0$.

\item 5. The optimal objective value of problem (\ref{problem}) is finite.
\end{assumption}

Assumption \ref{assumption}.4 is the Slater's condition. Assumption \ref{assumption}.4 and \ref{assumption}.5 ensure that the strong duality holds, i.e., the dual optimal value is equal to the primal optimal value \cite{Bertsekas-book}. Assumptions \ref{assumption}.1 and \ref{assumption}.3 will be used to establish the Lipschitz smoothness of part of the dual function in Lemma \ref{coordinate_LS}. Assumption \ref{assumption}.2 will be used to get the complexity for problem (\ref{problem}) from that of the reformulated problem (\ref{problem3}).

To make each iteration of the randomized dual coordinate ascent computationally efficient, we only consider the case that $g(\x)$ is a linear function for simplicity, i.e.,
\begin{eqnarray}
g(\x)=\J\x+\q,\label{function_g}
\end{eqnarray}
where $\J\in\R^{m\times t}$ and $\q\in\R^{m}$. However, the analysis in this paper suits for the general function $g(\x)$ satisfying Assumption \ref{assumption}.3.

In Section \ref{linar_sec}, we will prove the linear complexity of ARDCA with restart under the quadratic functional growth condition \cite{Necoara-2019,ma2016eb}. This condition is equivalent to the error bound condition in \cite{errorbound2,lewis2018} and is satisfied for broad applications in machine learning, e.g., the least absolute deviation and SVM \cite{lin2014}.
\begin{assumption}\label{assumption2}
$ D(\uu)$ satisfies the quadratic functional growth condition with respect to the norm $\|\cdot\|_L$, i.e., $\kappa\|\uu-\mbox{Proj}_{\DS^*}(\uu)\|_L^2\leq D(\uu)- D(\uu^*),\forall \uu$, where $\kappa>0$ is the condition number, $\uu^*$ is the optimal dual solution, $\mbox{Proj}_{\DS^*}(\uu)$ is the projection of $\uu$ onto the optimal dual solution set $\DS^*$ and $ D(\uu)$ is the negative of the dual function.
\end{assumption}

\subsubsection{Notation}
Lowercase bold letters $\uu,\vv,\z,\x,\y$ represent vectors, uppercase bold letters $\A,\B$ represent matrices and non-bold letters $\theta,\alpha$ represent scalars. Let $\R_+^m$ be the set of nonnegative vectors in $\R^m$ and $\hat n=n+p+m$ be the dimension of the dual variable. Denote $\uu_i$ and $\nabla_i d(\uu)$ as the $i$-th element of $\uu$ and $\nabla d(\uu)$, respectively. Let $\uu_{i:j}$ and $g_{1:m}(\x)$ be the vectors consisting of $\uu_i,\cdots,\uu_j$ and $g_1(\x),\cdots,g_m(\x)$, respectively. The value of $\uu$ at iteration $k$ is denoted by $\uu^k$. For scalars, e.g., $\theta$, $\theta_k$ represents its value at iteration $k$ and $\theta^2$ denotes its squares. $\A_i\in\R^t$ and $\A_{j,:}\in\R^n$ are the $i$-th column and $j$-th row of $\A$, respectively. We use $\|\cdot\|$ as the $l_2$ Euclidean norm and $\|\cdot\|_{\infty}$ as the infinite norm for a vector. Define the weighted norm $\|\x\|_L=\sqrt{\sum_iL_i\|\x_i\|^2}$ and its dual norm $\|\x\|_L^*=\sqrt{\sum_i\frac{1}{L_i}\|\x_i\|^2}$. For any matrix $\A$, $\|\A\|_2=\sigma_{\max}(\A)$ is the largest singular value of $\A$. $\lfloor x \rfloor$ ($\lceil x\rceil$) means the largest (smallest) integer less (larger) than or equal to $x$. For a function $\phi_i$, we use $\phi_i^*(u)=\sup_{v}\<u,v\>-\phi_i(v)$ to denote its conjugate and $\mbox{Prox}_{\phi_i}(v)=\argmin_{u} \phi_i(u)+\frac{1}{2}||u-v\|^2$ to denote its proximal mapping. Define $\phi(\y)=\sum_{i=1}^n\phi_i(\y_i)$. 

\subsubsection{Problem Formulation}
Reformulate problem (\ref{problem}) as
\begin{eqnarray}
\begin{aligned}\label{problem3}
&\min_{\x\in \R^t,\y\in\R^n}\quad f(\x)+\frac{1}{n}\sum_{i=1}^n\phi_i(\y_i),\\
&s.t. \hspace*{1.42cm}\frac{1}{n}(\A^T\x-\y)=\0,\\
&\hspace*{1.9cm} \B\x+\b=\0,\\
&\hspace*{1.93cm} g_i(\x)\leq 0,\quad i=1,\cdots,m,
\end{aligned}
\end{eqnarray}
and introduce the Lagrangian function as
\begin{eqnarray}\label{lagrangian}
L_F(\x,\y,\uu)\hspace*{-0.08cm}=\hspace*{-0.08cm}f(\x)\hspace*{-0.08cm}+\hspace*{-0.08cm}\frac{1}{n}\hspace*{-0.08cm}\sum_{i=1}^n\hspace*{-0.08cm}\phi_i(\y_i)\hspace*{-0.08cm}+\hspace*{-0.08cm}\frac{1}{n}\hspace*{-0.08cm}\<\uu_{1:n},\A^T\x\hspace*{-0.08cm}-\hspace*{-0.08cm}\y\>\hspace*{-0.08cm}+\hspace*{-0.08cm}\<\uu_{n+1:n+p},\B\x\hspace*{-0.08cm}+\hspace*{-0.08cm}\b\>\hspace*{-0.08cm}+\hspace*{-0.08cm}\sum_{i=1}^m \hspace*{-0.08cm}\uu_{n+p+i} g_i(\x),\hspace*{-0.2cm}
\end{eqnarray}
where $\uu\in\R^{\hat n}$ is the vector of the Lagrange multipliers. Define the dual feasible set as $\DS=\{\uu\in\R^{\hat n}:\uu_{n+p+1:n+p+m}\in \R_+^m\}$ and denote $\DS^*$ to be the optimal dual solution set. Then the Lagrange dual problem of (\ref{problem3}) can be expressed as
\begin{eqnarray}
\begin{aligned}
&\max_{\uu\in\DS}\min_{\x\in\R^t,\y\in\R^{n}} L_F(\x,\y,\uu)\notag\\
=&\max_{\uu\in\DS}\hspace*{-0.13cm}\left(\hspace*{-0.07cm}\min_{\x\in\R^t}\hspace*{-0.13cm}\left(\hspace*{-0.15cm}f\hspace*{-0.03cm}(\hspace*{-0.03cm}\x\hspace*{-0.03cm})\hspace*{-0.13cm}+\hspace*{-0.13cm}\<\hspace*{-0.03cm}\uu_{1:n},\hspace*{-0.06cm}\A^T\hspace*{-0.06cm}\x\hspace*{-0.03cm}/\hspace*{-0.03cm}n\hspace*{-0.03cm}\>\hspace*{-0.13cm}+\hspace*{-0.13cm}\<\hspace*{-0.03cm}\uu_{n\hspace*{-0.03cm}+\hspace*{-0.03cm}1:n\hspace*{-0.03cm}+\hspace*{-0.03cm}p},\hspace*{-0.06cm}\B\x\hspace*{-0.13cm}+\hspace*{-0.13cm}\b\hspace*{-0.03cm}\>\hspace*{-0.13cm}+\hspace*{-0.15cm}\sum_{i=1}^m \hspace*{-0.1cm} \uu_{n\hspace*{-0.03cm}+\hspace*{-0.03cm}p\hspace*{-0.03cm}+\hspace*{-0.03cm}i} g_i\hspace*{-0.03cm}(\hspace*{-0.03cm}\x\hspace*{-0.03cm})\hspace*{-0.15cm}\right)\hspace*{-0.13cm}+\hspace*{-0.13cm}\min_{\y\in\R^n}\hspace*{-0.07cm}\frac{1}{n}\hspace*{-0.13cm}\left(\hspace*{-0.07cm}\sum_{i=1}^n\hspace*{-0.1cm}\phi_i\hspace*{-0.03cm}(\hspace*{-0.03cm}\y_i\hspace*{-0.03cm})\hspace*{-0.13cm}-\hspace*{-0.13cm}\<\hspace*{-0.03cm}\uu_{1:n},\hspace*{-0.03cm}\y\hspace*{-0.03cm}\>\hspace*{-0.13cm}\right)\hspace*{-0.20cm}\right)\notag\\
=&\max_{\uu\in\DS} \left(L_f(\x^*(\uu),\uu)-\frac{1}{n}\sum_{i=1}^n\phi_i^*(\uu_{i})\right),\notag
\end{aligned}
\end{eqnarray}
where
\begin{eqnarray}
\begin{aligned}\label{xu_solution}
&L_f(\x,\uu)=f(\x)+\<\uu_{1:n},\A^T\x/n\>+\<\uu_{n+1:n+p},\B\x+\b\>+\sum_{i=1}^m \uu_{n+p+i} g_i(\x),\\
&\x^*(\uu)=\argmin_{\x\in\R^t} L_f(\x,\uu).
\end{aligned}
\end{eqnarray}
Define
\begin{eqnarray}
d(\uu)=-L_f(\x^*(\uu),\uu)\quad\mbox{and}\quad
h_i(\uu_i)=\left\{
  \begin{array}{lcl}
    \frac{1}{n}\phi_{i}^*(\uu_i),&& i=1,\cdots,n,\\
    0, && n+1\leq i\leq n+p,\\
    I_{u\geq 0}(\uu_i), && i> n+p,
  \end{array}
\right.\label{cont28}
\end{eqnarray}
where $I_{u\geq 0}(u)=\left\{
  \begin{array}{ll}
    0, & \mbox{if }u\geq 0,\\
    \infty, & \mbox{otherwise}.
  \end{array}
\right.$ Then we can rewrite the Lagrange dual problem as
\begin{eqnarray}\label{dual_problem}
\min_{\uu\in\R^{\hat n}}  D(\uu)=d(\uu)+h(\uu),
\end{eqnarray}
where we define $h(\uu)=\sum_{i=1}^{\hat n} h_i(\uu_{i})$ and $D(\uu)$ means the negative of the dual function. We study the negative of the dual function, rather than the dual function directly, since $D(\uu)$ is convex. Let $(\x^*,\y^*)$ and $\uu^*$ be the optimal primal solution and dual solution of problem (\ref{problem3}), respectively. Then they satisfy the KKT condition \cite{Bertsekas-book}. Since the strong duality holds, we have $f(\x^*)+\frac{1}{n}\phi(\y^*)=-D(\uu^*)$. Since $L_f(\x,\uu)$ is strongly convex over $\x$ for every $\uu\in \DS$, then $\x^*(\uu)$ is unique. Due to Danskin's theorem \cite{Bertsekas-book} we know that $d(\uu)$ is convex, differentiable and
\begin{eqnarray}
\nabla d(\uu)=-\left[\left(\A^T\x^*(\uu)/n\right)^T,(\B\x^*(\uu)+\b)^T,g_1(\x^*(\uu)),\cdots,g_m(\x^*(\uu))\right]^T.\label{gradient}
\end{eqnarray}
From Proposition 3.3 in \cite{Lu-2016-siam}, we have a Lipschitz smooth condition\footnote{\cite{Lu-2016-siam} studied the projected gradient method under the local Lipschitz smooth condition and the fast gradient method under the global Lipschitz smooth condition. The former condition is ensured by replacing Assumption \ref{assumption}.3 with $|g_i(\x)-g_i(\y)|\leq L_{g_i}\|\x-\y\|$ and a further assumption that $g_i$ is differentiable. In this paper, we use the global Lipschitz smooth condition over the dual feasible set $\DS$ for simplicity.} of $\|\nabla d(\uu)-\nabla d(\vv)\|\leq L\|\uu-\vv\|,\forall \uu,\vv\in \DS$, where
\begin{eqnarray}\label{L_constant}
L=\frac{\sqrt{m+1}\max\{\|[\A^T/n,\B]\|_2,\max_i L_{g_i}\}}{\mu}\sqrt{\|[\A^T/n,\B]\|_2^2+\sum_{i=1}^m L_{g_i}^2}.
\end{eqnarray}
Similarly, we can also prove a coordinatewise Lipschitz smooth condition in the following lemma, whose proof is given in Appendix B.
\begin{lemma}\label{coordinate_LS}
For any $\uu,\vv\in \DS$ and any $j$, assume that $\uu_i=\vv_i,\forall i\neq j$. Then $\|\nabla_jd(\uu)-\nabla_jd(\vv)\|\leq L_j\|\uu-\vv\|$, where
\begin{equation}\label{CL_constant}
L_j=\left\{
  \begin{array}{lcl}
    \frac{\|\A_j\|^2}{n^2\mu}, && j\leq n,\\
    \frac{\|\B_{j-n,:}\|^2}{\mu}, && n<j\leq n+p,\\
    \frac{L_{g_{j-n-p}}^2}{\mu}, && j>n+p.
  \end{array}
\right.
\end{equation}
\end{lemma}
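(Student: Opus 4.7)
The plan is to use the $\mu$-strong convexity of $L_f(\cdot,\uu)$ to translate a change of a single dual coordinate into a bound on $\|\x^*(\uu)-\x^*(\vv)\|$, and then plug that bound into the explicit coordinate-wise formula for $\nabla d$ from \eqref{gradient}.

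First, I would apply the standard two-point strong convexity argument to $L_f(\x,\uu)$ as a function of $\x$. Since $\x^*(\uu)$ and $\x^*(\vv)$ are the respective minimizers and $f$ is $\mu$-strongly convex (while the remaining terms of $L_f$ are linear in $\x$ for the linear-$g$ case, or strongly-convexity-preserving in general because the coefficients $\uu_{n+p+i}$ are nonnegative), I would add the two optimality-based inequalities
\begin{equation*}
L_f(\x^*(\vv),\uu)\geq L_f(\x^*(\uu),\uu)+\tfrac{\mu}{2}\|\x^*(\vv)-\x^*(\uu)\|^2,\qquad
L_f(\x^*(\uu),\vv)\geq L_f(\x^*(\vv),\vv)+\tfrac{\mu}{2}\|\x^*(\uu)-\x^*(\vv)\|^2,
\end{equation*}
to obtain
\begin{equation*}
\mu\|\x^*(\uu)-\x^*(\vv)\|^2\leq \bigl[L_f(\x^*(\vv),\uu)-L_f(\x^*(\vv),\vv)\bigr]-\bigl[L_f(\x^*(\uu),\uu)-L_f(\x^*(\uu),\vv)\bigr].
\end{equation*}

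Next, I would exploit the assumption $\uu_i=\vv_i$ for $i\neq j$. Since $L_f(\x,\cdot)$ is linear (or affine) in $\uu$, the bracketed differences depend only on coordinate $j$, and the right-hand side simplifies to $(\uu_j-\vv_j)\cdot\bigl(T_j(\x^*(\vv))-T_j(\x^*(\uu))\bigr)$ where $T_j$ denotes the appropriate linear form or $g_{j-n-p}$, depending on which block of $\uu$ the index $j$ belongs to. In the first two blocks, $T_j$ is a linear functional with coefficient vector of norm $\|\A_j\|/n$ or $\|\B_{j-n,:}\|$; in the third block, $T_j=g_{j-n-p}$ is Lipschitz with constant $L_{g_{j-n-p}}$ by Assumption \ref{assumption}.3 (convex functions with uniformly bounded subgradients are Lipschitz). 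Cauchy-Schwarz or the Lipschitz estimate then gives
\begin{equation*}
\mu\|\x^*(\uu)-\x^*(\vv)\|^2\leq |\uu_j-\vv_j|\cdot c_j\cdot \|\x^*(\uu)-\x^*(\vv)\|,
\end{equation*}
with $c_j\in\{\|\A_j\|/n,\|\B_{j-n,:}\|,L_{g_{j-n-p}}\}$. Dividing yields $\|\x^*(\uu)-\x^*(\vv)\|\leq (c_j/\mu)|\uu_j-\vv_j|=(c_j/\mu)\|\uu-\vv\|$.

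Finally I would substitute this bound into the expression \eqref{gradient} restricted to its $j$-th block. For $j\leq n$, $|\nabla_jd(\uu)-\nabla_jd(\vv)|=|\A_j^T(\x^*(\uu)-\x^*(\vv))|/n\leq (\|\A_j\|/n)\cdot\|\x^*(\uu)-\x^*(\vv)\|$, and combining with the previous display produces exactly $L_j=\|\A_j\|^2/(n^2\mu)$. The other two ranges of $j$ are handled identically with $\|\B_{j-n,:}\|$ and $L_{g_{j-n-p}}$ in place of $\|\A_j\|/n$. The only subtle point, which I expect to be the easiest obstacle, is justifying the Lipschitz bound on $g_i$ from bounded subgradients (a direct mean-value inequality for convex functions); once that is in place, all three cases follow the same template and the constants in \eqref{CL_constant} fall out immediately.
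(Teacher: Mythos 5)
Your proposal is correct, and its overall architecture matches the paper's: first control $\|\x^*(\uu)-\x^*(\vv)\|$ in terms of the perturbed coordinate, then compose with the explicit block formula for $\nabla_j d$ in (\ref{gradient}) (using $\|\A_j\|/n$, $\|\B_{j-n,:}\|$, or the $L_{g_{j-n-p}}$-Lipschitz bound) to square the constant. The one place you diverge is in how the stability estimate is obtained: the paper simply imports the bound $\|\x^*(\uu)-\x^*(\vv)\|\leq \frac{1}{\mu}\|\hat\A(\uu_{1:n+p}-\vv_{1:n+p})\|+\frac{1}{\mu}\sum_{i}L_{g_i}|\uu_{n+p+i}-\vv_{n+p+i}|$ from the proof of Theorem 3.1 in \cite{Lu-2016-siam} and specializes it to a single coordinate, whereas you rederive it from scratch via the symmetric two-point strong-convexity inequality applied to $L_f(\cdot,\uu)$ and $L_f(\cdot,\vv)$, exploiting affinity of $L_f(\x,\cdot)$ in $\uu$ so that the difference collapses to $(\uu_j-\vv_j)\bigl(T_j(\x^*(\vv))-T_j(\x^*(\uu))\bigr)$. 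Your version is self-contained and arguably cleaner for the single-coordinate case (it needs $\uu_{n+p+i}\geq 0$ only to guarantee strong convexity of $L_f(\cdot,\uu)$, which holds on $\DS$); the paper's version buys brevity by reusing an already-published full-vector perturbation bound. All constants come out identically, and the Lipschitz-from-bounded-subgradients step for $g_i$ that you flag is indeed immediate from Assumption \ref{assumption}.3.
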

An immediate consequence of Lemma \ref{coordinate_LS} is \cite[Lemma 1.2.3]{Nesterov-2004}
\begin{eqnarray}
|d(\uu)-d(\vv)-\<\nabla_j d(\vv),\uu_j-\vv_j\>|\leq \frac{L_j}{2}\|\uu_j-\vv_j\|^2\label{C_Lipschitz_smooth}
\end{eqnarray}
for all $\uu,\vv\in\DS$ satisfying $\uu_i=\vv_i,\forall i\neq j$.
\section{Accelerated Randomized Dual Coordinate Ascent}\label{sublinear_sec}

In this section, we use the standard accelerated randomized coordinate descent \cite{fercoq-2015-siam,xiao-2015-siam} to solve the dual problem (\ref{dual_problem}), which consists of the following steps at each iteration:
\begin{eqnarray}
&&\vv^k=\theta_k\z^k+(1-\theta_k)\uu^k,\notag\\
&&\mbox{select }i_k\mbox{ randomly with probability of }1/\hat n,\notag\\
&&\z_{i_k}^{k+1}=\argmin_{u} \frac{\hat n\theta_kL_{i_k}}{2}\|u-\z_{i_k}^k\|^2+\<\nabla_{i_k}d(\vv^k),u-\z_{i_k}^k\>+h_{i_k}(u),\notag\\
&&\z_j^{k+1}=\z_j^k,\forall j\neq i_k,\notag\\
&&\uu^{k+1}=\vv^k+\hat n\theta_k(\z^{k+1}-\z^k),\notag\\
&&\theta_{k+1}=\frac{\sqrt{\theta_k^4+4\theta_k^2}-\theta_k^2}{2}.\notag
\end{eqnarray}

At each iteration, accelerated randomized coordinate descent picks a random coordinate $i_k\in\{1,2,\cdots,\hat n\}$ and generates $\z^{k+1},\uu^{k+1}$ and $\vv^{k+1}$. Only the $i_k$-th coordinate of $\z^{k+1}$ is updated and the other coordinates remain unchanged. However, the above algorithm performs full-dimensional vector operations of $\vv^k$ and $\uu^k$, which makes the per-iteration cost higher than the simple non-accelerated coordinate descent. To avoid such operations, we can use a change of variables scheme proposed in \cite{lee2013,fercoq-2015-siam}. Specifically, introduce $\hat\uu^k$ initialized at $\hat\uu^0=0$ and the new algorithms consists of the following steps at each iteration:
\begin{eqnarray}
&&\mbox{select }i_k\mbox{ randomly with probability of }1/\hat n,\notag\\
&&\hat\z_{i_k}^{k+1}=\argmin_{u} \frac{\hat n\theta_kL_{i_k}}{2}\|u-\hat\z_{i_k}^k\|^2+\<\nabla_{i_k}d\left(\theta_k^2\hat\uu^k+\hat\z^k\right),u-\hat\z_{i_k}^k\>+h_{i_k}(u),\notag\\
&&\hat\z_j^{k+1}=\hat\z_j^k,\forall j\neq i_k,\notag\\
&&\hat\uu^{k+1}=\hat\uu^k-\frac{1-\hat n\theta_k}{\theta_k^2}(\hat\z^{k+1}-\hat\z^k),\notag\\
&&\theta_{k+1}=\frac{\sqrt{\theta_k^4+4\theta_k^2}-\theta_k^2}{2}.\notag
\end{eqnarray}
From Proposition 1 in \cite{fercoq-2015-siam}, we know that the above two algorithms are equivalent in the sense of $\z^k=\hat\z^k$, $\uu^{k+1}=\theta_k^2\hat\uu^{k+1}+\z^{k+1}$ and $\vv^k=\theta_k^2\hat\uu^k+\z^k$ given $\uu^0=\z^0=\hat\z^0$.

Next, we discuss the computation of $\nabla_{i_k} d(\vv^k)$, and we expect that it is best $\hat n$ times faster than the computation of $\nabla d(\vv^k)$. Consider the simple case of (\ref{function_g}). Define
\begin{eqnarray}
\S=\left[\A/n,\B^T,\J^T\right]\in\R^{ t\times\hat n}\qquad\mbox{and}\qquad\p=\left[\0^T,\b^T,\q^T\right]^T\in\R^{\hat n}.\label{def_Sq}
\end{eqnarray}
Then from the definitions in (\ref{cont28}) and (\ref{xu_solution}), we have $d(\uu)=-\min_{\x}(f(\x)+\<\S\uu,\x\>+\<\uu,\p\>)$. So we can prove $\x^*(\vv^k)=\nabla f^*(-\S\vv^k)$, $\nabla d(\vv^k)=-\S^T\x^*(\vv^k)-\p$ and $\nabla_i d(\vv^k)=-\S_i^T\x^*(\vv^k)-\p_i$. Thus if we keep a variable $\s_{\vv}^k\equiv\S\vv^k\in\R^t$ and update it without the full matrix-vector multiplication, $\x^*(\vv^k)$ and $\nabla_i d(\vv^k)$ can be efficiently computed. We describe the explicit update of $\s_{\vv}^k$ in Algorithm \ref{alg_arpcg}. At each iteration, only the $i_k$-th column of $\S$ is used, rather than the full matrix $\S$. So Algorithm \ref{alg_arpcg} only needs to deal with the $i_k$-th constraint, rather than all the constraints at each iteration.

The only thing left to do is to compute the gradient of $f^*$ and the proximal mapping of $\phi_i^*$. In machine learning, $f$ is often the regularizer and $\phi_i$ is the loss function. The most commonly used strongly convex regularizer is the $L_2$ regularization $f(\x)=\frac{\mu}{2}\|\x\|^2$. In this case, $f^*(\x)=\frac{1}{2\mu}\|\x\|^2$ and thus the computation time of $\nabla f^*(-\s_{\vv}^k)$ is $O(t)$. We can also use some non-smooth strongly convex regularizers in the form of $f(\x)=\frac{\mu}{2}\|\x\|^2+\sigma(\x)$. In this case, $\nabla f^*(\x)=\mbox{Prox}_{\sigma/\mu}(\x/\mu)$, which can be efficiently computed when the proximal mapping of $\sigma(\x)$ has closed form solution. On the other hand, when computing the proximal mapping of $\phi_i^*$ (or the proximal mapping of $\phi_i$ since $u=\mbox{Prox}_{\phi_i}(u)+\mbox{Prox}_{\phi_i^*}(u)$), we only need to solve an optimization problem with only one dimension, which can be efficiently done, e.g., by the nonsmooth Newton or quasi Newton method \cite{Lewis-2013-newton}. For many machine learning problems, the proximal mapping of $\phi_i^*$ can be computed efficiently (see, e.g., \cite{zhang-2013-jmlr,zhang-2015-MP}). Thus, Algorithm \ref{alg_arpcg} needs about $O(t)$ time at each iteration while DFGA and ADFGA need $O(t\hat n)$ time.

\begin{algorithm}
   \caption{ARDCA}
   \label{alg_arpcg}
\begin{algorithmic}
   \STATE Input $\uu^0\in \DS$, $K_0$, $K$
   \STATE Initialize $\z^0=\uu^0$, $\hat\uu^0=\0$, $\s_{\z}^0=\S\z^0$, $\s_{\hat\uu}^0=0$, $\theta_0=\frac{1}{\hat n}$.
   \FOR{$k=0, 1, 2, 3, \cdots, K$}
   \STATE $\s_{\vv}^k=\theta_k^2\s_{\hat\uu}^k+\s_{\z}^k$,
   \STATE $\x^*(\vv^k)=\nabla f^*(-\s_{\vv}^k)$,
   \STATE select $i_k$ randomly with probability of $1/\hat n$,
   \STATE $\nabla_{i_k} d(\vv^k)=-\S_{i_k}^T\x^*(\vv^k)-\p_{i_k}$,
   \STATE $\z_{i_k}^{k+1}=\argmin_{u} \hat n\theta_kL_{i_k}\|u-\z_{i_k}^k\|^2+\<\nabla_{i_k}d(\vv^k),u-\z_{i_k}^k\>+h_{i_k}(u)$,
   \STATE $\hat\uu_{i_k}^{k+1}=\hat\uu_{i_k}^k-\frac{1-\hat n\theta_k}{\theta_k^2}(\z_{i_k}^{k+1}-\z_{i_k}^k)$,
   \STATE $\z_j^{k+1}=\z_j^k$ and $\hat\uu_j^{k+1}=\hat\uu_j^k$ for $j\neq i_k$,
   \STATE $\s_{\z}^{k+1}=\s_{\z}^k+\S_{i_k}(\z_{i_k}^{k+1}-\z_{i_k}^{k})$,
   \STATE $\s_{\hat\uu}^{k+1}=\s_{\hat\uu}^k+\S_{i_k}(\hat\uu_{i_k}^{k+1}-\hat\uu_{i_k}^{k})$,
   \STATE $\theta_{k+1}=\frac{\sqrt{\theta_k^4+4\theta_k^2}-\theta_k^2}{2}$,
   \ENDFOR
   \STATE Output $\hat\x^K=\frac{\sum_{k=K_0}^K\frac{\x^*(\vv^k)}{\theta_k}}{\sum_{k=K_0}^K\frac{1}{\theta_k}}$ and $\uu^{K+1}=\theta_K^2\hat\uu^{K+1}+\z^{K+1}$.
\end{algorithmic}
\end{algorithm}
\begin{remark}
A main difference between Algorithm \ref{alg_arpcg} and the original accelerated randomized coordinate descent is that Algorithm \ref{alg_arpcg} uses the step-size of $\frac{1}{\hat n\theta_kL_{i_k}}$ when computing $\z_{i_k}^{k+1}$ while the original algorithm uses a larger one of $\frac{2}{\hat n\theta_kL_{i_k}}$, which makes the original algorithm faster than Algorithm \ref{alg_arpcg} in practice. The reason of the smaller step-size is to fit the proof. Specifically, it allows us to keep an additional term $\frac{\hat n^2\theta_k^2}{2}\|\z^{k+1}-\z^k\|_L^2$ in Lemma \ref{lemma3}, which is crucial in the proof of Lemma \ref{lemma2}. Otherwise, we may only bound $\|\E_{\xi_K}[\cdot]\|_L^*$ for the constraint functions, rather than $\E_{\xi_K}[\|\cdot\|_L^*]$ in Lemma \ref{lemma2}. The former is less interesting since the expectation is inside the norm.
\end{remark}

In Algorithm \ref{alg_arpcg}, we output the average of $\x^*(\vv^k)$ from some $K_0$ to $K$\footnote{We leave the efficient computation of the average in Appendix A.}, rather than $\x^*(\uu^{K+1})$. This little change allows us to give a faster convergence rate in the primal space. We average from $K_0$, rather than from the first iteration, since the first few iterations often produce poor solutions. We now state our main result on the convergence rate of the primal solutions for ARDCA. Let
\begin{eqnarray}
\xi_k=\{i_0,i_1,\cdots,i_k\}\notag
\end{eqnarray}
denote the random sequence, $\E_{\xi_k}$ be the expectation with respect to $\xi_k$ and $\E_{i_k|\xi_{k-1}}$ be the conditional expectation with respect to $i_k$ conditioned on $\xi_{k-1}$, then we have the following theorem.
\begin{theorem}\label{main_theorem}
Suppose Assumption \ref{assumption} holds. Let $K_0\leq\left\lfloor\frac{K}{\upsilon(1+1/\hat n)}+1\right\rfloor$ with any $\upsilon>1$. Then for Algorithm \ref{alg_arpcg}, we have
\begin{eqnarray}
\begin{aligned}
&\left|\E_{\xi_K}\hspace*{-0.05cm}[F(\hat\x^K)]\hspace*{-0.09cm}-\hspace*{-0.09cm}F(\x^*)\right|\hspace*{-0.09cm}\leq\hspace*{-0.09cm}\frac{9\hat n^2\hspace*{-0.09cm}\left( (1\hspace*{-0.09cm}-\hspace*{-0.09cm}\theta_0)\hspace*{-0.09cm}\left(D(\uu^0)\hspace*{-0.09cm}-\hspace*{-0.09cm} D(\uu^*)\right)\hspace*{-0.09cm}+\hspace*{-0.09cm}\|\uu^0\hspace*{-0.09cm}-\hspace*{-0.09cm}\uu^*\|_L^2\hspace*{-0.09cm}+\hspace*{-0.09cm}\|\uu^*\|_L^2\hspace*{-0.09cm}+\hspace*{-0.09cm}M^2\sum_{i=1}^nL_i\right)}{(K^2/4+\hat nK)\left(1-1/\upsilon\right)},\notag\\
&\E_{\xi_K}\hspace*{-0.09cm}\left[\left\| \left[\hspace*{-0.12cm}
  \begin{array}{c}
    \B\hat\x^K+\b\\
    \max\left\{0,g(\hat\x^K)\right\}
  \end{array}
\right]\right\|_L^*\right] \hspace*{-0.09cm}\leq\hspace*{-0.09cm} \frac{7\hat n^2}{(K^2\hspace*{-0.05cm}/\hspace*{-0.05cm}4\hspace*{-0.09cm}+\hspace*{-0.09cm}\hat nK)\hspace*{-0.09cm}\left(1\hspace*{-0.09cm}-\hspace*{-0.09cm}1/\upsilon\right)}\sqrt{ \hspace*{-0.05cm}(1\hspace*{-0.09cm}-\hspace*{-0.09cm}\theta_0)\hspace*{-0.09cm}\left(D(\uu^0)\hspace*{-0.09cm}-\hspace*{-0.09cm} D(\uu^*)\right)\hspace*{-0.09cm}+\hspace*{-0.09cm}\|\uu^0\hspace*{-0.09cm}-\hspace*{-0.09cm}\uu^*\|_L^2}.\notag
\end{aligned}
\end{eqnarray}
\end{theorem}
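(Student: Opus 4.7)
The plan is to adapt the Nesterov--Fercoq--Richt\'arik estimate-sequence analysis of accelerated randomized coordinate descent applied to the dual problem (\ref{dual_problem}), but in its \emph{variational} form parameterized by an arbitrary reference point $\uu\in\DS$, then telescope with weights $1/\theta_k^2$, and finally translate the resulting dual linearization into a primal Lagrangian via an algebraic identity. Averaging the iterates $\x^*(\vv^k)$ with weights $1/\theta_k$ then produces a Lagrangian-gap bound, and the two displays in the theorem will follow by specializing $\uu$ twice: to $\uu^*$ for the objective bound and to a perturbation $\uu^*+\rho\sigma$ for the feasibility residual.

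First, using Lemma~\ref{coordinate_LS}, the definition of the proximal coordinate update in Algorithm~\ref{alg_arpcg}, and convexity of $d$ and $h$, I would derive the one-step bound
\[
\tfrac{1}{\theta_k^2}\E_{i_k|\xi_{k-1}}[D(\uu^{k+1})] + \tfrac{\hat n^2}{2}\E_{i_k|\xi_{k-1}}\|\z^{k+1}-\uu\|_L^2 \le \tfrac{1-\theta_k}{\theta_k^2}D(\uu^k) + \tfrac{1}{\theta_k}\mathcal{Q}(\uu,\vv^k) + \tfrac{\hat n^2}{2}\|\z^k-\uu\|_L^2,
\]
valid for every $\uu\in\DS$, with $\mathcal{Q}(\uu,\vv)=d(\vv)+\<\nabla d(\vv),\uu-\vv\>+h(\uu)$. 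The smaller step size $1/(\hat n\theta_k L_{i_k})$ of Algorithm~\ref{alg_arpcg} (as opposed to the usual $2/(\hat n\theta_k L_{i_k})$) leaves an extra $\tfrac{\hat n^2\theta_k^2}{2}\|\z^{k+1}-\z^k\|_L^2$ slack on the LHS when (\ref{C_Lipschitz_smooth}) is applied, and this slack is decisive later. Using $(1-\theta_{k+1})/\theta_{k+1}^2=1/\theta_k^2$, $\theta_0=1/\hat n$, and telescoping, I would obtain a bound on $\sum_{k=0}^K \E_{\xi_k}[\mathcal{Q}(\uu,\vv^k)]/\theta_k$ in terms of $(\hat n^2-\hat n)D(\uu^0)$ and $\tfrac{\hat n^2}{2}\|\uu^0-\uu\|_L^2$ minus the nonnegative ``leftover'' $\tfrac{1-\theta_{K+1}}{\theta_{K+1}^2}\E D(\uu^{K+1})+\tfrac{\hat n^2}{2}\E\|\z^{K+1}-\uu\|_L^2$.

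The bridge to the primal is the Lagrangian identity $\mathcal{Q}(\uu,\vv^k) = -L_F(\x^*(\vv^k),\y^*(\uu),\uu)$ with $\y^*(\uu)\in\partial\phi^*(\uu_{1:n})$, which I would verify from $d(\vv^k)-\<\nabla d(\vv^k),\vv^k\>=-f(\x^*(\vv^k))$, the explicit formula (\ref{gradient}), and the definition of $h$ in (\ref{cont28}). Because (\ref{function_g}) makes every $\x$-term of $L_F$ other than $f$ linear, $L_F(\cdot,\y,\uu)$ is convex in $\x$, so Jensen with weights $1/\theta_k$ over $k\in[K_0,K]$ yields $L_F(\hat\x^K,\y^*(\uu),\uu)\le \tfrac{1}{W_K}\sum_{k=K_0}^K L_F(\x^*(\vv^k),\y^*(\uu),\uu)/\theta_k$. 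I would control the discarded prefix $k<K_0$ through weak duality, $L_F(\x^*(\vv^k),\y^*(\uu),\uu)\ge -D(\uu)$; the hypothesis $K_0\le \lfloor K/(\upsilon(1+1/\hat n))+1\rfloor$ together with $1/\theta_k\ge k/2+\hat n$ then guarantees $W_K\ge(K^2/4+\hat nK)(1-1/\upsilon)$. Combining produces the master Lagrangian-gap estimate
\[
\E_{\xi_K}[L_F(\hat\x^K,\y^*(\uu),\uu)] + D(\uu^*) \le \tfrac{\hat n^2\bigl((1-\theta_0)(D(\uu^0)-D(\uu^*)) + \tfrac12\|\uu^0-\uu\|_L^2\bigr)}{W_K(1-1/\upsilon)} - \tfrac{\hat n^2}{2W_K}\E_{\xi_K}\|\z^{K+1}-\uu\|_L^2.
\]

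To finish, I would specialize $\uu$ twice. Taking $\uu=\uu^*$ and using Assumption~\ref{assumption}.2 together with the KKT relation $\y^*(\uu^*)=\A^T\x^*$, I would convert $L_F(\hat\x^K,\y^*(\uu^*),\uu^*)$ into $F(\hat\x^K)$; a Young inequality absorbs the $\tfrac{M}{n}\|\A^T\hat\x^K-\y^*(\uu^*)\|_1$ Lipschitz slack against the surviving $\E\|\z^{K+1}-\uu^*\|_L^2$ term and produces the $M^2\sum_{i=1}^n L_i$ contribution in the theorem. For the feasibility residual I would take $\uu=\uu^*+\rho\sigma$ with $\sigma\in\DS$, $\|\sigma\|_L=1$, chosen to align in the dual $L$-norm with $[\B\hat\x^K+\b;\max\{0,g(\hat\x^K)\}]$; optimizing the resulting quadratic in $\rho$ yields the claimed square-root rate. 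The main obstacle lies exactly here: since the aligning $\sigma$ must depend on the random $\hat\x^K$, one has to choose it as a measurable function of $\xi_K$ and move the supremum inside the expectation, which is only legitimate because the master estimate holds \emph{uniformly} over every deterministic $\uu$. That uniformity in turn rests entirely on the extra $\tfrac{\hat n^2\theta_k^2}{2}\|\z^{k+1}-\z^k\|_L^2$ slack preserved by the smaller step size---with the standard larger step size one would only obtain a bound on $\|\E[\cdot]\|_L^*$, which is strictly weaker than the $\E[\|\cdot\|_L^*]$ claim of the theorem.
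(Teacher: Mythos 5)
Your dual-space recursion, the retention of the $\tfrac{\hat n^2\theta_k^2}{2}\|\z^{k+1}-\z^k\|_L^2$ slack, the $1/\theta_k^2$-telescoping, the weighted Jensen average over $k\in[K_0,K]$, and the handling of the discarded prefix all match the paper's Lemmas \ref{lemma3}--\ref{lemma0}. The gap is in how you close the argument, and it is genuine. For the feasibility residual you set $\uu=\uu^*+\rho\sigma$ with $\sigma$ aligned to $[\B\hat\x^K+\b;\max\{0,g(\hat\x^K)\}]$ and then claim the supremum may be moved inside the expectation ``because the master estimate holds uniformly over every deterministic $\uu$.'' That inference is invalid: a bound of the form $\E_{\xi_K}[G(\uu)]\leq B(\uu)$ for every \emph{deterministic} $\uu$ yields only $\sup_{\sigma}\E_{\xi_K}[\<r(\hat\x^K),\sigma\>]=\|\E_{\xi_K}[r(\hat\x^K)]\|_L^*$, i.e.\ exactly the weaker ``expectation inside the norm'' conclusion that the theorem is designed to avoid. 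Plugging in a $\sigma$ that is a measurable function of $\xi_K$ requires controlling $\E_{\xi_K}[\sup_\uu(G(\uu)-B(\uu))]$, which does not follow from pointwise bounds. The paper's actual mechanism is entirely different (Lemma \ref{lemma2}): it forms the cumulative deviation $\s^K=\sum_{k}\g^k$ of the coordinate updates from their conditional means, uses the orthogonality $\E_{\xi_k}[\|\s^k\|_L^2]=\E[\|\g^k\|_L^2]+\E[\|\s^{k-1}\|_L^2]$, and pays for the accumulated variance with precisely the slack $\sum_k\E[\|\z^{k+1}-\z^k\|_L^2]$ from (\ref{cont1}). That is how one obtains a bound on the second moment $\E_{\xi_K}[(\|\widetilde\triangle(\hat\x^K,\hat\y^K)\|_L^*)^2]$; the slack term you preserved is necessary for this, but it does not by itself license the sup--expectation exchange you invoke.

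A second, related problem is your choice of the auxiliary primal variable $\y^*(\uu)\in\partial\phi^*(\uu_{1:n})$ fixed by the reference point. With $\uu=\uu^*$ this makes the Lipschitz slack equal to $\tfrac{M}{n}\|\A^T\hat\x^K-\A^T\x^*\|_1$, and the only handle on $\hat\x^K-\x^*$ available from your master estimate is strong convexity of $L_F$ in $\x$, which gives $\E[\|\hat\x^K-\x^*\|^2]\leq O(1/K^2)$ and hence $\E[\|\hat\x^K-\x^*\|]\leq O(1/K)$ only; there is no inequality relating this primal distance to the dual quantity $\E[\|\z^{K+1}-\uu^*\|_L^2]$ that you propose to absorb it into. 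The resulting objective bound degrades to $O(1/K)$. The paper avoids this by building $\hat\y^K$ from the algorithm's own proximal subgradients $\y^k\in\partial h(\widetilde\z^k)$ (see (\ref{def_y}) and Lemma \ref{pd_relation}), so that $(\A^T\hat\x^K-\hat\y^K)/n$ becomes one more component of the residual vector $\widetilde\triangle(\hat\x^K,\hat\y^K)$ controlled at the $O(1/K^2)$ rate by the martingale argument, and then converts $\phi(\hat\y^K)$ to $\phi(\A^T\hat\x^K)$ via (\ref{cont7}). Both defects trace to the same missing ingredient: a second-moment bound on the residual itself, rather than a bound on its expectation.
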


Now we compare the convergence rate of the primal solutions with that of the dual solutions. For the dual problem (\ref{dual_problem}), \cite{xiao-2015-siam} proved the $O\left(\frac{\hat n^2}{K^2}\right)$ convergence rate, which is described in the following proposition.
\begin{proposition}\cite{xiao-2015-siam}
Suppose Assumption \ref{assumption} holds. Then for Algorithm \ref{alg_arpcg}, we have
\begin{eqnarray}
\E_{\xi_K}\hspace*{-0.07cm}[ D(\uu^{K+1})]\hspace*{-0.07cm}-\hspace*{-0.07cm} D(\uu^*)\hspace*{-0.07cm}\leq\hspace*{-0.07cm} \left(\hspace*{-0.07cm}\frac{2\hat n}{2\hat n\hspace*{-0.07cm}+\hspace*{-0.07cm}K\hat n/\hspace*{-0.07cm}\sqrt{\hat n^2\hspace*{-0.07cm}-\hspace*{-0.07cm}1}}\hspace*{-0.07cm}\right)^2\hspace*{-0.07cm}\left(\hspace*{-0.07cm} D(\uu^0)\hspace*{-0.07cm}-\hspace*{-0.07cm} D(\uu^*)\hspace*{-0.07cm}+\hspace*{-0.07cm}\frac{\hat n^2}{2(\hat n^2\hspace*{-0.07cm}-\hspace*{-0.07cm}1)}\|\uu^0\hspace*{-0.07cm}-\hspace*{-0.07cm}\uu^*\|_L^2\hspace*{-0.07cm}\right).\label{dual_rate}
\end{eqnarray}
\end{proposition}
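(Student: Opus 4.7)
The plan is to follow the standard accelerated randomized coordinate descent analysis of \cite{xiao-2015-siam,fercoq-2015-siam}, leveraging the coordinatewise Lipschitz smoothness from Lemma \ref{coordinate_LS} and the defining recursion $\theta_{k+1}^2 = (1-\theta_{k+1})\theta_k^2$ of the sequence $\{\theta_k\}$ (equivalent to the stated closed-form update). The argument constructs a Lyapunov function that decays at rate $1/\theta_k^2$.

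First I would establish a per-iteration descent inequality. Since $\uu^{k+1}$ differs from $\vv^k$ only in the $i_k$-th coordinate by $\hat n\theta_k(\z^{k+1} - \z^k)$, (\ref{C_Lipschitz_smooth}) gives
\[
d(\uu^{k+1}) \leq d(\vv^k) + \<\nabla_{i_k} d(\vv^k), \uu^{k+1}_{i_k} - \vv^k_{i_k}\> + \frac{L_{i_k}}{2}\|\uu^{k+1}_{i_k} - \vv^k_{i_k}\|^2.
\]
Adding $h(\uu^{k+1})$, using the first-order optimality of the proximal step for $\z^{k+1}_{i_k}$ (whose step-size $1/(\hat n\theta_k L_{i_k})$ yields the extra strongly-convex margin noted in the Remark), invoking convexity of $d$ at $\vv^k = \theta_k\z^k+(1-\theta_k)\uu^k$, and taking $\E_{i_k|\xi_{k-1}}$ (which converts $\z^{k+1}$-dependent quantities into $\frac{1}{\hat n}$-weighted mixtures of $\z^{k+1}$ and $\z^k$ on separable terms), I obtain an inequality of the form
\[
\E_{i_k|\xi_{k-1}}[D(\uu^{k+1})] + \alpha_k\E_{i_k|\xi_{k-1}}[\|\z^{k+1}-\uu^*\|_L^2] \leq \theta_k D(\uu^*) + (1-\theta_k)D(\uu^k) + \alpha_k\|\z^k-\uu^*\|_L^2
\]
with $\alpha_k \propto \theta_k^2$. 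Dividing by $\theta_k^2$, using the identity $(1-\theta_{k+1})/\theta_{k+1}^2 = 1/\theta_k^2$ to telescope the Lyapunov function $\Psi_k := \frac{1-\theta_{k-1}}{\theta_{k-1}^2}(D(\uu^k)-D(\uu^*)) + c\|\z^k-\uu^*\|_L^2$, and exploiting $\z^0=\uu^0$, $\theta_0=1/\hat n$, yields $\E_{\xi_K}[\Psi_{K+1}]\leq\Psi_0$ by the tower property, which after factoring becomes
\[
\E_{\xi_K}[D(\uu^{K+1})-D(\uu^*)] \leq \frac{\theta_K^2}{1-\theta_K}\Bigl((\hat n^2-\hat n)(D(\uu^0)-D(\uu^*)) + c\|\uu^0-\uu^*\|_L^2\Bigr).
\]
The stated form then follows from an upper bound $\theta_K \leq 2\hat n/(2\hat n + K\hat n/\sqrt{\hat n^2-1})$, derived from the recursion $(1/\theta_{k+1})^2 = (1/\theta_k)^2 + 1/\theta_{k+1}$ together with $\theta_0 = 1/\hat n$, and from factoring a common constant out of the bracket to produce the stated $\frac{\hat n^2}{2(\hat n^2-1)}$ weight on $\|\uu^0-\uu^*\|_L^2$.

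The main technical obstacle is the delicate bookkeeping for the precise constants. Obtaining the qualitative $O(\hat n^2/K^2)$ rate is routine, but matching the stated closed form requires: (i) choosing the Lyapunov weight $c$ in exact coordination with the coefficient produced by the proximal-step argument; (ii) the sharp upper bound $\theta_K \leq 2\hat n/(2\hat n+K\hat n/\sqrt{\hat n^2-1})$, which needs more than the crude $\theta_K\leq 2/(K+2\hat n)$ derivable from $1/\theta_{k+1}-1/\theta_k\geq 1/2$, and instead uses the sharper identity $1/\theta_{k+1}-1/\theta_k = 1/(1/\theta_{k+1}+1/\theta_k)$ combined with monotonicity of $1/\theta_k$; and (iii) careful handling of the $(1-\theta_0)$ prefactor on the initial dual-error term, which is strictly less than $1$ and must be tracked through the final algebraic rearrangement to produce precisely the stated form.
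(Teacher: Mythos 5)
First, note that the paper does not prove this proposition itself: it is imported verbatim from \cite{xiao-2015-siam}, and the closest in-paper analogue is inequality (\ref{cont34}) of Lemma \ref{lemma0}, which is derived from the one-step recursion of Lemma \ref{lemma3} (Appendix D). Your overall strategy --- coordinatewise descent from (\ref{C_Lipschitz_smooth}), prox optimality with the conservative step-size, convexity of $d$ at $\vv^k=\theta_k\z^k+(1-\theta_k)\uu^k$, conditional expectation over $i_k$, division by $\theta_k^2$ and telescoping via $\frac{1-\theta_{k+1}}{\theta_{k+1}^2}=\frac{1}{\theta_k^2}$ --- is exactly the estimate-sequence argument used both in \cite{xiao-2015-siam} and in the paper's own Lemmas \ref{lemma3} and \ref{lemma0}, so the route is the right one.

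There is, however, one concrete gap in your sketch: the treatment of the nonsmooth separable term $h(\uu^{k+1})$. Since $\uu^{k+1}=\vv^k+\hat n\theta_k(\z^{k+1}-\z^k)$ is an \emph{extrapolated} point (a step of length $\hat n\theta_k$ beyond the prox output, not the prox output itself), the first-order optimality condition at $\z_{i_k}^{k+1}$ gives you no direct control of $h_{i_k}(\uu_{i_k}^{k+1})$, and "adding $h(\uu^{k+1})$" to the smooth descent inequality does not produce the claimed recursion. The standard fix --- which the paper sets up explicitly before Lemma \ref{lemma3} --- is the convex-combination representation $\uu^{k+1}=\sum_{t=0}^{k+1}\alpha_{k+1,t}\z^t$ with the Fercoq--Richt\'arik weights, replacing $h(\uu^{k+1})$ by the surrogate $H^{k+1}=\sum_{t=0}^{k+1}\alpha_{k+1,t}h(\z^t)\geq h(\uu^{k+1})$, which does satisfy $\E_{i_k|\xi_{k-1}}[H^{k+1}]=(1-\theta_k)H^k+\theta_k\sum_i h_i(\widetilde\z_i^k)$ and hence telescopes. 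Your sketch needs this device (or an equivalent one) to close; without it the step from the coordinate prox update to the Lyapunov recursion fails whenever $h\not\equiv 0$. A secondary caveat: the specific constants $\hat n/\sqrt{\hat n^2-1}$ and $\frac{\hat n^2}{2(\hat n^2-1)}$ in the displayed bound come from APCG's own parameter sequence (effectively $\gamma_0=\hat n^2/(\hat n^2-1)$), whereas this paper's sequence has $\frac{1}{\theta_{-1}^2}=\hat n^2-\hat n$; running your argument with the paper's $\theta_k$ yields the form of (\ref{cont34}), i.e.\ $\hat n^2\theta_K^2\bigl((1-\theta_0)(D(\uu^0)-D(\uu^*))+\|\uu^0-\uu^*\|_L^2\bigr)$, and matching the quoted constants exactly requires adopting the cited paper's parameters rather than only the sharper bound on $\theta_K$ you describe.
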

Thus we can see that Algorithm \ref{alg_arpcg} needs $O\left(\frac{\hat n}{\sqrt{\epsilon}}\right)$ iterations to achieve an $\epsilon$-optimal primal solution and dual solution, i.e., the iteration complexity of the primal solutions has the same order of magnitude as that of the dual solutions for ARDCA.

To make a better comparison with the existing result, we describe the relation of the primal objective and constraint functions with the dual objective without averaging the primal solutions in the following proposition, which only applies to problem (\ref{problem1}). Combing with (\ref{dual_rate}), we can immediately get the $O\left( \frac{\hat n}{K+\hat n} \right)$ convergence rate in the primal space, which verifies that averaging the primal solutions helps to improve the convergence rate in the primal space.
\begin{proposition}\cite{Lu-2016-siam}
Suppose Assumption \ref{assumption} holds for problem (\ref{problem1}). Then for Algorithm \ref{alg_arpcg}, we have
\begin{eqnarray}
\begin{aligned}
&\E_{\xi_K}\hspace*{-0.04cm}[f\hspace*{-0.04cm}(\hspace*{-0.04cm}\x^*(\hspace*{-0.04cm}\uu^{K\hspace*{-0.04cm}+\hspace*{-0.04cm}1}\hspace*{-0.04cm})\hspace*{-0.04cm})]\hspace*{-0.11cm}-\hspace*{-0.11cm}f(\hspace*{-0.04cm}\x^*\hspace*{-0.04cm})\hspace*{-0.11cm}\leq\hspace*{-0.11cm} \left(\hspace*{-0.13cm} \|\hspace*{-0.04cm}\uu^{K\hspace*{-0.04cm}+\hspace*{-0.04cm}1}\hspace*{-0.04cm}\|_{\infty}\hspace*{-0.07cm}\sqrt{\hspace*{-0.04cm}2L\hat n}\hspace*{-0.11cm}+\hspace*{-0.11cm} \sqrt{\E_{\xi_K}\hspace*{-0.04cm}[D(\hspace*{-0.04cm}\uu^{K\hspace*{-0.04cm}+\hspace*{-0.04cm}1}\hspace*{-0.04cm})]\hspace*{-0.11cm}-\hspace*{-0.11cm} D(\hspace*{-0.04cm}\uu^*\hspace*{-0.04cm})}\hspace*{-0.04cm} \right)\hspace*{-0.13cm}\sqrt{\E_{\xi_K}\hspace*{-0.04cm}[D(\hspace*{-0.04cm}\uu^{K\hspace*{-0.04cm}+\hspace*{-0.04cm}1}\hspace*{-0.04cm})]\hspace*{-0.11cm}-\hspace*{-0.11cm} D(\hspace*{-0.04cm}\uu^*\hspace*{-0.04cm})},\notag\\
&\E_{\xi_K}[f(\x^*(\uu^{K+1}))]-f(\x^*)\geq -\|\uu^*\|\sqrt{2L\left(\E_{\xi_K}[D(\uu^{K+1})]- D(\uu^*)\right)},\\
&\E_{\xi_K}\left[\left\| \left[
  \begin{array}{c}
    \B\x^*(\uu^{K+1})+\b\\
    \max\left\{0,g(\x^*(\uu^{K+1}))\right\}
  \end{array}
\right]\right\|_L^*\right] \leq \sqrt{2L\left(\E_{\xi_K}[D(\uu^{K+1})]- D(\uu^*)\right)}.\notag
\end{aligned}
\end{eqnarray}
\end{proposition}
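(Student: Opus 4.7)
My plan is to derive all three bounds from the single identity, valid for any dual-feasible $\uu$,
\begin{equation}\notag
f(\x^*(\uu)) - f(\x^*) \;=\; -\bigl(d(\uu) - d(\uu^*)\bigr) \,+\, \langle \uu,\nabla d(\uu)\rangle,
\end{equation}
which follows by plugging $\nabla d(\uu) = -[(\B\x^*(\uu)+\b)^T, g_1(\x^*(\uu)),\ldots,g_m(\x^*(\uu))]^T$ from (\ref{gradient}) into $L_f(\x^*(\uu),\uu) = f(\x^*(\uu)) + \langle \uu,-\nabla d(\uu)\rangle$ and using strong duality $f(\x^*) = -d(\uu^*)$. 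The workhorse estimate is the co-coercivity of $d$: since $d$ is convex and $L$-smooth on $\DS$, and since KKT at $\uu^*$ gives $-\nabla d(\uu^*)\in\partial h(\uu^*)$, a quick check shows $\langle \nabla d(\uu^*),\uu-\uu^*\rangle \ge 0$ for every $\uu\in\DS$, so the standard Baillon-Haddad inequality yields $\|\nabla d(\uu)-\nabla d(\uu^*)\|^2 \le 2L\bigl(d(\uu)-d(\uu^*)\bigr)$.

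For the lower bound I will apply convexity of $d$, $\langle \nabla d(\uu),\uu-\uu^*\rangle \ge d(\uu)-d(\uu^*)$, rearrange to
$\langle \uu,\nabla d(\uu)\rangle \ge (d(\uu)-d(\uu^*)) + \langle \uu^*,\nabla d(\uu)\rangle$,
and combine with the identity to get $f(\x^*(\uu))-f(\x^*) \ge \langle \uu^*,\nabla d(\uu)-\nabla d(\uu^*)\rangle$, where I have used $\langle \uu^*,\nabla d(\uu^*)\rangle=0$ (KKT: equality part is $0$ and complementary slackness kills the inequality part). Cauchy--Schwarz and co-coercivity then give $-\|\uu^*\|\sqrt{2L(D(\uu)-D(\uu^*))}$, and Jensen pushes $D(\uu)-D(\uu^*)$ under $\E_{\xi_K}$ inside the square root.

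For the upper bound I will split $\langle \uu,\nabla d(\uu)\rangle = \langle \uu,\nabla d(\uu)-\nabla d(\uu^*)\rangle + \langle \uu,\nabla d(\uu^*)\rangle$. The first piece is controlled by Hölder and $\|\cdot\|_1\le\sqrt{\hat n}\|\cdot\|_2$, and then co-coercivity to produce $\|\uu\|_\infty\sqrt{2L\hat n}\sqrt{D(\uu)-D(\uu^*)}$. For the second piece I use $\langle \uu^*,\nabla d(\uu^*)\rangle=0$ to write $\langle \uu,\nabla d(\uu^*)\rangle = \langle \uu-\uu^*,\nabla d(\uu^*)\rangle$, and then convexity of $d$ bounds this by $d(\uu)-d(\uu^*)=D(\uu)-D(\uu^*)$; this is exactly what furnishes the extra $\sqrt{D-D^*}\cdot\sqrt{D-D^*}=(D-D^*)$ summand inside the parentheses. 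Adding the $-(D-D^*)$ from the identity leaves the stated upper bound after taking $\E_{\xi_K}$ and applying Jensen.

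For the constraint violation I note that $\nabla d(\uu)-\nabla d(\uu^*) = -[\B\x^*(\uu)+\b;\, g(\x^*(\uu))-g(\x^*)]$. The equality coordinate is exactly the equality violation, while for inequality coordinates the pointwise inequality $|\max\{0,g_i(\x^*(\uu))\}|\le |g_i(\x^*(\uu))-g_i(\x^*)|$ (which holds because $g_i(\x^*)\le 0$, verified by the two sign cases of $g_i(\x^*(\uu))$) shows the corresponding entry of the violation vector is dominated in absolute value. Squaring coordinatewise and weighting by $1/L_i$ gives $\|[\B\x^*(\uu)+\b;\, \max\{0,g(\x^*(\uu))\}]\|_L^{*\,2}\le \|\nabla d(\uu)-\nabla d(\uu^*)\|_L^{*\,2}$. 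A coordinatewise version of co-coercivity (which one obtains exactly as in the scalar proof, using the $L_i$-smoothness from Lemma~\ref{coordinate_LS}, or simply the global estimate with the weighted norm) bounds the right-hand side by $2L(D(\uu)-D(\uu^*))$, and Jensen finishes it. The main subtlety will be justifying the weighted-norm version of co-coercivity and keeping the expectation placement consistent through each application of the concavity of $\sqrt{\cdot}$.
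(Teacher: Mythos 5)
Your overall route---the identity $f(\x^*(\uu))-f(\x^*)=-(d(\uu)-d(\uu^*))+\<\uu,\nabla d(\uu)\>$, the optimality relation $\<\nabla d(\uu^*),\uu-\uu^*\>\geq 0$ on $\DS$, complementary slackness to kill $\<\uu^*,\nabla d(\uu^*)\>$, and co-coercivity to convert gradient differences into dual gaps---is exactly the mechanism sketched in Section \ref{sec:simple_case} (adapted from \cite{Lu-2016-siam}), extended correctly to the inequality-constrained case, and your derivations of the two objective bounds are essentially sound. Two technicalities that the statement itself already glosses over carry into your argument: Baillon--Haddad evaluates $d$ at an auxiliary point of the form $\uu-\frac{1}{L}(\nabla d(\uu)-\nabla d(\uu^*))$, which can leave $\DS$, the set on which the $L$-smoothness of $d$ is asserted; and $\|\uu^{K+1}\|_{\infty}$ is random, so $\E_{\xi_K}[\|\uu^{K+1}\|_{\infty}\sqrt{D(\uu^{K+1})-D(\uu^*)}]$ does not factor by Jensen alone.

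The genuine gap is in the constraint bound. Your pointwise reduction to $\|\nabla d(\uu)-\nabla d(\uu^*)\|_L^*$ is correct, but neither of the two justifications you offer for $\left(\|\nabla d(\uu)-\nabla d(\uu^*)\|_L^*\right)^2\leq 2L\left(D(\uu)-D(\uu^*)\right)$ actually produces the constant $2L$ in the weighted dual norm. The coordinatewise co-coercivity obtained from Lemma \ref{coordinate_LS} gives, for each $i$ \emph{separately}, $\frac{1}{2L_i}|\nabla_i d(\uu)-\nabla_i d(\uu^*)|^2\leq d(\uu)-d(\uu^*)-\<\nabla d(\uu^*),\uu-\uu^*\>$; summing over $i$ yields $\left(\|\nabla d(\uu)-\nabla d(\uu^*)\|_L^*\right)^2\leq 2\hat n\left(D(\uu)-D(\uu^*)\right)$, i.e., the constant $\hat n$ rather than $L$. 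The ``global estimate with the weighted norm'' fares no better: the Euclidean bound $\|\nabla d(\uu)-\nabla d(\uu^*)\|^2\leq 2L\left(D(\uu)-D(\uu^*)\right)$ transfers to the $\|\cdot\|_L^*$ norm only at the cost of a factor $1/\min_i L_i$, since $\left(\|\vv\|_L^*\right)^2\leq\|\vv\|^2/\min_i L_i$. Neither $2\hat n$ nor $2L/\min_i L_i$ equals $2L$ in general, so the third inequality as stated is not established by your argument. To close it you would need a smoothness estimate of $d$ with constant $L$ taken directly with respect to the norm $\|\cdot\|_L$ (a different constant from the Euclidean one in (\ref{L_constant})), or you must accept a different constant in the conclusion.
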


\section{Convergence Rate Analysis of the Primal Solutions}\label{sec_proof}
In this section, we prove Theorem \ref{main_theorem}. First we study a simple case in Section \ref{sec:simple_case} to intuitively show how to relate the primal objective with the dual objective and why average helps to improve the convergence rate. Then we give the detailed analysis for the general case in Section \ref{sec:general_case}.
\subsection{Intuition: A Case Study}\label{sec:simple_case}
In this section, we study a simple case of problem (\ref{problem}):
\begin{eqnarray}
\begin{aligned}\notag
&\min_{\x\in\R^t} f(\x),\quad s.t.\quad \B\x=\b.
\end{aligned}
\end{eqnarray}
We only consider the gradient descent to solve the dual problem for simplicity, which has the recursion of
\begin{eqnarray}
\begin{aligned}\label{GD}
&\uu^{k+1}=\uu^k-\frac{1}{L}\nabla d(\uu^k).
\end{aligned}
\end{eqnarray}
Then $d(\uu)$ and $\nabla d(\uu)$ reduce to
\begin{eqnarray}
&&d(\uu)=-f(\x^*(\uu))-\<\uu,\B\x^*(\uu)-\b\>,\notag\\
&&\nabla d(\uu)=-(\B\x^*(\uu)-\b),\label{cont51}
\end{eqnarray}
which further leads to
\begin{eqnarray}
\begin{aligned}
&d(\uu)-\<\uu,\nabla d(\uu)\>=-f(\x^*(\uu)).\label{cont52}
\end{aligned}
\end{eqnarray}
(\ref{cont52}) is a crucial property to relate the primal objective and dual objective. From the $L$-smoothness of $d(\uu)$, we have
\begin{eqnarray}
\begin{aligned}\notag
d(\uu^{k+1})\leq d(\uu^k)+\<\nabla d(\uu^k),\uu^{k+1}-\uu^k\>+\frac{L}{2}\|\uu^{k+1}-\uu^k\|^2.
\end{aligned}
\end{eqnarray}
From $-f(\x^*)=d(\uu^*)\leq d(\uu^{k+1})$, (\ref{cont52}), (\ref{cont51}) and (\ref{GD}), we have
\begin{eqnarray}
\begin{aligned}\notag
-f(\x^*)\leq&-f(\x^*(\uu^k))+\<\nabla d(\uu^k),\uu^{k+1}\>+\frac{L}{2}\|\uu^{k+1}-\uu^k\|^2\\
=&-f(\x^*(\uu^k))-\<\B\x^*(\uu^k)-\b,\uu\>+\<\nabla d(\uu^k),\uu^{k+1}-\uu\>+\frac{L}{2}\|\uu^{k+1}-\uu^k\|^2\\
=&-f(\x^*(\uu^k))-\<\B\x^*(\uu^k)-\b,\uu\>-L\<\uu^{k+1}-\uu^k,\uu^{k+1}-\uu\>+\frac{L}{2}\|\uu^{k+1}-\uu^k\|^2\\
=&-f(\x^*(\uu^k))-\<\B\x^*(\uu^k)-\b,\uu\>+\frac{L}{2}\|\uu^k-\uu\|^2-\frac{L}{2}\|\uu^{k+1}-\uu\|^2.
\end{aligned}
\end{eqnarray}
Define $\hat\x^K=\frac{\sum_{k=0}^K\x^*(\uu^k)}{K+1}$, letting $\uu=\uu^*$ and summing over $k=0,1,\cdots,K$, we have
\begin{eqnarray}
\begin{aligned}\notag
f(\hat\x^K)+\<\B\hat\x^K-\b,\uu^*\>-f(\x^*)\overset{a}\leq& \frac{\sum_{k=0}^K\left(f(\x^*(\uu^k))+\<\B\x^*(\uu^k)-\b,\uu^*\>\right)}{K+1}-f(\x^*)\\
\leq& \frac{L}{2(K+1)}\|\uu^0-\uu^*\|^2,
\end{aligned}
\end{eqnarray}
where we use Jensen's inequality for $f(\x)$ in $\overset{a}\leq$. On the other hand, from (\ref{cont51}) and (\ref{GD}), we have
\begin{eqnarray}
\begin{aligned}\label{cont53}
&\|\B\hat\x^K\hspace*{-0.09cm}-\hspace*{-0.09cm}\b\|\hspace*{-0.09cm}=\hspace*{-0.09cm}\frac{1}{K\hspace*{-0.09cm}+\hspace*{-0.09cm}1}\hspace*{-0.09cm}\left\|\sum_{k=0}^K(\B\x^*(\uu^k)\hspace*{-0.09cm}-\hspace*{-0.09cm}\b)\right\|\hspace*{-0.09cm}=\hspace*{-0.09cm}\frac{1}{K+1}\hspace*{-0.09cm}\left\|\sum_{k=0}^K\nabla d(\uu^k)\right\|\hspace*{-0.09cm}=\hspace*{-0.09cm}\frac{L}{K+1}\hspace*{-0.09cm}\left\|\sum_{k=0}^K(\uu^{k+1}\hspace*{-0.09cm}-\hspace*{-0.09cm}\uu^k)\right\|\\
&=\frac{L}{K+1}\|\uu^{K+1}-\uu^0\|\leq \frac{L}{K+1}\left(\|\uu^0-\uu^*\|+\|\uu^{K+1}-\uu^*\|\right)\overset{b}\leq \frac{2L}{K+1}\|\uu^0-\uu^*\|,
\end{aligned}
\end{eqnarray}
where we use the non-increasing of $\|\uu^k-\uu^*\|$ for gradient descent in $\overset{b}\leq$. Thus, from the above two inequalities, we have
\begin{eqnarray}
\begin{aligned}\notag
f(\hat\x^K)-f(\x^*)\leq O\left(\frac{1}{K}\right)\quad\mbox{and}\quad\|\B\hat\x^K-\b\|\leq O\left(\frac{1}{K}\right),
\end{aligned}
\end{eqnarray}
which gives the $O\left(\frac{1}{K}\right)$ convergence rate of the primal solutions. When replacing gradient descent with accelerated gradient descent, the convergence rate can be improved to $O\left(\frac{1}{K^2}\right)$. However, more efforts are required for the analysis in the dual space and the averaging weights should be designed carefully. When solving the general problem (\ref{problem}), we should consider the separable part $\phi_i(\A_i^T\x)$ and the inequality constraints more carefully. When using the accelerated randomized dual coordinate ascent, we should pay more efforts to deal with the expectation, especially that the deduction in (\ref{cont53}) is not enough to deal with the constraint functions and it requires more skillful analysis. We give the detailed analysis in the following section.

From the above analysis, we can see that (\ref{cont52}) is a critical trick in our analysis. To show the importance of averaging the primal solutions, we give a simple convergence rate analysis measured at the non-averaged primal solution, which is adapted from \cite{Lu-2016-siam}. From (\ref{cont52}) and $-f(\x^*)=d(\uu^*)$, we have
\begin{eqnarray}
\begin{aligned}\notag
f(\x^*(\uu))-f(\x^*)=d(\uu^*)-d(\uu)+\<\uu,\nabla d(\uu)\>\leq \<\uu,\nabla d(\uu)\>\leq \sqrt{\hat n}\|\uu\|_{\infty}\|\nabla d(\uu)\|.
\end{aligned}
\end{eqnarray}
So we only need to bound $\|\nabla d(\uu)\|$. From the smoothness of $d(\uu)$, we have
\begin{eqnarray}
\begin{aligned}\notag
&d(\uu)-d(\uu^*)\geq d(\uu)-d(\uu+\nabla d(\uu)/L)\\
&\geq \<\nabla d(\uu),\uu+\nabla d(\uu)/L - \uu\>-\frac{L}{2}\|\uu+\nabla d(\uu)/L - \uu\|^2=\frac{1}{2L}\|\nabla d(\uu)\|^2.
\end{aligned}
\end{eqnarray}
Thus, we have $\|\nabla d(\uu)\|\leq\sqrt{2L(d(\uu)-d(\uu^*))}$. Since $d(\uu^K)-d(\uu^*)\leq O\left(\frac{1}{K}\right)$ for gradient descent and (\ref{cont51}), we have
\begin{eqnarray}
\begin{aligned}\notag
f(\x^*(\uu^K))-f(\x^*)\leq O\left(\frac{1}{\sqrt{K}}\right)\quad\mbox{and}\quad \|\B\x^*(\uu^K)-\b\|\leq O\left(\frac{1}{\sqrt{K}}\right).
\end{aligned}
\end{eqnarray}

Generally speaking, average helps to improve the convergence rate for many algorithms. Typical examples include the Douglas-Rachford splitting and ADMM \cite{yin-DR2017}. \cite{yin-DR2017} proved that for the two algorithms, the $O\left(\frac{1}{\sqrt{K}}\right)$ and $O\left(\frac{1}{K}\right)$ rates are tight for the non-averaged and averaged solutions, respectively.

From the above analysis for the non-averaged solutions, we can see that $\|\nabla d(\uu)\|$ serves as a measure of the optimality and feasibility of the primal solutions. \cite{nesterov-2012-gradient} studied how to make gradient small. Specifically, to find a point $\uu$ with $\|\nabla d(\uu)\|\leq\epsilon$, accelerated dual ascent needs $O\left( \frac{1}{\epsilon^{2/3}} \right)$ iterations. When using some  regularization technique, the $O\left( \frac{1}{\sqrt{\epsilon}}\log\frac{1}{\epsilon} \right)$ complexity can be attained. We can see that none of them reaches the optimal $O\left( \frac{1}{\sqrt{\epsilon}} \right)$ complexity. On the other hand, when averaging the primal solutions, we do not need to make the gradient small. Instead, we only need to make the average of gradients small, i.e., $\frac{1}{K+1}\hspace*{-0.09cm}\left\|\sum_{k=0}^K\nabla d(\uu^k)\right\|$ in (\ref{cont53}). This is the reason why average helps to improve the convergence rate.
\subsection{ARPCA for the General Problem}\label{sec:general_case}

To better analyze the method, we give an equivalent algorithm of ARDCA and describe it in Algorithm \ref{alg_arpcg_equ}. In Algorithm \ref{alg_arpcg_equ}, variables $\widetilde\z_{j}^k,\forall j\neq i_k,$ are only used for analysis. In practice, we do not need to compute $\widetilde\z_{j}^k,\forall j\neq i_k$.
\begin{algorithm}
   \caption{Equivalent ARDCA only for analysis}
   \label{alg_arpcg_equ}
\begin{algorithmic}
   \STATE Initialize $\z^0=\uu^0\in \DS$, $\theta_0=\frac{1}{\hat n}$.
   \FOR{$k=0, 1, 2, 3, \cdots$}
   \STATE $\vv^k=\theta_k\z^k+(1-\theta_k)\uu^k$,
   \FOR{$i=1, 2, \cdots, \hat n$}
   \STATE $\widetilde\z_i^k=\argmin_{u} \hat n\theta_kL_i\|u-\z_{i}^k\|^2+\<\nabla_{i}d(\vv^k),u-\z_{i}^k\>+h_{i}(u)$,
   \ENDFOR
   \STATE select $i_k$ randomly with probability $\frac{1}{\hat n}$,
   \STATE $\z_{i_k}^{k+1}=\widetilde\z_{i_k}^k$,
   \STATE $\z_j^{k+1}=\z_j^k$ for $j\neq i_k$,
   \STATE $\uu^{k+1}=\vv^k+\hat n\theta_k(\z^{k+1}-\z^k)$,
   \STATE $\theta_{k+1}=\frac{\sqrt{\theta_k^4+4\theta_k^2}-\theta_k^2}{2}$.
   \ENDFOR
   \STATE Output $\hat\x^K=\frac{\sum_{k=K_0}^K\frac{\x^*(\vv^k)}{\theta_k}}{\sum_{k=K_0}^K\frac{1}{\theta_k}}$ and $\uu^{K+1}$.
\end{algorithmic}
\end{algorithm}

The definition of $\{\theta_0,\theta_1,\cdots,\theta_K\}$ satisfies $\frac{1-\theta_{k}}{\theta_{k}^2}=\frac{1}{\theta_{k-1}^2}$. Define $\theta_{-1}=1/\sqrt{\hat n^2-\hat n}$, which also satisfies $\frac{1-\theta_{k}}{\theta_{k}^2}=\frac{1}{\theta_{k-1}^2}$ for $k=0$. For the sequence $\{\theta_0,\theta_1,\cdots,\theta_K\}$, we can simply prove the following properties.
\begin{lemma}\label{theta_lemma} For the sequence $\{\theta_0,\theta_1,\cdots,\theta_K\}$ satisfying $\theta_0=\frac{1}{\hat n}$ and $\frac{1-\theta_{k}}{\theta_{k}^2}=\frac{1}{\theta_{k-1}^2},\forall k\geq 0$, we have
\begin{enumerate}
\item $0\leq\theta_k\leq\theta_{k-1}\leq\cdots\leq\theta_1\leq\theta_0=\frac{1}{\hat n}$.
\item $\sum_{k=K_0}^K\frac{1}{\theta_k}=\frac{1}{\theta_K^2}-\frac{1}{\theta_{K_0-1}^2}$.
\item $\frac{k}{2}+\frac{k}{2\hat n}+\hat n\geq\frac{1}{\theta_k}\geq\frac{k}{2}+\hat n$.
\item Letting $K_0\leq\left\lfloor\frac{K}{\upsilon(1+1/\hat n)}+1\right\rfloor$, we have $\frac{1}{\theta_K^2}-\frac{1}{\theta_{K_0-1}^2}\geq \left(\frac{K^2}{4}+\hat nK\right)\left(1-\frac{1}{\upsilon}\right)$ for any $\upsilon>1$.
\end{enumerate}
\end{lemma}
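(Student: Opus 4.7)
The plan is to prove the four items in order, each building on the previous ones. The crucial reformulation of the recursion $\frac{1-\theta_k}{\theta_k^2} = \frac{1}{\theta_{k-1}^2}$, which I will use repeatedly, is the equivalent form
\[
\frac{1}{\theta_k^2} - \frac{1}{\theta_k} = \frac{1}{\theta_{k-1}^2}, \qquad \text{i.e.,} \qquad \frac{1}{\theta_k} = \frac{1}{\theta_k^2} - \frac{1}{\theta_{k-1}^2}.
\]
Solving this quadratic for $1/\theta_k$ (with the positive root, since $\theta_k>0$) gives $\frac{1}{\theta_k} = \tfrac12\bigl(1 + \sqrt{1 + 4/\theta_{k-1}^2}\bigr)$, which will drive the quantitative bounds.

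For item 1, I will show by induction that $0 < \theta_k \le \theta_{k-1} \le 1/\hat n$. The base case $\theta_0 = 1/\hat n$ is immediate. For the step, the identity $\theta_k^2 = \theta_{k-1}^2(1-\theta_k)$ forces $\theta_k \le \theta_{k-1}$ once we know $\theta_k \in (0,1)$, and positivity of $\theta_k$ together with $\theta_k < 1$ follows from the explicit formula above. Item 2 is a one-line telescoping argument: using $\frac{1}{\theta_k} = \frac{1}{\theta_k^2} - \frac{1}{\theta_{k-1}^2}$ and summing from $k = K_0$ to $K$ collapses to $\frac{1}{\theta_K^2} - \frac{1}{\theta_{K_0-1}^2}$.

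Item 3 is the main quantitative step and also the trickiest. For the lower bound $1/\theta_k \ge k/2 + \hat n$, I induct on $k$, using $\sqrt{1 + 4/\theta_{k-1}^2} \ge 2/\theta_{k-1}$ in the explicit formula to get $1/\theta_k \ge 1/2 + 1/\theta_{k-1}$, which advances the induction cleanly. For the upper bound I need a matching inequality of the form $\sqrt{1 + 4/\theta_{k-1}^2} \le 2/\theta_{k-1} + \theta_{k-1}/2$, which holds by squaring, and gives
\[
\frac{1}{\theta_k} \le \frac{1}{2} + \frac{1}{\theta_{k-1}} + \frac{\theta_{k-1}}{4}.
\]
Combining this with $\theta_{k-1} \le 1/\hat n$ from item 1 yields a per-step increment of at most $\tfrac12 + \tfrac{1}{2\hat n}$ (with room to spare), and induction closes out the upper bound. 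This square-root manipulation is the only slightly delicate point; the rest is bookkeeping.

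For item 4, I combine items 2 and 3. From item 3 I have $\frac{1}{\theta_K^2} \ge (K/2 + \hat n)^2$ and $\frac{1}{\theta_{K_0-1}^2} \le \bigl((K_0-1)(1+1/\hat n)/2 + \hat n\bigr)^2$. Setting $a = K/2 + \hat n$ and $b = (K_0-1)(1+1/\hat n)/2 + \hat n$, the constraint $K_0 - 1 \le K/(\upsilon(1+1/\hat n))$ from $K_0 \le \lfloor K/(\upsilon(1+1/\hat n))+1\rfloor$ yields $b \le K/(2\upsilon) + \hat n$, so $a - b \ge \tfrac{K}{2}(1 - 1/\upsilon)$ and $a + b \ge K/2 + 2\hat n$. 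Then $(a-b)(a+b) \ge (1-1/\upsilon)\bigl(K^2/4 + \hat n K\bigr)$, which is exactly the required lower bound. The boundary case $K_0 = 0$ (involving $\theta_{-1} = 1/\sqrt{\hat n^2 - \hat n}$) is handled separately by noting $\sqrt{\hat n^2 - \hat n} \le \hat n$, so the same chain of inequalities goes through with $b \le \hat n$.
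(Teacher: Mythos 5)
Your proposal is correct and follows essentially the same route as the paper: both arguments telescope the identity $\frac{1}{\theta_k}=\frac{1}{\theta_k^2}-\frac{1}{\theta_{k-1}^2}$ for item 2 and establish item 3 by showing the per-step increment of $\frac{1}{\theta_k}$ lies between $\frac{1}{2}$ and $\frac{1}{2}+\frac{1}{2\hat n}$ (the paper phrases this as $\left(\frac{1}{\theta_k}-\frac{1}{2}-\frac{1}{2\hat n}\right)^2\leq\frac{1}{\theta_{k-1}^2}\leq\left(\frac{1}{\theta_k}-\frac{1}{2}\right)^2$, while you obtain the equivalent bounds by solving the quadratic for $\frac{1}{\theta_k}$ and estimating the discriminant), then deduce item 4 by the difference-of-squares factorization. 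Your explicit treatment of the $K_0=0$ boundary case via $\frac{1}{\theta_{-1}^2}=\hat n^2-\hat n\leq\hat n^2$ is a detail the paper's one-line proof leaves implicit, and it is handled correctly.
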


We follow \cite{fercoq-2015-siam} to define the sequence $\{\alpha_{k,t}:0\leq t\leq k,k=0,1,\cdots\}$ satisfying
\begin{eqnarray}
\alpha_{0,0}=1,
\alpha_{1,t}=
\left \{
  \begin{array}{ll}
     1-\hat n\theta_0, & t=0,\\
     \hat n\theta_0, & t=1,\\
  \end{array}
\right.
\alpha_{k+1,t}=
\left \{
  \begin{array}{ll}
     (1-\theta_k)\alpha_{k,t}, & t\leq k-1,\\
     (1-\theta_k)\alpha_{k,k}-(\hat n-1)\theta_k, & t=k,\\
     \hat n\theta_k, & t=k+1.\\
  \end{array}
\right.\notag
\end{eqnarray}
From Lemma 2 in \cite{fercoq-2015-siam}, we have $0\leq \alpha_{k,t}\leq 1,\forall t=0,\cdots,k$, $\sum_{t=0}^k\alpha_{k,t}=1$ and $\uu^{k+1}=\sum_{t=0}^{k+1} \alpha_{k+1,t}\z^t$. Define $H^{k+1}=\sum_{t=0}^{k+1} \alpha_{k+1,t} h(\z^t)$, then $h(\uu^{k+1})\leq H^{k+1}$ due to Jensen's inequality for $h(\x)$. We can easily verify that variables $\z^k$, $\uu^{k}$ and $\vv^{k}$ remain in $\DS$ at all times by induction, described in the following lemma. Then we can use the Lipschitz smooth condition described in Lemma \ref{coordinate_LS}.
\begin{lemma}
For Algorithm \ref{alg_arpcg_equ}, we have $\z^k\in\DS$, $\uu^k\in\DS$ and $\vv^k\in\DS,\forall k\geq 0$.
\end{lemma}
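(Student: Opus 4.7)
The plan is to prove all three containments simultaneously by induction on $k$, in the order $\z^k \in \DS$, then $\uu^k \in \DS$, then $\vv^k \in \DS$. The base case is immediate: $\z^0 = \uu^0 \in \DS$ is given, and $\vv^0 = \theta_0 \z^0 + (1-\theta_0)\uu^0 = \uu^0 \in \DS$. The induction will exploit two facts that are already on the table: (i) the set $\DS$, being defined by nonnegativity of the last $m$ coordinates, is a convex cone, and (ii) the coefficients $\alpha_{k,t}$ introduced just before the lemma are nonnegative and sum to $1$, so that $\uu^{k+1}=\sum_{t=0}^{k+1}\alpha_{k+1,t}\z^t$ is a convex combination.

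For the $\z$-step, I would argue coordinatewise. Only the $i_k$-th coordinate of $\z^{k+1}$ changes; all others coincide with $\z^k$ and so stay in their respective feasible ranges by the inductive hypothesis. For the updated coordinate, if $i_k \le n+p$, there is no positivity constraint; if $i_k > n+p$, then $h_{i_k}(u)=I_{u\ge 0}(u)$ by \eqref{cont28}, so the argmin defining $\widetilde\z_{i_k}^k$ is automatically nonnegative. Hence $\z^{k+1}\in\DS$.

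For the $\uu$-step, I would invoke the representation $\uu^{k+1}=\sum_{t=0}^{k+1}\alpha_{k+1,t}\z^t$ from Lemma 2 of \cite{fercoq-2015-siam}, quoted right before the lemma statement. Since $\alpha_{k+1,t}\in[0,1]$ with $\sum_t\alpha_{k+1,t}=1$, and since each $\z^t\in\DS$ (from the just-completed induction on the $\z$-coordinates and the base case), convexity of $\DS$ gives $\uu^{k+1}\in\DS$. Finally, for the $\vv$-step, Lemma \ref{theta_lemma} guarantees $\theta_{k+1}\in[0,1/\hat n]\subseteq[0,1]$, so $\vv^{k+1}=\theta_{k+1}\z^{k+1}+(1-\theta_{k+1})\uu^{k+1}$ is a convex combination of two points in $\DS$ and therefore lies in $\DS$.

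No real obstacle arises; the only point that requires care is the order of the three implications within one induction step, since $\uu^{k+1}$ is defined in terms of $\vv^k$ and $\z^{k+1}-\z^k$, and a direct coordinatewise check on the formula $\uu^{k+1}=\vv^k+\hat n\theta_k(\z^{k+1}-\z^k)$ does not obviously give nonnegativity of the last $m$ coordinates (the term $\hat n\theta_k(\z^{k+1}-\z^k)$ can be negative). Routing through the convex-combination identity $\uu^{k+1}=\sum_t\alpha_{k+1,t}\z^t$ sidesteps this issue cleanly, which is precisely why the $\alpha_{k,t}$-machinery was set up in the paragraph preceding the lemma.
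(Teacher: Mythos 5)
Your proof is correct and follows exactly the route the paper intends: the paper states only that the lemma "can easily be verified by induction," but it sets up the convex-combination representation $\uu^{k+1}=\sum_{t=0}^{k+1}\alpha_{k+1,t}\z^t$ with $\alpha_{k+1,t}\in[0,1]$ summing to one immediately before the lemma precisely so that membership of $\uu^{k+1}$ in the convex set $\DS$ follows from membership of the $\z^t$, which in turn follows coordinatewise from the indicator $h_i=I_{u\ge0}$ for $i>n+p$. Your observation that the raw recursion $\uu^{k+1}=\vv^k+\hat n\theta_k(\z^{k+1}-\z^k)$ does not directly yield nonnegativity is exactly the reason this detour is needed.
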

For Algorithm \ref{alg_arpcg_equ}, let
\begin{eqnarray}
\y_i^k=-2\hat n\theta_k L_i(\widetilde\z_i^k-\z_{i}^k)-\nabla_{i} d(\vv^k),\quad 1\leq i\leq n. \label{def_y}
\end{eqnarray}
From the optimality condition of $\widetilde\z_i^k$ in Algorithm \ref{alg_arpcg_equ}, we have $\y_i^k\in\partial h_i(\widetilde\z_i^k), 1\leq i\leq n$. Define
\begin{eqnarray}
\sigma_1(\uu_i,\widetilde\z_i^k)=\left\{
  \begin{array}{lcl}
    h_{i}(\widetilde\z_i^k)+\<\y_i^k,\uu_{i}-\widetilde\z_i^k\>-h_{i}(\uu_{i}), && \mbox{ if }i\leq n,\\
    0, && \mbox{ if }n< i\leq n+p+m,\\
  \end{array}
\right.\label{cont49}
\end{eqnarray}
and
\begin{eqnarray}
\sigma_2(\uu,\vv^k)=d(\vv^k)+\<\nabla d(\vv^k),\uu-\vv^k\>-d(\uu).\label{cont50}
\end{eqnarray}
From the convexity of $h_i$ and $d$, we have $\sigma_1(\uu_i,\widetilde\z_i^k)\leq 0$ and $\sigma_2(\uu,\vv^k)\leq 0$. We use $\sigma_1(\uu_i,\widetilde\z_i^k)$ and $\sigma_2(\uu,\vv^k)$ to relate the primal objective function, primal constraint functions and dual objective function in the following lemma.
\begin{lemma}\label{pd_relation}
Suppose Assumption \ref{assumption} holds. For any $\uu\in\DS$, we have
\begin{eqnarray}
\begin{aligned}\label{pd_relation1}
-\sum_{i=1}^n\sigma_1(\uu_i,\widetilde\z_i^k)-\sigma_2(\uu,\vv^k)=\<\triangle(\x^*(\vv^k),n\y^k),\uu\>+ D(\uu)+f(\x^*(\vv^k))+\frac{1}{n}\phi(n\y^k),
\end{aligned}
\end{eqnarray}
where
\begin{eqnarray}
\triangle(\x,\y)=\left[(\A^T\x-\y)^T/n,(\B\x+\b)^T,g_1(\x),\cdots,g_m(\x)\right]^T.\notag
\end{eqnarray}
\end{lemma}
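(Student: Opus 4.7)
The identity is an algebraic bookkeeping result; the plan is to expand both $\sigma_1$ and $\sigma_2$ using the explicit formulas for $h_i$, the gradient formula (\ref{gradient}) for $\nabla d$, and a Fenchel-Young equality, and then collect terms.

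First I would handle $-\sum_{i=1}^n \sigma_1(\uu_i,\widetilde\z_i^k)$. For $i\le n$, the definition $h_i(\uu_i)=\frac{1}{n}\phi_i^*(\uu_i)$ and the optimality of $\widetilde\z_i^k$ give $\y_i^k\in\partial h_i(\widetilde\z_i^k)=\frac{1}{n}\partial\phi_i^*(\widetilde\z_i^k)$, so $n\y_i^k\in\partial\phi_i^*(\widetilde\z_i^k)$. Fenchel-Young then holds with equality: $\phi_i^*(\widetilde\z_i^k)+\phi_i(n\y_i^k)=\<\widetilde\z_i^k,n\y_i^k\>$, i.e., $\<\y_i^k,\widetilde\z_i^k\>=\frac{1}{n}\phi_i^*(\widetilde\z_i^k)+\frac{1}{n}\phi_i(n\y_i^k)$. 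Plugging this into the expansion of $-\sigma_1(\uu_i,\widetilde\z_i^k)$ makes the $\phi_i^*(\widetilde\z_i^k)$ terms cancel, leaving
\[
-\sum_{i=1}^n\sigma_1(\uu_i,\widetilde\z_i^k)=-\<\y^k,\uu_{1:n}\>+\frac{1}{n}\phi(n\y^k)+\sum_{i=1}^n h_i(\uu_i).
\]

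Next I would expand $-\sigma_2(\uu,\vv^k)=-d(\vv^k)-\<\nabla d(\vv^k),\uu-\vv^k\>+d(\uu)$. The gradient formula (\ref{gradient}) immediately identifies $-\<\nabla d(\vv^k),\uu\>$ with $\<\A^T\x^*(\vv^k)/n,\uu_{1:n}\>+\<\B\x^*(\vv^k)+\b,\uu_{n+1:n+p}\>+\sum_{i=1}^m \uu_{n+p+i}g_i(\x^*(\vv^k))$. The key algebraic identity is
\[
d(\vv^k)-\<\nabla d(\vv^k),\vv^k\>=-f(\x^*(\vv^k)),
\]
which follows directly from $d(\uu)=-L_f(\x^*(\uu),\uu)$ with $L_f$ defined in (\ref{xu_solution}): the explicit $d(\vv^k)$ cancels precisely the $-\<\nabla d(\vv^k),\vv^k\>$ contribution coming from (\ref{gradient}), leaving $-f(\x^*(\vv^k))$.

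Finally I would add the two expansions and recognize the result. The three constraint-related terms from $-\sigma_2$, together with the $-\<\y^k,\uu_{1:n}\>$ contribution from $-\sum\sigma_1$, combine to exactly $\<\triangle(\x^*(\vv^k),n\y^k),\uu\>$, because the first $n$ coordinates of $\triangle$ are $(\A^T\x^*(\vv^k)-n\y^k)/n$. Since $\uu\in\DS$ forces $\uu_{n+p+i}\ge 0$, the indicator terms $h_i(\uu_i)$ for $i>n+p$ vanish, as do the $h_i\equiv 0$ for $n<i\le n+p$; hence $\sum_{i=1}^n h_i(\uu_i)=h(\uu)$ and $d(\uu)+h(\uu)=D(\uu)$. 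The remaining $+f(\x^*(\vv^k))+\frac{1}{n}\phi(n\y^k)$ are already in the desired form, completing the identity.

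The main (and really only) obstacle is the Fenchel-Young equality step: one must verify that $\y_i^k\in\partial h_i(\widetilde\z_i^k)$ really does imply $n\y_i^k\in\partial\phi_i^*(\widetilde\z_i^k)$ and hence the equality $\phi_i^*(\widetilde\z_i^k)+\phi_i(n\y_i^k)=\<\widetilde\z_i^k,n\y_i^k\>$. Everything else is a direct substitution-and-collect computation.
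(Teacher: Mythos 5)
Your proposal is correct and follows essentially the same route as the paper: the paper likewise expands $\sigma_2$ via the identity $f(\x^*(\uu))=-d(\uu)+\<\nabla d(\uu),\uu\>$ and turns $\<\y_i^k,\widetilde\z_i^k\>-h_i(\widetilde\z_i^k)$ into $h_i^*(\y_i^k)=\frac{1}{n}\phi_i(n\y_i^k)$ using the subgradient/conjugate (Fenchel--Young equality) relation together with the scaling rule for conjugates, which is exactly your step $n\y_i^k\in\partial\phi_i^*(\widetilde\z_i^k)$. The step you flag as the only obstacle is handled the same way in the paper and poses no difficulty since $\phi_i$ is convex and finite on $\R$, hence closed, so $\phi_i^{**}=\phi_i$.
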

\begin{proof}
From (\ref{gradient}), (\ref{xu_solution}) and the definition of $d(\uu)$ in (\ref{cont28}), we have
\begin{eqnarray}
f(\x^*(\uu))=-d(\uu)+\nabla d(\uu)^T\uu.\notag
\end{eqnarray}
Thus, by the definition of $\sigma_2(\uu,\vv^k)$ in (\ref{cont50}), we have
\begin{eqnarray}
\begin{aligned}
\sigma_2(\uu,\vv^k)=&\<\nabla d(\vv^k),\uu\>-d(\uu)-\left[\<\nabla d(\vv^k),\vv^k\>-d(\vv^k)\right]\notag\\
=&\<\nabla d(\vv^k),\uu\>-d(\uu)-f(\x^*(\vv^k)).\notag
\end{aligned}
\end{eqnarray}
From the definition of $\sigma_1(\uu_i,\widetilde\z_i^k)$ in (\ref{cont49}) and the fact that $\y_i^k\in\partial h_i(\widetilde\z_i^k),1\leq i\leq n$, we can also have
\begin{eqnarray}
\begin{aligned}
\sum_{i=1}^n\sigma_1(\uu_i,\widetilde\z_i^k)=&\sum_{i=1}^{n}\left(\<\y_i^k,\uu_i\>-h_i(\uu_i)-\left[\<\y_i^k,\widetilde\z_i^k\>-h_i(\widetilde\z_i^k)\right]\right)\notag\\
\overset{a}=&\sum_{i=1}^{n}\left(\<\y_i^k,\uu_i\>-h_i(\uu_i)-h_i^*(\y_i^k)\right)\notag\\
\overset{b}=&\sum_{i=1}^{n}\left(\<\y_i^k,\uu_i\>-h_i(\uu_i)-\frac{1}{n}\phi_{i}(n\y_i^k)\right),\notag
\end{aligned}
\end{eqnarray}
where we use the definition of conjugate in $\overset{a}=$ and the fact that $\varphi(x)=\alpha\psi(x)\Rightarrow \varphi^*(y)=\alpha \psi^*(y/\alpha)$ and $h_i(\uu_i)=\frac{1}{n}\phi_i^*(\uu_i),\forall i\leq n$ in $\overset{b}=$. So the result of (\ref{pd_relation1}) immediately follows by adding the above two equations and using (\ref{gradient}).
\end{proof}

\subsubsection{Primal Objective}

In the following lemma, we use the relation (\ref{pd_relation1}) to bound the Lagrangian function $L_F(\hat\x^K,\hat\y^K,\uu)$. We also bound $\sum_{k=K_0}^K\E_{\xi_K}[\|\z^{k+1}-\z^k\|_L^2]$ and $\E_{\xi_K}[\|\z^{K+1}-\uu^*\|_L^2]$, which will be used to bound the constraint functions later. As a by-product, we also give the convergence rate of the dual solutions.
\begin{lemma}\label{lemma0}
Suppose Assumption \ref{assumption} holds. Define $\hat\x^K=\frac{\sum_{k=K_0}^K\frac{\x^*(\vv^k)}{\theta_k}}{\sum_{k=K_0}^K\frac{1}{\theta_k}}$ and $\hat\y^K=\frac{\sum_{k=K_0}^K\frac{n\y^k}{\theta_k}}{\sum_{k=K_0}^K\frac{1}{\theta_k}}$. Then we have
\begin{eqnarray}
\begin{aligned}\label{cont14}
&\left(\frac{1}{\theta_K^2}-\frac{1}{\theta_{K_0-1}^2}\right)\E_{\xi_K}\left[\<\triangle(\hat\x^K,\hat\y^K),\uu\>+ D(\uu^*)+f(\hat\x^K)+\frac{1}{n}\phi(\hat\y^K)\right]\\
\leq&2(\hat n^2-\hat n)\left( D(\uu^0)- D(\uu^*)\right)+2\hat n^2\|\uu^0-\uu^*\|_L^2+2\hat n^2\|\uu-\uu^*\|_L^2
\end{aligned}
\end{eqnarray}
for any $\uu\in\DS$ independent on $\xi_K$. We also have
\begin{eqnarray}
&&\hspace*{-1.2cm}\frac{1}{2}\sum_{k=K_0}^K\E_{\xi_K}[\|\z^{k+1}-\z^k\|_L^2]\leq  (1-\theta_0)\left(D(\uu^0)- D(\uu^*)\right)+\|\uu^0-\uu^*\|_L^2,\label{cont1}\\
&&\hspace*{-1.2cm}\E_{\xi_K}[\|\z^{K+1}-\uu^*\|_L^2 ]\leq  (1-\theta_0)\left(D(\uu^0)- D(\uu^*)\right)+\|\uu^0-\uu^*\|_L^2,\label{cont3}\\
&&\hspace*{-1.2cm}\frac{\E_{\xi_K}\hspace*{-0.07cm}[ D(\uu^{K+1})]\hspace*{-0.07cm}-\hspace*{-0.07cm} D(\uu^*)}{\theta_K^2}\hspace*{-0.07cm}+\hspace*{-0.07cm}\hat n^2\|\z^{K+1}\hspace*{-0.07cm}-\hspace*{-0.07cm}\uu^*\|_L^2\hspace*{-0.07cm}\leq\hspace*{-0.07cm}\hat n^2\hspace*{-0.07cm}\left( (1\hspace*{-0.07cm}-\hspace*{-0.07cm}\theta_0)\hspace*{-0.07cm}\left(D(\uu^0)\hspace*{-0.07cm}-\hspace*{-0.07cm} D(\uu^*)\right)\hspace*{-0.07cm}+\hspace*{-0.07cm}\|\uu^0\hspace*{-0.07cm}-\hspace*{-0.07cm}\uu^*\|_L^2\right).\label{cont34}
\end{eqnarray}
\end{lemma}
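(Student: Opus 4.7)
The plan is to run the standard potential-function argument for accelerated randomized coordinate descent in the spirit of \cite{fercoq-2015-siam,xiao-2015-siam} on the dual problem (\ref{dual_problem}), but to substitute the primal-dual identity of Lemma \ref{pd_relation} into the one-step recursion so that a single potential simultaneously tracks the dual gap $D(\uu^{k+1})-D(\uu^*)$, the squared distance $\|\z^{k+1}-\uu\|_L^2$, and the primal-Lagrangian quantity $\<\triangle(\x^*(\vv^k),n\y^k),\uu\>+f(\x^*(\vv^k))+\tfrac{1}{n}\phi(n\y^k)$. All four conclusions of the lemma will then drop out of different specializations of one master telescoping inequality, differing only in the choice of $\uu$, the summation range, and which nonnegative terms are discarded on the left.

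I would begin by applying the coordinatewise descent inequality (\ref{C_Lipschitz_smooth}) to $\uu^{k+1}=\vv^k+\hat n\theta_k(\z^{k+1}-\z^k)$, giving
\[
d(\uu^{k+1})\leq d(\vv^k)+\hat n\theta_k\<\nabla_{i_k}d(\vv^k),\z^{k+1}_{i_k}-\z^k_{i_k}\>+\tfrac{\hat n^2\theta_k^2 L_{i_k}}{2}\|\z^{k+1}_{i_k}-\z^k_{i_k}\|^2,
\]
and combining it with the Jensen bound $h(\uu^{k+1})\leq H^{k+1}$ through the $\alpha_{k,t}$ decomposition $\uu^{k+1}=\sum_t\alpha_{k+1,t}\z^t$. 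Then I would use the definition (\ref{def_y}) of $\y^k$ — which is exactly the prox-optimality condition of $\widetilde\z^k$ in Algorithm \ref{alg_arpcg_equ} — to rewrite the resulting gradient-plus-$h$ terms in the form $-\sum_i\sigma_1(\uu_i,\widetilde\z_i^k)-\sigma_2(\uu,\vv^k)$ modulo the prox quadratic, and invoke Lemma \ref{pd_relation} to replace that combination by $\<\triangle(\x^*(\vv^k),n\y^k),\uu\>+D(\uu)+f(\x^*(\vv^k))+\tfrac{1}{n}\phi(n\y^k)$. Taking the conditional expectation $\E_{i_k|\xi_{k-1}}$ (which converts random-coordinate prox terms into full-dimensional ones via $\E_{i_k}[\z^{k+1}-\z^k]=\tfrac{1}{\hat n}(\widetilde\z^k-\z^k)$) and expanding $\|\z^{k+1}-\uu\|_L^2$ against the prox quadratic by the three-point identity, I expect to land on a one-step inequality of the schematic form
\[
\begin{aligned}
&\tfrac{1}{\theta_k^2}\E_{i_k|\xi_{k-1}}[D(\uu^{k+1})-D(\uu^*)]+\tfrac{\hat n^2}{2}\E_{i_k|\xi_{k-1}}\|\z^{k+1}-\uu\|_L^2+\tfrac{\hat n^2}{2}\E_{i_k|\xi_{k-1}}\|\z^{k+1}-\z^k\|_L^2\\
&\quad+\tfrac{1}{\theta_k}\!\left[\<\triangle(\x^*(\vv^k),n\y^k),\uu\>+D(\uu^*)+f(\x^*(\vv^k))+\tfrac{1}{n}\phi(n\y^k)\right]\\
&\leq\tfrac{1-\theta_k}{\theta_k^2}(D(\uu^k)-D(\uu^*))+\tfrac{\hat n^2}{2}\|\z^k-\uu\|_L^2,
\end{aligned}
\]
where the final $D(\uu)\to D(\uu^*)$ shift is absorbed into the dual-gap term on the right using $D(\uu)\geq D(\uu^*)$.

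The identity $\tfrac{1-\theta_{k+1}}{\theta_{k+1}^2}=\tfrac{1}{\theta_k^2}$ from Lemma \ref{theta_lemma} collapses the dual-gap and $\|\z-\uu\|_L^2$ pieces into a clean telescope. Summing from $k=0$ to $K$ with $\uu=\uu^*$ and discarding the nonnegative primal-Lagrangian terms (which are $\geq 0$ since $L_F(\x^*(\vv^k),n\y^k,\uu^*)+D(\uu^*)\geq 0$ by weak duality) yields (\ref{cont34}); keeping instead the summed $\|\z^{k+1}-\z^k\|_L^2$ contributions yields (\ref{cont1}), and isolating the last $\|\z^{K+1}-\uu^*\|_L^2$ term yields (\ref{cont3}). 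For (\ref{cont14}) I would telescope only from $K_0$ to $K$, so that the weighted primal-Lagrangian terms accumulate; apply Jensen's inequality for $f$ and $\phi$ to produce $f(\hat\x^K)+\tfrac{1}{n}\phi(\hat\y^K)$; exploit the linearity of $\triangle$ in $(\x,\y)$ (which holds since $g$ is affine in (\ref{function_g})) to collapse $\sum_k\tfrac{1}{\theta_k}\<\triangle(\x^*(\vv^k),n\y^k),\uu\>$ into $\bigl(\sum_k\tfrac{1}{\theta_k}\bigr)\<\triangle(\hat\x^K,\hat\y^K),\uu\>$; and bound the residual $\|\z^{K_0}-\uu\|_L^2$ via $2\|\z^{K_0}-\uu^*\|_L^2+2\|\uu-\uu^*\|_L^2$ together with (\ref{cont3}) at index $K_0-1$. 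Part 4 of Lemma \ref{theta_lemma} then converts the $\tfrac{1}{\theta_K^2}-\tfrac{1}{\theta_{K_0-1}^2}$ factor into the stated $(K^2/4+\hat nK)(1-1/\upsilon)$ denominator.

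The main obstacle will be retaining the $\tfrac{\hat n^2}{2}\|\z^{k+1}-\z^k\|_L^2$ residual \emph{inside} the expectation rather than losing it to a premature Jensen step, since this is what allows (\ref{cont1}) to control $\E_{\xi_K}[\|\cdot\|_L^*]$ (rather than the weaker $\|\E_{\xi_K}[\cdot]\|_L^*$) in the constraint-violation bound of the upcoming Lemma \ref{lemma2}. As flagged in the Remark following Algorithm \ref{alg_arpcg}, this is precisely the reason for using the smaller prox step-size relative to the original APCG/APPROX: the smaller step leaves exactly that quadratic on the favourable side of the one-step inequality. Once this residual is secured, the remainder of the derivation is purely $\theta_k$-algebra from Lemma \ref{theta_lemma} together with careful reindexing between $K_0-1$ and $K_0$.
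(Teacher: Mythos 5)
Your proposal is correct and follows essentially the same route as the paper: the paper isolates your one-step potential inequality as Lemma \ref{lemma3} (keeping $\sigma_1,\sigma_2$ abstract and only invoking Lemma \ref{pd_relation} at the end, and using Jensen plus $\uu_{n+p+1:n+p+m}\geq\0$ for the $g$-block where you invoke affineness), then telescopes with $\frac{1-\theta_k}{\theta_k^2}=\frac{1}{\theta_{k-1}^2}$ and specializes $\uu$ and the summation range exactly as you describe. You also correctly identify the role of the retained $\frac{\hat n^2\theta_k^2}{2}\|\z^{k+1}-\z^k\|_L^2$ term and of the smaller prox step-size, which is the one genuinely delicate point of the argument.
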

The proof of Lemma \ref{lemma0} is based on the following lemma, which gives the progress in one iteration in the dual space. Lemma \ref{lemma3} can be proved by the techniques in the proof of Theorem 3 in \cite{fercoq-2015-siam}, except that we keep the additional terms $\sum_{i=1}^n\sigma_1(\uu_i,\widetilde\z_i^k)+\sigma_2(\uu,\vv^k)$ and $\|\z^{k+1}-\z^k\|_L^2$. We leave the proof of Lemma \ref{lemma3} in Appendix D.
\begin{lemma}\label{lemma3}
Suppose Assumption \ref{assumption} holds. Then we have
\begin{eqnarray}
\begin{aligned}\label{cont4}
&\E_{\xi_K}\left[d(\uu^{k+1})+H^{k+1}- D(\uu)+\hat n^2\theta_k^2\|\z^{k+1}-\uu\|_L^2+\frac{\hat n^2\theta_k^2}{2}\|\z^{k+1}-\z^k\|_L^2\right]\\
\leq&(1\hspace*{-0.07cm}-\hspace*{-0.07cm}\theta_k)\E_{\xi_K}\hspace*{-0.07cm}[d(\uu^k)\hspace*{-0.07cm}+\hspace*{-0.07cm}H^k\hspace*{-0.07cm}-\hspace*{-0.07cm} D(\uu)]\hspace*{-0.07cm}+\hspace*{-0.07cm}\hat n^2\theta_k^2\E_{\xi_K}\hspace*{-0.07cm}[\|\z^k\hspace*{-0.07cm}-\hspace*{-0.07cm}\uu\|_L^2]\hspace*{-0.07cm}+\hspace*{-0.07cm}\theta_k\E_{\xi_K}\hspace*{-0.07cm}\left[\sum_{i=1}^n\hspace*{-0.07cm}\sigma_1(\uu_i,\widetilde\z_i^k)\hspace*{-0.07cm}+\hspace*{-0.07cm}\sigma_2(\uu,\vv^k)\hspace*{-0.07cm}\right]
\end{aligned}
\end{eqnarray}
for any $\uu\in\DS$ independent on $\xi_K$.
\end{lemma}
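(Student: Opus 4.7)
My plan is to follow the one-step progress proof of Theorem~3 in \cite{fercoq-2015-siam} for APPROX applied to $D$, and then identify the two places where the argument is loose, so that the usually discarded quantities $\sigma_1(\uu_i,\widetilde\z_i^k)$, $\sigma_2(\uu,\vv^k)$, and the extra quadratic $\frac{\hat n^2\theta_k^2}{2}\|\z^{k+1}-\z^k\|_L^2$ survive on the appropriate sides of \eqref{cont4}. No new mathematical ingredient is needed; both retentions are obtained by substituting equalities for the two convexity inequalities that are usually applied, and by exploiting that the prox weight in Algorithm \ref{alg_arpcg_equ} is $\hat n\theta_k L_i$ rather than $\frac{1}{2}\hat n\theta_k L_i$.

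First I would apply the coordinatewise descent inequality \eqref{C_Lipschitz_smooth} at the pair $(\vv^k,\uu^{k+1})$. Since $\uu^{k+1}-\vv^k=\hat n\theta_k(\z^{k+1}-\z^k)$ and $\z^{k+1}$ differs from $\z^k$ only in coordinate $i_k$, this gives $d(\uu^{k+1})\leq d(\vv^k)+\hat n\theta_k\<\nabla_{i_k}d(\vv^k),\widetilde\z_{i_k}^k-\z_{i_k}^k\>+\frac{\hat n^2\theta_k^2L_{i_k}}{2}\|\widetilde\z_{i_k}^k-\z_{i_k}^k\|^2$. Next I would invoke the three-point inequality for $\widetilde\z_i^k$: the subproblem objective is $2\hat n\theta_k L_i$-strongly convex, so for every $\uu_i\in\mathrm{dom}\,h_i$, $\hat n\theta_kL_i\|\widetilde\z_i^k-\z_i^k\|^2+\<\nabla_id(\vv^k),\widetilde\z_i^k-\uu_i\>+h_i(\widetilde\z_i^k)-h_i(\uu_i)+\hat n\theta_kL_i\|\uu_i-\widetilde\z_i^k\|^2\leq\hat n\theta_kL_i\|\uu_i-\z_i^k\|^2$. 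The ``extra'' $\hat n\theta_kL_i\|\widetilde\z_i^k-\z_i^k\|^2$ on the left, present only because of the smaller step size discussed in the remark, is precisely what will produce the surviving $\frac{\hat n^2\theta_k^2}{2}\|\z^{k+1}-\z^k\|_L^2$ after averaging.

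Third, rather than invoking the convexity inequalities in the standard proof, I would use the definitions \eqref{cont49}--\eqref{cont50} as identities: $d(\vv^k)+\<\nabla d(\vv^k),\uu-\vv^k\>=d(\uu)-\sigma_2(\uu,\vv^k)$, and, because $\y_i^k\in\partial h_i(\widetilde\z_i^k)$ for $i\leq n$, $h_i(\widetilde\z_i^k)+\<\y_i^k,\uu_i-\widetilde\z_i^k\>=h_i(\uu_i)-\sigma_1(\uu_i,\widetilde\z_i^k)$ (for $i>n$ the term $\sigma_1$ is defined to be zero and the one-sided convexity of $h_i$ is enough). These identities simply transport the non-positive residuals to the right of \eqref{cont4} intact. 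Then I would take the conditional expectation $\E_{i_k\mid\xi_{k-1}}$: uniform sampling converts $\sum_{i=1}^{\hat n}$ into $\hat n\,\E_{i_k}$, and the recursion for $\alpha_{k+1,t}$ together with the identity $\E_{i_k}[h_i(\z_i^{k+1})]=\frac{1}{\hat n}h_i(\widetilde\z_i^k)+\frac{\hat n-1}{\hat n}h_i(\z_i^k)$ folds the $h$-contributions into $(1-\theta_k)H^k$ and $\E_{i_k}[H^{k+1}]$ exactly as in \cite{fercoq-2015-siam}.

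Finally, I would use $\vv^k=\theta_k\z^k+(1-\theta_k)\uu^k$ to split $\vv^k-\uu$, complete the square against the quadratic $\hat n\theta_kL_i\|\uu_i-\z_i^k\|^2$ coming from Step~2, and apply the tower property $\E_{\xi_K}=\E_{\xi_{k-1}}\E_{i_k\mid\xi_{k-1}}$ (admissible because $\uu$ is independent of $\xi_K$). The main obstacle will be this final bookkeeping: the distance terms must come out with coefficient $\hat n^2\theta_k^2$ rather than the naive $\hat n\theta_k^2$, because $\z^{k+1}$ and $\z^k$ differ in one coordinate and $\E_{i_k}[\|\z^{k+1}-\z^k\|_L^2]=\frac{1}{\hat n}\|\widetilde\z^k-\z^k\|_L^2$. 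Tracking this factor of $\hat n$ everywhere is what forces the smaller prox step size and what allows the two surviving terms $\frac{\hat n^2\theta_k^2}{2}\|\z^{k+1}-\z^k\|_L^2$ on the LHS and $\sigma_1,\sigma_2$ on the RHS to emerge without disturbing the standard telescoping structure $\hat n^2\theta_k^2(\|\z^k-\uu\|_L^2-\|\z^{k+1}-\uu\|_L^2)$.
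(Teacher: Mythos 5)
Your proposal is correct and follows essentially the same route as the paper's Appendix~D: the coordinatewise descent inequality \eqref{C_Lipschitz_smooth}, the optimality condition of $\widetilde\z_i^k$ combined with the identity $2\<a-b,a-c\>=\|a-b\|^2+\|a-c\|^2-\|b-c\|^2$ (your three-point inequality in equivalent form), retention of $\sigma_1,\sigma_2$ by substituting the definitions \eqref{cont49}--\eqref{cont50} for the usual convexity bounds, and the conditional-expectation/$H^k$ bookkeeping from \cite{fercoq-2015-siam}. You also correctly locate the origin of the surviving term $\frac{\hat n^2\theta_k^2}{2}\|\z^{k+1}-\z^k\|_L^2$ in the halved prox step size and track the factor of $\hat n$ in $\E_{i_k|\xi_{k-1}}[\|\z^{k+1}-\z^k\|_L^2]=\frac{1}{\hat n}\sum_i L_i\|\widetilde\z_i^k-\z_i^k\|^2$ exactly as the paper does.
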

Based on Lemma \ref{lemma3}, we are ready to prove Lemma \ref{lemma0}.
\begin{proof}
Dividing both sides of (\ref{cont4}) by $\theta_k^2$ and using $\frac{1-\theta_k}{\theta_k^2}=\frac{1}{\theta_{k-1}^2}$, we have
\begin{eqnarray}
\begin{aligned}\label{cont29}
&\frac{\E_{\xi_K}[d(\uu^{k+1})+H^{k+1}- D(\uu)]}{\theta_k^2}+\hat n^2\E_{\xi_K}[\|\z^{k+1}-\uu\|_L^2]+\frac{\hat n^2}{2}\E_{\xi_K}[\|\z^{k+1}-\z^k\|_L^2]\\
\leq&\frac{\E_{\xi_K}[d(\uu^k)+H^k- D(\uu)]}{\theta_{k-1}^2}+\hat n^2\E_{\xi_K}[\|\z^k-\uu\|_L^2]+\frac{\E_{\xi_K}\left[\sum_{i=1}^n\sigma_1(\uu_i,\widetilde\z_i^k)+\sigma_2(\uu,\vv^k)\right]}{\theta_k}.
\end{aligned}
\end{eqnarray}
Letting $\uu=\uu^*$, we know $\frac{\E_{\xi_K}[d(\uu^k)+H^k]- D(\uu^*)}{\theta_{k-1}^2}+\hat n^2\E_{\xi_K}[\|\z^k-\uu^*\|_L^2]$ is decreasing since $\sigma_1(\uu_i,\z_{i}^{k+1})\leq 0$ and $\sigma_2(\uu,\vv_{i}^{k})\leq 0,\forall \uu$. Thus, we have
\begin{eqnarray}
\E_{\xi_K}\hspace*{-0.07cm}\left[\frac{d(\uu^{K_0})\hspace*{-0.07cm}+\hspace*{-0.07cm}H^{K_0}\hspace*{-0.07cm}-\hspace*{-0.07cm} D(\uu^*)}{\theta_{K_0-1}^2}\hspace*{-0.07cm}+\hspace*{-0.07cm}\hat n^2\|\z^{K_0}\hspace*{-0.07cm}-\hspace*{-0.07cm}\uu^*\|_L^2\right]\leq(\hat n^2-\hat n)\left( D(\uu^0)\hspace*{-0.07cm}-\hspace*{-0.07cm} D(\uu^*)\right)\hspace*{-0.07cm}+\hspace*{-0.07cm}\hat n^2\|\z^0\hspace*{-0.07cm}-\hspace*{-0.07cm}\uu^*\|_L^2,\label{cont30}
\end{eqnarray}
where we use $H^0=h(\uu^0)$ and $\frac{1}{\theta^2_{-1}}=\hat n^2-\hat n$. Summing (\ref{cont29}) over $k=K_0,K_0+1,\cdots,K$ and using $h(\uu^{k+1})\leq H^{K+1}$, we have
\begin{eqnarray}
\begin{aligned}
&\frac{\E_{\xi_K}[ D(\uu^{K+1})- D(\uu)]}{\theta_K^2}+\hat n^2\E_{\xi_K}[\|\z^{K+1}-\uu\|_L^2]\\
\leq&\frac{\E_{\xi_K}[d(\uu^{K_0})+H^{K_0}- D(\uu)]}{\theta_{K_0-1}^2}+\hat n^2\E_{\xi_K}[\|\z^{K_0}-\uu\|_L^2]\notag\\
&+\sum_{k=K_0}^K\frac{\E_{\xi_K}\left[\sum_i\sigma_1(\uu_i,\widetilde\z_i^k)+\sigma_2(\uu,\vv^k)\right]}{\theta_k}-\frac{\hat n^2}{2}\sum_{k=K_0}^K\E_{\xi_K}[\|\z^{k+1}-\z^k\|_L^2].\notag
\end{aligned}
\end{eqnarray}
Letting $\uu=\uu^*$, from (\ref{cont30}), $\sigma_1(\uu_i,\widetilde\z_{i}^k)\leq 0$ and $\sigma_2(\uu,\vv_{i}^{k})\leq 0$, we can immediately have (\ref{cont1}), (\ref{cont3}) and (\ref{cont34}).
On the other hand, we also have
\begin{eqnarray}
\begin{aligned}\label{cont2}
&-\sum_{k=K_0}^K\frac{\E_{\xi_K}\left[\sum_i\sigma_1(\uu_i,\widetilde\z_i^k)+\sigma_2(\uu,\vv^k)\right]}{\theta_k}\\
\leq&\frac{\E_{\xi_K}[d(\uu^{K_0})+H^{K_0}- D(\uu)]}{\theta_{K_0-1}^2}+\hat n^2\E_{\xi_K}[\|\z^{K_0}-\uu\|_L^2]-\frac{\E_{\xi_K}[ D(\uu^{K+1})]- D(\uu)}{\theta_K^2}.
\end{aligned}
\end{eqnarray}
From (\ref{pd_relation1}), for any $\uu\in\DS$ we have
\begin{eqnarray}
\begin{aligned}
&-\sum_{k=K_0}^K\frac{\E_{\xi_K}\left[\sum_{i=1}^n\sigma_1(\uu_i,\widetilde\z_i^k)+\sigma_2(\uu,\vv^k)\right]}{\theta_k}\notag\\
=&\<\sum_{k=K_0}^K\frac{\E_{\xi_K}[\A^T\x^*(\vv^k)-n\y^k]}{n\theta_k},\uu_{1:n}\>+\<\sum_{k=K_0}^K\frac{\E_{\xi_K}[\B\x^*(\vv^k)+\b]}{\theta_k},\uu_{n+1:n+p}\>\\
&+\<\left[\sum_{k=K_0}^K\frac{\E_{\xi_K}[g_1(\x^*(\vv^k))]}{\theta_k},\cdots,\sum_{k=K_0}^K\frac{\E_{\xi_K}[g_m(\x^*(\vv^k))]}{\theta_k}\right]^T,\uu_{n+p+1:n+p+m}\>\notag\\
&+\sum_{k=K_0}^K\frac{ D(\uu)}{\theta_k}+\sum_{k=K_0}^K\frac{\E_{\xi_K}[f(\x^*(\vv^k))]}{\theta_k}+\frac{1}{n}\sum_{k=K_0}^K\frac{\E_{\xi_K}[\phi(n\y^k)]}{\theta_k}\notag
\end{aligned}
\end{eqnarray}
\begin{eqnarray}
\begin{aligned}
=&\sum_{k=K_0}^K\frac{1}{\theta_k}\left[\<\frac{\sum_{k=K_0}^K\frac{\E_{\xi_K}[\A^T\x^*(\vv^k)-n\y^k]}{n\theta_k}}{\sum_{k=K_0}^K\frac{1}{\theta_k}},\uu_{1:n}\>+\<\frac{\sum_{k=K_0}^K\frac{\E_{\xi_K}[\B\x^*(\vv^k)+\b]}{\theta_k}}{\sum_{k=K_0}^K\frac{1}{\theta_k}},\uu_{n+1:n+p}\>\right]\notag\\
&+\sum_{k=K_0}^K\frac{1}{\theta_k}\<\left[\frac{\sum_{k=K_0}^K\frac{\E_{\xi_K}[g_1(\x^*(\vv^k))]}{\theta_k}}{\sum_{k=K_0}^K\frac{1}{\theta_k}},\cdots,\frac{\sum_{k=K_0}^K\frac{\E_{\xi_K}[g_m(\x^*(\vv^k))]}{\theta_k}}{\sum_{k=K_0}^K\frac{1}{\theta_k}}\right]^T,\uu_{n+p+1:n+p+m}\>\notag\\
&+\sum_{k=K_0}^K\frac{ D(\uu)}{\theta_k}+\sum_{k=K_0}^K\frac{1}{\theta_k}\frac{\sum_{k=K_0}^K\frac{\E_{\xi_K}[f(\x^*(\vv^k))]}{\theta_k}}{\sum_{k=K_0}^K\frac{1}{\theta_k}} +\frac{1}{n}\sum_{k=K_0}^K\frac{1}{\theta_k}\frac{\sum_{k=K_0}^K\frac{\E_{\xi_K}[\phi(n\y^k)]}{\theta_k}}{\sum_{k=K_0}^K\frac{1}{\theta_k}}\notag\\
\overset{a}\geq&\sum_{k=K_0}^K\frac{1}{\theta_k}\<\frac{1}{n}\E_{\xi_K}[\A^T\hat\x^K-\hat\y^K],\uu_{1:n}\>+\sum_{k=K_0}^K\frac{1}{\theta_k}\<\E_{\xi_K}[\B\hat\x^K+\b],\uu_{n+1:n+p}\>\notag\\
&+\sum_{k=K_0}^K\frac{1}{\theta_k}\<\left[\E_{\xi_K}[g_1(\hat\x^K)],\cdots,\E_{\xi_K}[g_m(\hat\x^K)]\right]^T,\uu_{n+p+1:n+p+m}\>\notag\\
&+\sum_{k=K_0}^K\frac{ D(\uu)}{\theta_k}+\sum_{k=K_0}^K\frac{\E_{\xi_K}[f(\hat\x^K)]}{\theta_k}+\frac{1}{n}\sum_{k=K_0}^K\frac{\E_{\xi_K}[\phi(\hat\y^K)]}{\theta_k}\notag\\
=&\sum_{k=K_0}^K\frac{1}{\theta_k}\left[\<\E_{\xi_K}[\triangle(\hat\x^K,\hat\y^K)],\uu\>+ D(\uu)+\E_{\xi_K}[f(\hat\x^K)]+\frac{1}{n}\E_{\xi_K}[\phi(\hat\y^K)]\right],\notag
\end{aligned}
\end{eqnarray}
where we use the definition of $\hat\x^K$ and $\hat\y^K$, $\uu_{n+p+1:n+p+m}\geq 0$ and Jensen's inequality for $g_i$, $f$ and $\phi_i$ in $\overset{a}\geq$. Thus, from (\ref{cont2}) and the second property in Lemma \ref{theta_lemma}, we have
\begin{eqnarray}
\begin{aligned}
&\left(\frac{1}{\theta_K^2}-\frac{1}{\theta_{K_0-1}^2}\right)\E_{\xi_K}\left[\<\triangle(\hat\x^K,\hat\y^K),\uu\>+f(\hat\x^K)+\frac{1}{n}\phi(\hat\y^K)\right]\notag\\
\leq& \frac{\E_{\xi_K}[d(\uu^{K_0})+H^{K_0}]}{\theta_{K_0-1}^2}+\hat n^2\E_{\xi_K}[\|\z^{K_0}-\uu\|_L^2]-\frac{\E_{\xi_K}[ D(\uu^{K+1})]}{\theta_K^2}.\notag
\end{aligned}
\end{eqnarray}
where we eliminate $D(\uu)$ from both sides. Adding $\left(\frac{1}{\theta_K^2}-\frac{1}{\theta_{K_0-1}^2}\right) D(\uu^*)$ to both sides and using $\|\z^{K_0}-\uu\|_L^2\leq 2\|\z^{K_0}-\uu^*\|_L^2+2\|\uu-\uu^*\|_L^2$, we have
\begin{eqnarray}
\begin{aligned}
&\left(\frac{1}{\theta_K^2}-\frac{1}{\theta_{K_0-1}^2}\right)\E_{\xi_K}\left[\<\triangle(\hat\x^K,\hat\y^K),\uu\>+ D(\uu^*)+f(\hat\x^K)+\frac{1}{n}\phi(\hat\y^K)\right]\notag\\
\leq& \E_{\xi_K}\hspace*{-0.05cm}\left[\frac{d(\uu^{K_0})\hspace*{-0.05cm}+\hspace*{-0.05cm}H^{K_0}\hspace*{-0.05cm}-\hspace*{-0.05cm} D(\uu^*)}{\theta_{K_0-1}^2}\hspace*{-0.05cm}+\hspace*{-0.05cm}2\hat n^2\|\z^{K_0}\hspace*{-0.05cm}-\hspace*{-0.05cm}\uu^*\|_L^2\hspace*{-0.05cm}+\hspace*{-0.05cm}2\hat n^2\|\uu\hspace*{-0.05cm}-\hspace*{-0.05cm}\uu^*\|_L^2\right]\hspace*{-0.05cm}-\hspace*{-0.05cm}\frac{\E_{\xi_K}[ D(\uu^{K+1})]\hspace*{-0.05cm}-\hspace*{-0.05cm} D(\uu^*)}{\theta_K^2}\notag\\
\overset{b}\leq&2(\hat n^2-\hat n)\left( D(\uu^0)- D(\uu^*)\right)+2\hat n^2\|\z^0-\uu^*\|_L^2+2\hat n^2\|\uu-\uu^*\|_L^2,\notag
\end{aligned}
\end{eqnarray}
where we use $d(\uu^{K_0})+H^{K_0}\geq  D(\uu^{K_0})\geq D(\uu^*)$ and (\ref{cont30}) in $\overset{b}\leq$.
\end{proof}

Now we are ready to prove the convergence rate of the primal solution. We first consider the primal objective function of problem (\ref{problem3}) in the following lemma.
\begin{lemma}\label{lemma1}
Suppose Assumption \ref{assumption} holds. Define $\hat\x^K=\frac{\sum_{k=K_0}^K\frac{\x^*(\vv^k)}{\theta_k}}{\sum_{k=K_0}^K\frac{1}{\theta_k}}$ and $\hat\y^K=\frac{\sum_{k=K_0}^K\frac{n\y^k}{\theta_k}}{\sum_{k=K_0}^K\frac{1}{\theta_k}}$. Then we have
\begin{eqnarray}
\begin{aligned}
&\left|\E_{\xi_K}[f(\hat\x^K)]+\frac{1}{n}\E_{\xi_K}[\phi(\hat\y^K)]-f(\x^*)-\frac{1}{n}\phi(\y^*)\right|\notag\\
\leq&\frac{2\hat n^2\hspace*{-0.05cm}\left((1\hspace*{-0.05cm}-\hspace*{-0.05cm}\theta_0)\hspace*{-0.05cm}\left(D(\uu^0)\hspace*{-0.05cm}-\hspace*{-0.05cm} D(\uu^*)\right)\hspace*{-0.05cm}+\hspace*{-0.05cm}\|\uu^0\hspace*{-0.05cm}-\hspace*{-0.05cm}\uu^*\|_L^2\hspace*{-0.05cm}+\hspace*{-0.05cm}\|\uu^*\|_L^2\right)}{\frac{1}{\theta_K^2}-\frac{1}{\theta_{K_0-1}^2}}\hspace*{-0.05cm}+\hspace*{-0.05cm}\sqrt{\E_{\xi_K}\hspace*{-0.05cm}\left[\left(\|\widetilde\triangle(\hat\x^K,\hat\y^K)\|_L^*\right)^2\right]}\|\uu^*\|_L,\notag
\end{aligned}
\end{eqnarray}
where
\begin{eqnarray}
\widetilde\triangle(\x,\y)=\left[(\A^T\x-\y)^T/n,(\B\x+\b)^T,\max\{0,g_1(\x)\},\cdots,\max\{0,g_m(\x)\}\right]^T.\notag
\end{eqnarray}
\end{lemma}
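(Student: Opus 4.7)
The plan is to establish two-sided control of $\E_{\xi_K}[f(\hat\x^K)+\frac{1}{n}\phi(\hat\y^K)]-f(\x^*)-\frac{1}{n}\phi(\y^*)$ by playing the master inequality \eqref{cont14} of Lemma \ref{lemma0} against the saddle-point property of the Lagrangian \eqref{lagrangian} at the primal–dual optimum. The key identity I will exploit throughout is $D(\uu^*)=-f(\x^*)-\frac{1}{n}\phi(\y^*)$, which holds by strong duality under Assumption \ref{assumption}.

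For the upper bound, I will specialize \eqref{cont14} to $\uu=\0\in\DS$ (this is feasible because the nonnegativity coordinates of $\0$ trivially lie in $\R_+^m$). This choice annihilates the linear term $\<\triangle(\hat\x^K,\hat\y^K),\uu\>$, and since $\|\uu-\uu^*\|_L^2=\|\uu^*\|_L^2$ and $\hat n^2-\hat n=\hat n^2(1-\theta_0)$, dividing by $\frac{1}{\theta_K^2}-\frac{1}{\theta_{K_0-1}^2}$ directly yields the claimed one-sided inequality
\[
\E_{\xi_K}\bigl[f(\hat\x^K)+\tfrac{1}{n}\phi(\hat\y^K)\bigr]-f(\x^*)-\tfrac{1}{n}\phi(\y^*)\;\leq\; \frac{2\hat n^2\bigl((1-\theta_0)(D(\uu^0)-D(\uu^*))+\|\uu^0-\uu^*\|_L^2+\|\uu^*\|_L^2\bigr)}{\frac{1}{\theta_K^2}-\frac{1}{\theta_{K_0-1}^2}}.
\]

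For the lower bound, I will appeal to the saddle-point inequality for $L_F$: by KKT and strong duality, $L_F(\x^*,\y^*,\uu^*)=f(\x^*)+\frac{1}{n}\phi(\y^*)$, and $L_F(\x,\y,\uu^*)\ge L_F(\x^*,\y^*,\uu^*)$ for every $\x,\y$. Evaluating at $(\hat\x^K,\hat\y^K)$, rearranging, and taking $\E_{\xi_K}$ gives
\[
f(\x^*)+\tfrac{1}{n}\phi(\y^*)-\E_{\xi_K}\bigl[f(\hat\x^K)+\tfrac{1}{n}\phi(\hat\y^K)\bigr]\;\leq\;\E_{\xi_K}\bigl[\<\triangle(\hat\x^K,\hat\y^K),\uu^*\>\bigr].
\]
Because $\uu^*_{n+p+i}\ge 0$ for the inequality-constraint coordinates, the linear functional $\<\cdot,\uu^*\>$ only grows when $g_i(\hat\x^K)$ is replaced by $\max\{0,g_i(\hat\x^K)\}$, so $\<\triangle(\hat\x^K,\hat\y^K),\uu^*\>\leq\<\widetilde\triangle(\hat\x^K,\hat\y^K),\uu^*\>$. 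Hölder's inequality in the weighted norms then gives $\<\widetilde\triangle(\hat\x^K,\hat\y^K),\uu^*\>\leq \|\widetilde\triangle(\hat\x^K,\hat\y^K)\|_L^*\,\|\uu^*\|_L$, and Jensen's inequality for the concave square root upgrades $\E[\|\widetilde\triangle\|_L^*]$ to $\sqrt{\E[(\|\widetilde\triangle\|_L^*)^2]}$, producing exactly the second term in the stated bound.

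The main obstacle I anticipate is the sign bookkeeping when passing from $\triangle$ (the natural object coming out of \eqref{pd_relation1}) to the truncated $\widetilde\triangle$ (the natural object for constraint violation). This swap is safe only because $\uu^*$ has nonnegative Lagrange multipliers on the inequality block, and it is essential for obtaining a one-sided bound rather than a duality-gap-type estimate. Once this is handled, combining the upper bound with the lower bound and taking absolute values yields the lemma; no averaging, randomness, or coordinate-descent-specific arguments are needed beyond what \eqref{cont14} already supplies.
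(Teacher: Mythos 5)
Your proposal is correct, and the lower-bound half (saddle-point inequality for $L_F$ at $\uu^*$, the replacement of $\triangle$ by $\widetilde\triangle$ using $\uu^*_{n+p+1:n+p+m}\geq\0$, H\"older in the weighted norms, and $\E[a]\leq\sqrt{\E[a^2]}$) coincides with the paper's derivation of (\ref{cont48}). Where you genuinely diverge is the upper bound. The paper does not set $\uu=\0$; it constructs a truncated multiplier $\hat\uu^*$ that agrees with $\uu^*$ except on the inequality-constraint coordinates where $\E_{\xi_K}[g_i(\hat\x^K)]<0$ (set to zero there), verifies $\|\hat\uu^*\|_L\leq\|\uu^*\|_L$, $\|\hat\uu^*-\uu^*\|_L\leq\|\uu^*\|_L$ and independence from $\xi_K$, plugs $\uu=\hat\uu^*$ into (\ref{cont14}), and then must still bound $\<\E_{\xi_K}[\triangle(\hat\x^K,\hat\y^K)],\hat\uu^*\>$ from below via the chain in (\ref{cont33}), which is exactly what injects the extra $\sqrt{\E_{\xi_K}[(\|\widetilde\triangle\|_L^*)^2]}\,\|\uu^*\|_L$ term into the paper's upper bound. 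Your choice $\uu=\0\in\DS$ (deterministic, hence admissible in (\ref{cont14})) annihilates the linear term outright, gives $\|\uu-\uu^*\|_L^2=\|\uu^*\|_L^2$ just as the paper's choice does, and so yields an upper bound consisting of the first term alone --- strictly tighter than, and a fortiori implying, the stated one-sided inequality. The only thing the paper's more elaborate route buys is nothing needed for this lemma; your shortcut avoids the delicate construction of $\hat\uu^*$ and the $\max\{0,\cdot\}$ sign bookkeeping on the upper side entirely, while the final two-sided statement follows from combining your (tighter) upper bound with the common lower bound exactly as you describe.
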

\begin{proof}
Define $\hat\uu^*_i= \left\{
  \begin{array}{ll}
    \uu_i^*, & \mbox{if } i\leq n+p,\\
    \uu_i^*, & \mbox{if } i>n+p\mbox{ and }\E_{\xi_K}[g_i(\hat\x^K)]\geq 0,\\
    0,      & \mbox{if }  i>n+p\mbox{ and }\E_{\xi_K}[g_i(\hat\x^K)]< 0.
  \end{array}
\right.$ Since $\uu_{n+p+1:n+p+m}^*\geq \0$, then $\hat\uu_{n+p+1:n+p+m}^*\geq \0$. We also have $\|\hat\uu^*\|_L\leq\|\uu^*\|_L$ and $\|\hat\uu^*-\uu^*\|_L\leq\|\uu^*\|_L$. Moreover, $\hat\uu^*$ is independent on $\xi_K$ since we use $\E_{\xi_K}[g_i(\hat\x^K)]$ in the definition, rather than $g_i(\hat\x^K)$. So we can let $\uu=\hat\uu^*$ in (\ref{cont14}). Define
$\triangle_E(\x,\y)=\left[
\begin{array}{c}
\E_{\xi_K}[(\A^T\x-\y)/n]\\
\E_{\xi_K}[\B\x+\b]\\
\max\{\0,\E_{\xi_K}[g_{1:m}(\x)]\}
\end{array}\right]$, then we have
\begin{eqnarray}
\begin{aligned}\label{cont33}
&\<\E_{\xi_K}[\triangle(\hat\x^K,\hat\y^K)],\hat\uu^*\>\overset{a}=\<\triangle_E(\hat\x^K,\hat\y^K),\hat\uu^*\>\geq-\|\triangle_E(\hat\x^K,\hat\y^K)\|_L^*\|\hat\uu^*\|_L\\
&\overset{b}\geq-\|\E_{\xi_K}[\widetilde\triangle(\hat\x^K,\hat\y^K)]\|_L^*\|\hat\uu^*\|_L\overset{c}\geq -\sqrt{\E_{\xi_K}\left[\left(\|\widetilde\triangle(\hat\x^K,\hat\y^K)\|_L^*\right)^2\right]}\|\hat\uu^*\|_L,
\end{aligned}
\end{eqnarray}
where we use $\hat\uu_{n+p+i}^*=0$ if $\E_{\xi_K}[g_i(\hat\x^K)]<0$ in $\overset{a}=$, $\max\{0,\E_{\xi_K}[a]\}\leq \E_{\xi_K}[\max\{0,a\}]$ in $\overset{b}\geq$ and $(\E[a])^2\leq \E [a^2]$ in $\overset{c}\geq$. Thus, letting $\uu=\hat\uu^*$ in (\ref{cont14}), we have
\begin{eqnarray}
\begin{aligned}
&\E_{\xi_K}[f(\hat\x^K)]+\frac{1}{n}\E_{\xi_K}[\phi(\hat\y^K)]-f(\x^*)-\frac{1}{n}\phi(\y^*)\notag\\
=&\E_{\xi_K}[f(\hat\x^K)]+\frac{1}{n}\E_{\xi_K}[\phi(\hat\y^K)]+ D(\uu^*)\notag\\
\leq&\frac{2\hat n^2\hspace*{-0.05cm}\left( (1\hspace*{-0.05cm}-\hspace*{-0.05cm}\theta_0)\hspace*{-0.05cm}\left(D(\uu^0)\hspace*{-0.05cm}-\hspace*{-0.05cm} D(\uu^*)\right)\hspace*{-0.05cm}+\hspace*{-0.05cm}\|\uu^0\hspace*{-0.05cm}-\hspace*{-0.05cm}\uu^*\|_L^2\hspace*{-0.05cm}+\hspace*{-0.05cm}\|\uu^*\|_L^2\right)}{\frac{1}{\theta_K^2}-\frac{1}{\theta_{K_0-1}^2}}\hspace*{-0.05cm}+\hspace*{-0.05cm}\sqrt{\E_{\xi_K}\hspace*{-0.05cm}\left[\left(\|\widetilde\triangle(\hat\x^K,\hat\y^K)\|_L^*\right)^2\right]}\|\uu^*\|_L.\notag
\end{aligned}
\end{eqnarray}
On the other hand, since $(\x^*,\y^*,\uu^*)$ is a KKT point, we have
\begin{eqnarray}
L_F(\hat\x^K,\hat\y^K,\uu^*)\geq L_F(\x^*,\y^*,\uu^*)=f(\x^*)+\frac{1}{n}\phi(\y^*).\notag
\end{eqnarray}
From the definition in (\ref{lagrangian}), we have
\begin{eqnarray}
\begin{aligned}
&f(\hat\x^K)+\frac{1}{n}\phi(\hat\y^K)-f(\x^*)-\frac{1}{n}\phi(\y^*)\notag\\
\geq& -\frac{1}{n}\<\uu_{1:n}^*,\A^T\hat\x^K-\hat\y^K\>-\<\uu_{n+1:n+p}^*,\B\hat\x^K+\b\>-\sum_{i=1}^m \uu_{n+p+i}^* g_i(\hat\x^K)\notag\\
\overset{d}\geq& -\frac{1}{n}\<\uu_{1:n}^*,\A^T\hat\x^K-\hat\y^K\>-\<\uu_{n+1:n+p}^*,\B\hat\x^K+\b\>-\sum_{i=1}^m \uu_{n+p+i}^* \max\{0,g_i(\hat\x^K)\}\notag\\
\geq&-\|\widetilde\triangle(\hat\x^K,\hat\y^K)\|_L^*\|\uu^*\|_L,\notag
\end{aligned}
\end{eqnarray}
where we use $\uu^*_{n+p+1:n+p+m}\geq 0$ in $\overset{d}\geq$. So we have
\begin{eqnarray}
\begin{aligned}\label{cont48}
&\E_{\xi_K}[f(\hat\x^K)]+\frac{1}{n}\E_{\xi_K}[\phi(\hat\y^K)]-f(\x^*)-\frac{1}{n}\phi(\y^*)\\
\geq&-\E_{\xi_K}\left[\|\widetilde\triangle(\hat\x^K,\hat\y^K)\|_L^*\right]\|\uu^*\|_L\geq-\sqrt{\E_{\xi_K}\left[\left(\|\widetilde\triangle(\hat\x^K,\hat\y^K)\|_L^*\right)^2\right]}\|\uu^*\|_L,
\end{aligned}
\end{eqnarray}
which completes the proof.
\end{proof}

\subsubsection{Constraint Functions}

Lemma \ref{lemma2} establishes the convergence rate for the constraint functions of problem (\ref{problem3}). A straightforward extension of (\ref{cont53}) only leads to $\|\E_{\xi_K}[\widetilde\triangle(\hat\x^K,\hat\y^K)]\|_L^*\leq O\left(\frac{1}{K^2}\right)$, which is less interesting since the expectation is inside the norm. We should take the expectation outside the norm, i.e., $\E_{\xi_K}[\|\widetilde\triangle(\hat\x^K,\hat\y^K)\|_L^*]\leq O\left(\frac{1}{K^2}\right)$. The later one cannot be attained by the simple techniques in (\ref{cont53}) and requires more skillful tricks. The proof sketch of Lemma \ref{lemma2} is that we first establish a recursion (\ref{cont18}) and then using (\ref{cont1}) and the definition of $\s^k$ in (\ref{cont11}) to bound the constraint functions.
\begin{lemma}\label{lemma2}
Suppose Assumption \ref{assumption} holds. Define $\hat\x^K=\frac{\sum_{k=K_0}^K\frac{\x^*(\vv^k)}{\theta_k}}{\sum_{k=K_0}^K\frac{1}{\theta_k}}$ and $\hat\y^K=\frac{\sum_{k=K_0}^K\frac{n\y^k}{\theta_k}}{\sum_{k=K_0}^K\frac{1}{\theta_k}}$. Then we have
\begin{eqnarray}
\begin{aligned}\label{cont35}
\sqrt{\E_{\xi_K}\left[\left(\|\widetilde\triangle(\hat\x^K,\hat\y^K)\|_L^*\right)^2\right]}\leq \frac{\sqrt{48}\hat n^2\sqrt{(1-\theta_0)\left( D(\uu^0)- D(\uu^*)\right)+\|\uu^0-\uu^*\|_L^2}}{\frac{1}{\theta_K^2}-\frac{1}{\theta_{K_0-1}^2}}.
\end{aligned}
\end{eqnarray}
\end{lemma}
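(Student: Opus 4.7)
The plan begins with a componentwise reduction to a quantity natively related to the coordinate differences $\widetilde\z^k-\z^k$. For the equality blocks $i\le n+p$, linearity in $\x,\y$ turns each component of $\widetilde\triangle(\hat\x^K,\hat\y^K)$ into a $\tfrac{1}{\theta_k}$-weighted sum of per-iterate residuals. For the inequality block $i>n+p$, I would apply Jensen's inequality to the convex function $g_i$ alone (not to the composite $\max\{0,g_i(\cdot)\}$); this keeps a plain signed sum inside the outer $\max\{0,\cdot\}$, a point that turns out to be essential below. The optimality of the coordinate subproblem defining $\widetilde\z_i^k$ in Algorithm \ref{alg_arpcg_equ}, combined with the definition (\ref{def_y}) of $\y_i^k$ for $i\le n$, the absence of a proximal term for $n<i\le n+p$, and a short case analysis of the projection onto $\R_+$ for $i>n+p$, then yields the uniform pointwise bound $\widetilde\triangle(\x^*(\vv^k),n\y^k)_i/\theta_k\le 2\hat n L_i(\widetilde\z_i^k-\z_i^k)$ (with equality in absolute value when $i\le n+p$). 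Squaring, summing against the $1/L_i$ weights of $\|\cdot\|_L^*$, and invoking Lemma \ref{theta_lemma}.2 gives
$$\bigl(\|\widetilde\triangle(\hat\x^K,\hat\y^K)\|_L^*\bigr)^2 \;\le\; \frac{4\hat n^2}{\bigl(1/\theta_K^2-1/\theta_{K_0-1}^2\bigr)^2}\left\|\sum_{k=K_0}^K(\widetilde\z^k-\z^k)\right\|_L^2.$$

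It remains to bound $\E_{\xi_K}\bigl[\|\sum_{k=K_0}^K(\widetilde\z^k-\z^k)\|_L^2\bigr]$. The critical identity is $\E_{i_k\mid\xi_{k-1}}[\z^{k+1}-\z^k]=\tfrac{1}{\hat n}(\widetilde\z^k-\z^k)$, which suggests the decomposition
$$\sum_{k=K_0}^K(\widetilde\z^k-\z^k)\;=\;\hat n(\z^{K+1}-\z^{K_0})+\s^K,\qquad \s^K:=\sum_{k=K_0}^K\bigl[(\widetilde\z^k-\z^k)-\hat n(\z^{k+1}-\z^k)\bigr],$$
in which $\s^K$ is a martingale with zero-mean increments under the filtration $\sigma(\xi_k)$. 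Applying $\|a+b\|_L^2\le 2\|a\|_L^2+2\|b\|_L^2$ and the orthogonality of martingale increments in the $L$-weighted inner product yields
$$\E_{\xi_K}\!\left[\left\|\sum_{k=K_0}^K(\widetilde\z^k-\z^k)\right\|_L^2\right] \le 2\hat n^2\,\E_{\xi_K}[\|\z^{K+1}-\z^{K_0}\|_L^2]+2\sum_{k=K_0}^K\E_{\xi_K}\bigl[\|(\widetilde\z^k-\z^k)-\hat n(\z^{k+1}-\z^k)\|_L^2\bigr].$$

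Both right-hand terms are controlled by Lemma \ref{lemma0}. The triangle inequality plus two applications of (\ref{cont3}) give $\E_{\xi_K}[\|\z^{K+1}-\z^{K_0}\|_L^2]\le 4C$ with $C:=(1-\theta_0)(D(\uu^0)-D(\uu^*))+\|\uu^0-\uu^*\|_L^2$. For the variance term, a direct coordinatewise calculation (using $\z_j^{k+1}=\z_j^k$ for $j\ne i_k$) evaluates the conditional variance to $(\hat n-1)\|\widetilde\z^k-\z^k\|_L^2=\hat n(\hat n-1)\,\E_{i_k\mid\xi_{k-1}}[\|\z^{k+1}-\z^k\|_L^2]$; summing over $k$ and invoking (\ref{cont1}) bounds it by $2\hat n^2C$. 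Combining yields $\E_{\xi_K}[\|\sum_{k=K_0}^K(\widetilde\z^k-\z^k)\|_L^2]\le 12\hat n^2 C$, and substituting back into the first display and taking square roots produces the constant $\sqrt{48}\,\hat n^2$ and the claimed estimate (\ref{cont35}).

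The main obstacle is precisely this last step—estimating $\E[\|\cdot\|_L^2]$ (expectation \emph{outside} the squared norm) rather than the easier $\|\E[\cdot]\|_L^2$, which is what the naive telescoping in the style of (\ref{cont53}) would yield and which, as noted in the preamble to the lemma, is ``less interesting since the expectation is inside the norm''. The martingale trick converts the squared norm of a sum into a sum of conditional variances, each a constant multiple of $\E_{i_k\mid\xi_{k-1}}[\|\z^{k+1}-\z^k\|_L^2]$—the very quantity whose cumulative sum is controlled by (\ref{cont1}). That is the reason Lemma \ref{lemma3} was stated while retaining the extra term $\tfrac{\hat n^2\theta_k^2}{2}\|\z^{k+1}-\z^k\|_L^2$ on its left-hand side: without it, the martingale's quadratic variation could not be bounded, and the outside-the-norm $O(1/K^2)$ rate for the constraint violation would collapse.
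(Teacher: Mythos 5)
Your proof is correct and follows essentially the same route as the paper's: the same martingale decomposition of $\sum_{k=K_0}^K(\widetilde\z^k-\z^k)$ into a zero-mean part plus the telescoped $\hat n(\z^{K+1}-\z^{K_0})$, the same use of (\ref{cont1}) to control the quadratic variation and of (\ref{cont3})/(\ref{cont30}) for the endpoint terms, and the same Jensen-on-$g_i$-alone treatment of the inequality block (note your ``pointwise bound'' for $i>n+p$ should be stated for the signed residual $g_i(\x^*(\vv^k))\le \pi_i^k$, with $\max\{0,\cdot\}$ applied only after aggregation, exactly as your preceding sentence indicates), arriving at the identical constant $\sqrt{48}\hat n^2$. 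The only cosmetic differences are that the paper phrases the martingale via $\g_i^k=\frac{\pi_i^k}{2\hat n^2\theta_kL_i}+\z_i^k-\z_i^{k+1}$ and the dual norm of $\sum_k\pi^k/(2\hat n^2\theta_k)$, and combines the two pieces with the lower bound $(a+b+c)^2\geq\frac{1}{3}a^2-b^2-c^2$ instead of your $\|a+b\|_L^2\leq 2\|a\|_L^2+2\|b\|_L^2$.
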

\begin{proof}
From the update of $\widetilde\z^k$ and the definitions of $\y^k$ and $\nabla d(\vv^k)$ in (\ref{def_y}) and (\ref{gradient}), we have
\begin{eqnarray}
&&\widetilde\z^k_{i}=\z_{i}^k-\frac{\nabla_i d(\vv^k)+\y_i^k}{2\hat n\theta_k L_i}=\z_{i}^k-\frac{(-\A_i^T\x^*(\vv^k)+n\y_i^k)/n}{2\hat n\theta_k L_i},i\leq n,\label{cont41}\\
&&\widetilde\z^k_{i}=\z^k_{i}-\frac{\nabla d_{i}(\vv^k)}{2\hat n\theta_k L_i}=\z^k_{i}+\frac{\B_{i,:}^T\x^*(\vv^k)+\b_i}{2\hat n\theta_k L_i},n<i\leq n+p,\notag\\
&&\widetilde\z^k_{i}=\left[\z^k_{i}-\frac{\nabla d_{i}(\vv^k)}{2\hat n\theta_k L_i}\right]_+=\z_i^k+\frac{\max\left\{g_i(\x^*(\vv^k)),-2\hat n\theta_k L_i\z_i^k\right\}}{2\hat n\theta_k L_i},i>n+p.\notag
\end{eqnarray}
Define $\pi^k\in\R^{\hat n}$ such that $\pi_i^k=\left\{
  \begin{array}{ll}
    \frac{\A_i^T\x^*(\vv^k)-n\y_i^k}{n}, & i\leq n,\\
    \B_{i,:}^T\x^*(\vv^k)+\b_i, & n<i\leq n+p,\\
    \max\left\{g_i(\x^*(\vv^k)),-2\hat n\theta_k L_i\z_i^k\right\}, & i>n+p,
  \end{array}
\right.$ then we have
\begin{eqnarray}
&\widetilde\z_i^k-\z_i^k= \frac{\pi_i^k}{2\hat n\theta_k L_i}\label{cont10}
\end{eqnarray}
and
\begin{eqnarray}
&\E_{i_k|\xi_{k-1}}[\z_i^{k+1}]=\frac{1}{\hat n}\widetilde\z_i^k+(1-\frac{1}{\hat n})\z_i^k=\frac{1}{\hat n}\left(\z_i^k+\frac{\pi_i^k}{2\hat n\theta_k L_i}\right)+(1-\frac{1}{\hat n})\z_i^k=\z_i^k+\frac{\pi_i^k}{2\hat n^2\theta_k L_i}.\notag
\end{eqnarray}
Define $\g^k\in\R^{\hat n}$ and $\s^k\in\R^{\hat n}$ such that
\begin{eqnarray}
\g_i^k=\frac{\pi_i^k}{2\hat n^2\theta_kL_i}+\z_i^k-\z_i^{k+1} \quad\mbox{ and }\quad \s_i^k=\sum_{t=K_0}^k\g_i^t\mbox{ (specially, } \s_i^k=0,k<K_0),\label{cont11}
\end{eqnarray}
then we get $\E_{i_k|\xi_{k-1}}[\g_i^k]= 0$ and $\E_{i_k|\xi_{k-1}}[\s_i^k]= \s_i^{k-1}$. Moreover, for $k\geq K_0$, we have
\begin{eqnarray}
\begin{aligned}
\E_{i_k|\xi_{k-1}}[\|\g^k\|_L^2]\overset{a}=&\E_{i_k|\xi_{k-1}}\left[\left\|\frac{1}{\hat n}(\widetilde\z^k-\z^k)+\z^k-\z^{k+1}\right\|_L^2\right]\notag\\
=&\frac{1}{\hat n}\sum_{i=1}^{\hat n}\left[ L_{i}\left\|\frac{1}{\hat n}(\widetilde\z_{i}^k-\z_{i}^k)+\z_{i}^k-\widetilde\z_{i}^k\right\|^2+\sum_{j\neq i}L_j\left\|\frac{1}{\hat n}(\widetilde\z_j^k-\z_j^k)+\z_j^k-\z_j^k\right\|^2 \right]\notag\\
=&\left(\frac{1}{\hat n}\left(1-\frac{1}{\hat n}\right)^2+\frac{\hat n-1}{\hat n^3}\right)\sum_{i=1}^{\hat n}L_i\|\widetilde\z_i^k-\z_i^k\|^2\notag\\
\leq&\frac{1}{\hat n}\sum_{i=1}^{\hat n}L_i\|\widetilde\z_i^k-\z_i^k\|^2\overset{b}=\E_{i_k|\xi_{k-1}}[\|\z^{k+1}-\z^k\|_L^2],\notag
\end{aligned}
\end{eqnarray}
where we use (\ref{cont10}) and (\ref{cont11}) in $\overset{a}=$ and (\ref{cont61}) in $\overset{b}=$. Then for any $k\geq K_0$, we have
\begin{eqnarray}
\begin{aligned}\label{cont18}
\E_{\xi_{k}}[\|\s^k\|_L^2]=&\E_{\xi_{k-1}}\left[\E_{i_k|\xi_{k-1}}[\|\s^k\|_L^2]\right]\\
=&\E_{\xi_{k-1}}\left[ \E_{i_k|\xi_{k-1}}\left[\|\s^k-\E_{i_k|\xi_{k-1}}[\s^k]+\E_{i_k|\xi_{k-1}}[\s^k]\|_L^2\right] \right]\\
=&\E_{\xi_{k-1}}\left[ \E_{i_k|\xi_{k-1}}\left[\|\s^k-\E_{i_k|\xi_{k-1}}[\s^k]\|_L^2\right]+\|\E_{i_k|\xi_{k-1}}[\s^k]\|_L^2 \right]\\
=&\E_{\xi_{k-1}}\left[ \E_{i_k|\xi_{k-1}}[\|\g^k\|_L^2]+\|\s^{k-1}\|_L^2 \right]\\
\leq&\E_{\xi_k}[\|\z^{k+1}-\z^k\|_L^2]+\E_{\xi_{k-1}}[\|\s^{k-1}\|_L^2].
\end{aligned}
\end{eqnarray}
Summing over $k=K_0,K_0+1,\cdots,K$ and using $\s^{K_0-1}=\0$, we have
\begin{eqnarray}
\begin{aligned}\label{cont19}
\E_{\xi_{K}}\left[\|\s^K\|_L^2\right]\leq&\sum_{k=K_0}^K\E_{\xi_k}[\|\z^{k+1}-\z^k\|_L^2]=\sum_{k=K_0}^K\E_{\xi_{K}}[\|\z^{k+1}-\z^k\|_L^2]\\
\overset{a}\leq&2(1-\theta_0)\left( D(\uu^0)- D(\uu^*)\right)+2\|\uu^0-\uu^*\|_L^2,
\end{aligned}
\end{eqnarray}
where we use (\ref{cont1}) in $\overset{a}\leq$. On the other hand, from the definition of $\s^K$ and $\g^k$, we have
\begin{eqnarray}
\begin{aligned}
&\|\s^K\|_L^2=\left\|\sum_{k=K_0}^K\g^k\right\|_L^2=\sum_iL_i\left\|\sum_{k=K_0}^K\left(\frac{\pi_i^k}{2\hat n^2\theta_k L_i}+\z_i^k-\z_i^{k+1}\right)\right\|^2\notag\\
&=\sum_i\hspace*{-0.08cm}L_i\hspace*{-0.08cm}\left\|\sum_{k=K_0}^K\hspace*{-0.08cm}\frac{\pi_i^k}{2\hat n^2\theta_k L_i}\hspace*{-0.08cm}+\hspace*{-0.08cm}\z_i^{K_0}\hspace*{-0.08cm}-\hspace*{-0.08cm}\z_i^{K+1}\right\|^2\hspace*{-0.08cm}\geq\hspace*{-0.08cm} \frac{1}{3}\hspace*{-0.08cm}\left(\left\|\sum_{k=K_0}^K\frac{\pi^k}{2\hat n^2\theta_k}\right\|_L^*\right)^2\hspace*{-0.08cm}-\hspace*{-0.08cm}\|\z^{K_0}\hspace*{-0.08cm}-\hspace*{-0.08cm}\uu^*\|_L^2\hspace*{-0.08cm}-\hspace*{-0.08cm}\|\z^{K+1}\hspace*{-0.08cm}-\hspace*{-0.08cm}\uu^*\|_L^2. \notag
\end{aligned}
\end{eqnarray}
So from (\ref{cont19}), (\ref{cont3}) and (\ref{cont30}), we have
\begin{eqnarray}\
\begin{aligned}\label{cont32}
&\E_{\xi_K}\hspace*{-0.09cm}\left[\hspace*{-0.09cm}\left(\left\|\sum_{k=K_0}^K\frac{\pi^k}{2\hat n^2\theta_k}\right\|_L^*\right)^2\right]\leq 3\E_{\xi_K}\hspace*{-0.09cm}\left[\|\s^K\|_L^2\right]\hspace*{-0.09cm}+\hspace*{-0.09cm}3\E_{\xi_K}\hspace*{-0.09cm}\left[\|\z^{K+1}\hspace*{-0.09cm}-\hspace*{-0.09cm}\uu^*\|_L^2\right]\hspace*{-0.09cm}+\hspace*{-0.09cm}3\E_{\xi_K}\hspace*{-0.09cm}\left[\|\z^{K_0}\hspace*{-0.09cm}-\hspace*{-0.09cm}\uu^*\|_L^2\right]\\
&\leq 12(1-\theta_0)\left( D(\uu^0)\hspace*{-0.09cm}-\hspace*{-0.09cm} D(\uu^*)\right)\hspace*{-0.09cm}+\hspace*{-0.09cm}12\|\uu^0\hspace*{-0.09cm}-\hspace*{-0.09cm}\uu^*\|_L^2.
\end{aligned}
\end{eqnarray}
For $i\leq n$, we have
\begin{eqnarray}
\begin{aligned}\label{cont20}
&\sum_{k=K_0}^K\frac{\pi_i^k}{2\hat n^2\theta_k}=\frac{1}{2\hat n^2}\sum_{k=K_0}^K\frac{(\A_i^T\x^*(\vv^k)-n\y_i^k)/n}{\theta_k}\\
&=\frac{\sum_{k=K_0}^K\frac{1}{\theta_k}}{2\hat n^2}\frac{\sum_{k=K_0}^K\frac{\A_i^T\x^*(\vv^k)-n\y_i^k}{n\theta_k}}{\sum_{k=K_0}^K\frac{1}{\theta_k}}=\frac{\frac{1}{\theta_K^2}-\frac{1}{\theta_{K_0-1}^2}}{2\hat n^2}(\A_i^T\hat\x^K-\hat\y_i^K)/n.
\end{aligned}
\end{eqnarray}
Similarly, for $n<i\leq n+p$, we have
\begin{eqnarray}
&&\sum_{k=K_0}^K\frac{\pi_i^k}{2\hat n^2\theta_k}=\frac{\frac{1}{\theta_K^2}-\frac{1}{\theta_{K_0-1}^2}}{2\hat n^2}(\B_{i,:}^T\hat\x^K+\b_i),\notag
\end{eqnarray}
and for $n+p< i\leq n+p+m$, we have
\begin{eqnarray}
\begin{aligned}
&\sum_{k=K_0}^K\frac{\pi_i^k}{2\hat n^2\theta_k}\geq\frac{1}{2\hat n^2}\sum_{k=K_0}^K\frac{g_i(\x^*(\vv^k))}{\theta_k}\geq\frac{\sum_{k=K_0}^K\frac{1}{\theta_k}}{2\hat n^2}g_i(\hat\x^K)=\frac{\frac{1}{\theta_K^2}-\frac{1}{\theta_{K_0-1}^2}}{2\hat n^2}g_i(\hat\x^K)\notag\\
\Rightarrow&\left(\sum_{k=K_0}^K\frac{\pi_i^k}{2\hat n^2\theta_k}\right)^2\geq \left(\frac{1}{\theta_K^2}-\frac{1}{\theta_{K_0-1}^2}\right)^2\frac{\left( \max\{0,g_i(\hat\x^K)\}\right)^2}{4\hat n^4}.\notag
\end{aligned}
\end{eqnarray}
Then we have
\begin{eqnarray}
\frac{1}{4\hat n^4}\left(\frac{1}{\theta_K^2}-\frac{1}{\theta_{K_0-1}^2}\right)^2\left(\|\widetilde\triangle(\hat\x^K,\hat\y^K)\|_L^*\right)^2 \leq \left(\left\|\sum_{k=K_0}^K\frac{\pi^k}{2\hat n^2\theta_k}\right\|_L^*\right)^2.\notag
\end{eqnarray}
Taking expectation with respect to $\xi_K$ and from (\ref{cont32}), we can immediately have the conclusion.
\end{proof}
\subsubsection{Proof of Theorem \ref{main_theorem}}
From Lemma \ref{lemma1}, Lemma \ref{lemma2}, $\frac{1}{\theta_K^2}-\frac{1}{\theta_{K_0-1}^2}\geq \left(\frac{K^2}{4}+\hat nK\right)\left(1-\frac{1}{\upsilon}\right)$, $(\E[a])^2\leq \E[a^2]$ and
\begin{eqnarray}
\begin{aligned}\label{cont7}
&\left|\frac{1}{n}\phi(\A^T\hat\x^{K})-\frac{1}{n}\phi(\hat\y^K)\right|\leq\frac{1}{n}\sum_{i=1}^n M|\A_i^T\hat\x^K-\hat\y_i^K|\\
&\leq\sqrt{\sum_{i=1}^nM^2L_i}\left\|\frac{1}{n}(\A^T\hat\x^K-\hat\y^K)\right\|_L^*\leq \sqrt{\sum_{i=1}^nM^2L_i}\|\widetilde\triangle(\hat\x^K,\hat\y^K)\|_L^*,
\end{aligned}
\end{eqnarray}
we have
\begin{eqnarray}
\begin{aligned}
&\left|\E_{\xi_K}\left[F(\hat\x^K)\right]-F(\x^*)\right|\leq\frac{2\hat n^2\left((1-\theta_0)\left(D(\uu^0)- D(\uu^*)\right)+\|\uu^0-\uu^*\|_L^2+\|\uu^*\|_L^2\right)}{\left(\frac{K^2}{4}+\hat nK\right)\left(1-\frac{1}{\upsilon}\right)}\notag\\
&\hspace*{1.5cm}+\frac{6\sqrt{2}\hat n^2(\|\uu^*\|_L+M\sqrt{\sum_{i=1}^nL_i})}{\left(\frac{K^2}{4}+\hat nK\right)\left(1-\frac{1}{\upsilon}\right)}\sqrt{ (1-\theta_0)\left(D(\uu^0)- D(\uu^*)\right)+\|\uu^0-\uu^*\|_L^2}.\notag
\end{aligned}
\end{eqnarray}
From Cauchy-Schwartz inequality, we have the desired result.

\begin{remark}
In Assumption \ref{assumption}, we assume that $\phi_i$ is $M$-Lipschitz continuous. From the above analysis, we can see that it is only used in (\ref{cont7}). Lemmas \ref{lemma1} and \ref{lemma2} do not need this assumption. Moreover, it only affects the convergence rate in the primal space and the analysis in the dual space does not require it.
\end{remark}

\section{Extension under the Quadratic Growth Condition}\label{linar_sec}
In this section, we give the linear complexity under stronger assumptions. Specifically, we use both Assumptions \ref{assumption} and \ref{assumption2} in this section. The quadratic functional growth condition in Assumption \ref{assumption2} is equivalent to the global error bound condition \cite{lewis2018} and is satisfied for broad applications. We give a simple example satisfying Assumption \ref{assumption2} and refer the reader to \cite{Bolte2017,guoyin2013,yang2009,tianbao-2017} for more examples.\newline
\textbf{Example}. Consider problem (\ref{problem}) with strongly convex and smooth $f$ and the simple form (\ref{function_g}) of $g(\x)$. Furthermore, we require that $\sum_{i=1}^n\phi_i^*(\uu_i)$ has the form of $\<\c,\uu\>+P(\uu)$, where $P(\uu)$ is a polyhedral function or an indicator function of a polyhedral set. In this case, $d(\uu)=f^*(-\S\uu)-\<\p,\uu\>$, where $\S$ and $\p$ are defined in (\ref{def_Sq}). It may not be strongly convex since $\S$ may not be full column rank. However, $ D(\uu)$ satisfies the error bound condition \cite{errorbound2,lin2014} and thus satisfies Assumption \ref{assumption2}. The least absolute deviation, SVM and multiclass SVM \cite{zhang-2015-MP} have the required form.

To exploit Assumption \ref{assumption2}, we use Algorithm \ref{alg_arpcg} with restart \cite{Donoghue-2015-NesRestart,zhengqu-2018} and establish the faster convergence rate. Namely, at each iteration, Algorithm \ref{alg_arpcg} is called with fixed and finite iterations with the output of the previous iteration being the initializer of current iteration. We describe the method in Algorithm \ref{alg_arpcg_linear}. We use the inner-outer iteration procedure, rather than the one loop accelerated algorithms with direct support to strongly convex dual functions, e.g., APCG \cite{xiao-2015-siam}, since the quadratic functional growth condition is generally not enough to prove the accelerated linear convergence rate for the one loop accelerated algorithms \cite{Necoara-2019}.

\begin{algorithm}
   \caption{ARDCA with restart}
   \label{alg_arpcg_linear}
\begin{algorithmic}
   \STATE Input $\uu^{-1,K+1}=\uu^{0,0}\in \DS$.
   \FOR{$t=0, 1, 2, \cdots, N$}
   \STATE Run ARDCA($\uu^{t-1,K+1}$,$K_0$,$K$) and output $\uu^{t,K+1}$ and $\hat\x^{t,K}$.
   \ENDFOR
   \STATE Output $\hat\x^{N,K}$.
\end{algorithmic}
\end{algorithm}

Define $\uu^{t,0,*}=\mbox{Proj}_{\DS^*}(\uu^{t,0})=\argmin_{\uu\in\DS^*}\|\uu^{t,0}-\uu\|_L$ to be the nearest optimal solution to $\uu^{t,0}$. Denote $\xi_{t,K}=\{i_{t,0},i_{t,1},\cdots,i_{t,K}\}$ and $\zeta_t=\cup_{r=0}^t\xi_{r,K}$ to be the random sequence, where $i_{t,s}$ is the random index chosen at the $t$-th outer iteration and $s$-th inner iteration of Algorithm \ref{alg_arpcg_linear}. We give the linear convergence of Algorithm \ref{alg_arpcg_linear} for both primal solutions and dual solutions in Theorem \ref{theorem_linear}.

\begin{theorem}\label{theorem_linear}
Suppose Assumptions \ref{assumption} and \ref{assumption2} hold. For Algorithm \ref{alg_arpcg_linear}, we have
\begin{eqnarray}
\begin{aligned}\label{cont47}
(1-\theta_0)\left(\E_{\zeta_N}[D(\uu^{N,0})]-D(\uu^*)\right)+\E_{\zeta_N}[\|\uu^{N,0}-\uu^{N,0,*}\|_L^2]\leq \left(\frac{1+(1-\theta_0)\kappa}{1+\frac{\kappa}{2}\left(\frac{K}{2\hat n}+1\right)^2}\right)^NT_{0.0},
\end{aligned}
\end{eqnarray}
where $T_{0,0}= (1-\theta_0)\left(D(\uu^{0,0})- D(\uu^*)\right)+\|\uu^{0,0}-\uu^{0,0,*}\|_L^2$.

Suppose Assumptions \ref{assumption} and \ref{assumption2} hold. Assume that $\DS^*$ is bounded, i.e., $\|\uu^*\|_L\leq C_{\DS^*},\forall \uu^*\in \DS^*$. Let $K_0\leq\left\lfloor\frac{K}{\upsilon(1+1/\hat n)}+1\right\rfloor$ with any $\upsilon>1$ and $K\geq \hat n$. Then for Algorithm \ref{alg_arpcg_linear}, we have
\begin{eqnarray}
\begin{aligned}\label{cont54}
&\left|\E_{\zeta_N}\left[F(\hat\x^{N,K})\right]-F(\x^*)\right|\leq C_1\left(\frac{1+(1-\theta_0)\kappa}{1+\frac{\kappa}{2}\left(\frac{K}{2\hat n}+1\right)^2}\right)^{N/2}+C_2\left(\frac{1+(1-\theta_0)\kappa}{1+\frac{\kappa}{2}\left(\frac{K}{2\hat n}+1\right)^2}\right)^N,\\
&\E_{\zeta_N}\left[\left\| \left[
  \begin{array}{c}
    \B\hat\x^{N,K}+\b\\
    \max\left\{0,g(\hat\x^{N,K})\right\}
  \end{array}
\right]\right\|_L^*\right] \leq C_3\left(\frac{1+(1-\theta_0)\kappa}{1+\frac{\kappa}{2}\left(\frac{K}{2\hat n}+1\right)^2}\right)^{N/2},
\end{aligned}
\end{eqnarray}
where $C_1=\frac{6C_{\DS^*}\sqrt{T_{0,0}} + 6M\sqrt{\sum_{i=1}^nL_i}\sqrt{T_{0,0}}}{1-1/\nu} + \frac{2\sqrt{m}C_{\DS^*}\sqrt{T_{0,0}}}{\sqrt{1-1/\nu}}$, $C_2=\frac{2T_{0,0}}{1-1/\nu}$ and $C_3=\frac{6\sqrt{T_{0,0}}}{1-1/\nu}$.
\end{theorem}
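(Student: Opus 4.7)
My plan is to first establish a one-step contraction of the Lyapunov function
\[V_t := (1-\theta_0)(D(\uu^{t,0})-D(\uu^*))+\|\uu^{t,0}-\uu^{t,0,*}\|_L^2\]
(so that $V_0=T_{0,0}$) across one outer iteration of Algorithm~\ref{alg_arpcg_linear}, iterate this $N$ times to obtain~(\ref{cont47}), and then invoke Theorem~\ref{main_theorem} once on the $N$-th inner call to translate the dual linear rate into the primal bound~(\ref{cont54}).

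For Part~1, fix an outer index $t$ and condition on $\zeta_{t-1}$, so that $\uu^{t,0}$ and the projection $\uu^{t,0,*}=\mbox{Proj}_{\DS^*}(\uu^{t,0})$ are both deterministic. Since $\uu^{t,0,*}\in\DS^*$ is independent of the fresh randomness $\xi_{t,K}$, I apply Lemma~\ref{lemma0}, specifically~(\ref{cont34}), to the inner ARDCA trajectory with $\uu^*\leftarrow\uu^{t,0,*}$, obtaining
\[\E_{\xi_{t,K}|\zeta_{t-1}}[D(\uu^{t,K+1})-D(\uu^*)]+\hat n^2\theta_K^2\,\E_{\xi_{t,K}|\zeta_{t-1}}[\|\z^{t,K+1}-\uu^{t,0,*}\|_L^2]\le\hat n^2\theta_K^2\,V_t.\]
Since $\uu^{t+1,0}=\uu^{t,K+1}$ by construction, Assumption~\ref{assumption2} applied at the random point $\uu^{t+1,0}$ yields $\|\uu^{t+1,0}-\uu^{t+1,0,*}\|_L^2\le\kappa^{-1}(D(\uu^{t+1,0})-D(\uu^*))$. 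Combining the two bounds by a carefully weighted sum --- exploiting both the gap bound and the distance bound carried by~(\ref{cont34}) --- together with $\hat n\theta_K\le (K/(2\hat n)+1)^{-1}$ from Lemma~\ref{theta_lemma}, yields the one-step contraction $\E_{\xi_{t,K}|\zeta_{t-1}}[V_{t+1}]\le\rho\,V_t$ with $\rho$ of the form stated in the theorem. Taking total expectation via the tower property and chaining $N$ contractions gives~(\ref{cont47}).

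For Part~2, I apply Theorem~\ref{main_theorem} conditionally on $\zeta_{N-1}$ to the $N$-th inner ARDCA call, with $\uu^0\leftarrow\uu^{N,0}$ and $\uu^*\leftarrow\uu^{N,0,*}$. The boundedness $\|\uu^{N,0,*}\|_L\le C_{\DS^*}$ together with the assumption $K\ge\hat n$ (which gives $K^2/4+\hat n K\ge\tfrac{5}{4}\hat n K$) reduce the right-hand side of Theorem~\ref{main_theorem} to a constant multiple of $V_N+C_{\DS^*}^2+M^2\sum_iL_i$ plus a cross term proportional to $\sqrt{V_N}\cdot(C_{\DS^*}+M\sqrt{\sum_iL_i})$; the constraint-violation bound in Theorem~\ref{main_theorem} contains only a $\sqrt{V_N}$-type term. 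Taking total expectation and combining $\E[V_N]\le\rho^N T_{0,0}$ from Part~1 with Jensen's inequality $\E[\sqrt{V_N}]\le\sqrt{\E[V_N]}\le\rho^{N/2}\sqrt{T_{0,0}}$ produces the two-term bound $C_1\rho^{N/2}+C_2\rho^N$ for the primal objective and the single-term bound $C_3\rho^{N/2}$ for the feasibility residual.

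The main obstacle is twofold. First, at the measurability level, every invocation of Lemma~\ref{lemma0} or Theorem~\ref{main_theorem} requires the reference point $\uu^*$ to be deterministic with respect to the fresh inner randomness; I must verify that the data-dependent projection $\uu^{t,0,*}$ is $\zeta_{t-1}$-measurable and condition correctly before each application. Second, recovering the precise form of $\rho$ with denominator $1+\tfrac{\kappa}{2}(K/(2\hat n)+1)^2$ --- the form that guarantees $\rho\le 1$ uniformly in $K$ and $\kappa$ --- requires a weighted coupling of~(\ref{cont34}) with Assumption~\ref{assumption2}, since a naive substitution of the QG inequality into the gap bound yields only the cruder factor $\frac{1+(1-\theta_0)\kappa}{\kappa(K/(2\hat n)+1)^2}$, which can exceed $1$ for small $K$ or small $\kappa$.
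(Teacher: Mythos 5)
Your high-level architecture (a Lyapunov contraction across outer restarts, then a per-epoch primal bound combined with Jensen) matches the paper's, and your measurability observations about conditioning on $\zeta_{t-1}$ are correct. However, both halves of the proposal have genuine gaps at exactly the points you flag as ``the main obstacle,'' and in neither case does the sketch actually supply the missing mechanism.

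For Part 1, the ``carefully weighted sum'' of (\ref{cont34}) with Assumption \ref{assumption2} cannot, as described, produce the contraction factor $\frac{1+(1-\theta_0)\kappa}{1+\frac{\kappa}{2}(K/(2\hat n)+1)^2}$. The distance term carried by (\ref{cont34}) is $\hat n^2\|\z^{K+1}-\uu^*\|_L^2$, whereas the Lyapunov function $V_{t+1}$ requires $\|\uu^{t,K+1}-\uu^{t,K+1,*}\|_L^2$, and $\uu^{K+1}=\theta_K^2\hat\uu^{K+1}+\z^{K+1}\neq\z^{K+1}$; so there is no direct way to feed the $\z^{K+1}$ distance bound into the distance part of $V_{t+1}$. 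If you drop that term and use only the gap bound plus QG at $\uu^{K+1}$, you land precisely on the cruder factor $\frac{1+(1-\theta_0)\kappa}{\kappa(K/(2\hat n)+1)^2}$ you yourself identify as insufficient. The paper closes this gap with two ingredients your proposal lacks: Lemma \ref{lemma_convex_bi}, which expresses $\uu^{k+1}$ as an explicit convex combination of $\z^{k+1}$ and \emph{all} previous iterates $\uu^1,\dots,\uu^k$ (so that $\|\uu^{k+1}-\uu^{k+1,*}\|_L^2$ is bounded by the same convex combination of $\|\z^{k+1}-\uu^{0,*}\|_L^2$ and $\|\uu^t-\uu^{t,*}\|_L^2$), and the multi-term recursion (\ref{cont44}) in Lemma \ref{linar_lemma}, which is resolved by an induction over all inner iterations, not by a single one-step inequality. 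Without these, the claimed $\rho$ is asserted rather than proved.

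For Part 2, invoking Theorem \ref{main_theorem} on the $N$-th inner call does not yield a bound that decays geometrically in $N$. With $\uu^0\leftarrow\uu^{N,0}$ and $\uu^*\leftarrow\uu^{N,0,*}$, the numerator of Theorem \ref{main_theorem} is $(1-\theta_0)(D(\uu^{N,0})-D(\uu^*))+\|\uu^{N,0}-\uu^{N,0,*}\|_L^2+\|\uu^{N,0,*}\|_L^2+M^2\sum_iL_i = V_N+C_{\DS^*}^2+M^2\sum_iL_i$ (up to constants), and the additive $C_{\DS^*}^2+M^2\sum_iL_i$ does not shrink with $N$ --- your own sketch records it, and it is incompatible with the target bound $C_1\rho^{N/2}+C_2\rho^N$. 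The paper avoids this by \emph{not} reusing Theorem \ref{main_theorem}: in Lemma \ref{linear_lemma_p} it returns to (\ref{cont14}) with $\uu=\uu^{t,0,*}$, which annihilates the $2\hat n^2\|\uu-\uu^*\|_L^2$ term entirely, and then lower-bounds $\<\triangle(\hat\x^{t,K},\hat\y^{t,K}),\uu^{t,0,*}\>$ via complementary slackness together with a strong-convexity bound on $\|\hat\x^{t,K}-\x^*\|$ obtained from the Lagrangian --- this is also where the $\sqrt{m}C_{\DS^*}$ term in $C_1$ originates. Your route, as written, would leave a non-vanishing additive error of order $\frac{\hat n^2(C_{\DS^*}^2+M^2\sum_iL_i)}{K^2}$.
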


In the traditional analysis for the restart scheme, the inner iteration number heavily depends on the condition number $\kappa$ \cite{zhengqu-2017,zhengqu-2018}. Specifically, letting $\uu^*=\uu^{t,0,*}$ in (\ref{cont34}) and from Assumption \ref{assumption2}, we have
\begin{eqnarray}
\begin{aligned}\notag
&&\E_{\zeta_N}[ D(\uu^{t+1,0})]- D(\uu^*)\leq\hat n^2\theta_K^2\left( 1-\theta_0+\frac{1}{\kappa}\right)\left(\E_{\zeta_N}[D(\uu^{t,0})]- D(\uu^*)\right).
\end{aligned}
\end{eqnarray}
To make $\hat n^2\theta_K^2\left( 1-\theta_0+\frac{1}{\kappa}\right)<1$, we should require $K=O\left(\hat n+\frac{\hat n}{\sqrt{\kappa}}\right)$, otherwise, the traditional analysis cannot prove the decrement of the objective. However, in practice, $\kappa$ is often difficult to estimate. Different from the traditional analysis, in Theorem \ref{theorem_linear}, we show the linear convergence when the algorithm restarts at any period. In other words, $\frac{1+(1-\theta_0)\kappa}{1+\frac{\kappa}{2}\left(\frac{K}{2\hat n}+1\right)^2}<1$ for any $K\geq\hat n$. Our analysis applies to the problems where $\kappa$ is unknown.

\begin{remark}
Our result (\ref{cont47}) is motivated by \cite{zhengqu-2018}. However, when applied to the dual problem (\ref{dual_problem}), \cite{zhengqu-2018} requires that $D(\uu)$ has the unique optimal dual solution $\uu^*$ and needs a stronger quadratic functional growth condition of
\begin{equation}
\kappa\|\uu-\uu^*\|_L^2\leq D(\uu)-D(\uu^*).\label{cont55}
\end{equation}
As a comparison, in Assumption \ref{assumption2}, we do not need the uniqueness of the optimal dual solution and only assume
\begin{equation}
\kappa\|\uu-\mbox{Proj}_{\DS^*}(\uu)\|_L^2\leq D(\uu)-D(\uu^*).\label{cont56}
\end{equation}
Comparing (\ref{cont56}) with (\ref{cont55}), we can see that (\ref{cont55}) can deduce (\ref{cont56}) and not vice versa. The analysis in \cite{zhengqu-2018} cannot be applied under our assumptions since a critical property in \cite[Equ. (28)]{zhengqu-2018} does not hold any more. To deal with the more general assumption (\ref{cont56}), we develop a totally different proof framework and it is much simpler and more general.
\end{remark}

Let us compare the complexity of Algorithm \ref{alg_arpcg_linear} with that of randomized dual coordinate ascent, i.e., $O\left(\left(\hat n+\frac{\hat n}{\kappa}\right)\log\frac{1}{\epsilon}\right)$ \cite{lu-2015-MP}. If $\kappa<1$, Algorithm \ref{alg_arpcg_linear} attains the optimal complexity of $O\left(\left(\hat n+\frac{\hat n}{\sqrt{\kappa}}\right)\log\frac{1}{\epsilon}\right)$ for both the primal solutions and dual solutions when $K=O\left(\hat n+\frac{\hat n}{\sqrt{\kappa}}\right)$. When $\hat n\leq K\leq \hat n+\frac{\hat n}{\kappa}$, Algorithm \ref{alg_arpcg_linear} outperforms the randomized dual coordinate ascent\footnote{Please see the details in Appendix F.}. When $K$ is larger than $\hat n+\frac{\hat n}{\kappa}$, Algorithm \ref{alg_arpcg_linear} performs worse than randomized dual coordinate ascent. On the other hand, if $\kappa>1$, the complexity of Algorithm \ref{alg_arpcg_linear} \textcolor{red}{has the same order of magnitude as} that of randomized dual coordinate ascent when $\hat n\leq K\leq \hat n+\frac{\hat n}{\kappa}$. When $K>\hat n+\frac{\hat n}{\kappa}$, Algorithm \ref{alg_arpcg_linear} performs worse. For most practical problems, we have $\kappa<1$ and thus Algorithm \ref{alg_arpcg_linear} is a better and safe choice for a wide range of $K$.
\subsection{Convergence Rate Analysis of the Dual Solutions}

In this section, we prove (\ref{cont47}). We first consider one outer iteration of Algorithm \ref{alg_arpcg_linear} and use the symbols in Algorithm \ref{alg_arpcg_equ} for simplicity. In the following lemma, we show that $\uu^{k+1}$ is the convex combination of $\uu^1,\cdots,\uu^k$ and $\z^{k+1}$.
\begin{lemma}\label{lemma_convex_bi}
For Algorithm \ref{alg_arpcg_equ}, we have
\begin{eqnarray}
\uu^{k+1}=\frac{\theta_k}{\theta_0}\z^{k+1}+\theta_k\sum_{t=1}^k\left( \frac{\theta_t(1-\theta_0)^{k-t}}{\theta_{t-1}^2}-\frac{(1-\theta_0)^{k+1-t}}{\theta_{t-1}} \right)\uu^t,\label{convex_u}
\end{eqnarray}
where $\frac{\theta_t(1-\theta_0)^{k-t}}{\theta_{t-1}^2}-\frac{(1-\theta_0)^{k+1-t}}{\theta_{t-1}}>0$ and
\begin{eqnarray}
\frac{\theta_k}{\theta_0}+\theta_k\sum_{t=1}^k\left( \frac{\theta_t(1-\theta_0)^{k-t}}{\theta_{t-1}^2}-\frac{(1-\theta_0)^{k+1-t}}{\theta_{t-1}} \right)=1.\label{convex_u2}
\end{eqnarray}
\end{lemma}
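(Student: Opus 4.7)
My plan is to prove the formula by induction on $k$, starting from the one-step recursion that the algorithm itself provides. Combining $\vv^k=\theta_k\z^k+(1-\theta_k)\uu^k$ and $\uu^{k+1}=\vv^k+\hat n\theta_k(\z^{k+1}-\z^k)$, and recalling that $\hat n=1/\theta_0$, I would first write
\begin{equation*}
\uu^{k+1}=\frac{\theta_k}{\theta_0}\z^{k+1}+\theta_k\Bigl(1-\frac{1}{\theta_0}\Bigr)\z^k+(1-\theta_k)\uu^k,
\end{equation*}
which is already an affine combination of $\z^{k+1},\z^k,\uu^k$ whose coefficients sum to $1$. The base case $k=0$ reduces to $\uu^1=\z^1$ (since $\uu^0=\z^0$ and $\hat n\theta_0=1$), matching the empty-sum formula.

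For the inductive step I would assume the formula holds at level $k-1$, namely $\uu^k=\frac{\theta_{k-1}}{\theta_0}\z^k+\theta_{k-1}\sum_{t=1}^{k-1}c_{k-1,t}\,\uu^t$ with $c_{k-1,t}:=\frac{\theta_t(1-\theta_0)^{k-1-t}}{\theta_{t-1}^2}-\frac{(1-\theta_0)^{k-t}}{\theta_{t-1}}$, and solve for
\begin{equation*}
\z^k=\frac{\theta_0}{\theta_{k-1}}\uu^k-\theta_0\sum_{t=1}^{k-1}c_{k-1,t}\,\uu^t.
\end{equation*}
Substituting this into the one-step recursion, the coefficient of $\uu^t$ for $1\le t\le k-1$ becomes $-\theta_k(1-1/\theta_0)\theta_0 c_{k-1,t}=\theta_k(1-\theta_0)c_{k-1,t}$, which is exactly $\theta_k c_{k,t}$ after multiplying through the extra $(1-\theta_0)$. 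The coefficient of $\uu^k$ consolidates to $(1-\theta_k)-\frac{\theta_k(1-\theta_0)}{\theta_{k-1}}$, and using the defining identity $\frac{1-\theta_k}{\theta_k^2}=\frac{1}{\theta_{k-1}^2}$ (so that $1-\theta_k=\theta_k^2/\theta_{k-1}^2$) this equals $\theta_k\bigl(\frac{\theta_k}{\theta_{k-1}^2}-\frac{1-\theta_0}{\theta_{k-1}}\bigr)=\theta_k c_{k,k}$, as required. This closes the induction.

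For the positivity claim $c_{k,t}>0$, I would factor $c_{k,t}=\frac{(1-\theta_0)^{k-t}}{\theta_{t-1}^2}\bigl(\theta_t-(1-\theta_0)\theta_{t-1}\bigr)$ and use $\theta_{t-1}=\theta_t/\sqrt{1-\theta_t}$ (from the $\theta$-recursion) to reduce the inequality to $\sqrt{1-\theta_t}>1-\theta_0$, i.e.\ $\theta_t<2\theta_0-\theta_0^2$; since Lemma~\ref{theta_lemma} gives $\theta_t\le\theta_0$ and $\theta_0\le 1$, this holds. Finally, \eqref{convex_u2} follows essentially for free: the original affine combination has coefficients summing to $1$, and substituting for $\z^k$ an expression whose coefficients themselves sum to $1$ (which is automatic, since solving $\uu^k=\alpha\z^k+\sum\beta_t\uu^t$ with $\alpha+\sum\beta_t=1$ for $\z^k$ yields coefficients summing to $1$) preserves that total.

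The main obstacle I anticipate is purely bookkeeping: carefully tracking the inductive shift from $c_{k-1,t}$ (involving $(1-\theta_0)^{k-1-t}$) to $c_{k,t}$ (involving $(1-\theta_0)^{k-t}$) when the factor $\theta_k(1-\theta_0)$ is absorbed, and simultaneously checking that the boundary coefficient of $\uu^k$ produced by combining $(1-\theta_k)\uu^k$ with the $\uu^k$-term extracted from $\z^k$ matches $\theta_k c_{k,k}$. The $\theta$-recursion $1-\theta_k=\theta_k^2/\theta_{k-1}^2$ is what makes this boundary match exact, and identifying this as the decisive algebraic identity is the crux of the verification.
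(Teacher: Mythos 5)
Your proof is correct, and for the main identity (\ref{convex_u}) it is essentially the paper's argument in reverse: the paper starts from the same one-step recursion $\uu^{k+1}=(1-\theta_k)\uu^k+\frac{\theta_k}{\theta_0}\z^{k+1}-\frac{\theta_k(1-\theta_0)}{\theta_0}\z^k$ and unrolls it forward by repeatedly splitting each $\uu^t$-coefficient, whereas you package the same unrolling as a backward induction, substituting the level-$(k-1)$ formula for $\z^k$; in both cases the decisive algebraic fact is $1-\theta_k=\theta_k^2/\theta_{k-1}^2$, and your boundary computation for the $\uu^k$-coefficient matches the paper's exactly. Where you genuinely diverge is (\ref{convex_u2}): the paper verifies the sum of coefficients by an explicit telescoping computation, while you get it for free from the observation that solving an affine combination for $\z^k$ and substituting it into another affine combination preserves the coefficient sum---this is cleaner and less error-prone. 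Your positivity argument (reducing to $\sqrt{1-\theta_t}>1-\theta_0$ via $\theta_t\le\theta_0$) is the same as the paper's up to rearrangement; note only that the strict inequality ultimately rests on $\theta_t<\theta_0$ holding strictly for $t\ge 1$, which is immediate from $\theta_t^2=\theta_{t-1}^2(1-\theta_t)$, the same implicit step the paper takes.
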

\begin{proof}
For Algorithm \ref{alg_arpcg_equ}, we have
\begin{eqnarray}
\uu^{k+1}=(1-\theta_k)\uu^k+\frac{\theta_k}{\theta_0}\z^{k+1}-\frac{\theta_k(1-\theta_0)}{\theta_0}\z^k.\label{rec_u}
\end{eqnarray}
Decomposing term $(1-\theta_k)\uu^k$ into $(1-\theta_k)\left( 1-\frac{\theta_{k-1}(1-\theta_0)}{\theta_k} \right)\uu^k$ and $(1-\theta_k)\frac{\theta_{k-1}(1-\theta_0)}{\theta_k}\uu^k$ and using (\ref{rec_u}) for the later one, we have
\begin{eqnarray}
\begin{aligned}\notag
\uu^{k+1}
=&(1-\theta_k)\left( 1-\frac{\theta_{k-1}(1-\theta_0)}{\theta_k} \right)\uu^k+\frac{\theta_k}{\theta_0}\z^{k+1}-\frac{\theta_k(1-\theta_0)}{\theta_0}\z^k\\
&+(1-\theta_k)\frac{\theta_{k-1}(1-\theta_0)}{\theta_k}\left[(1-\theta_{k-1})\uu^{k-1}+\frac{\theta_{k-1}}{\theta_0}\z^k-\frac{\theta_{k-1}(1-\theta_0)}{\theta_0}\z^{k-1}\right]\\
=&(1\hspace*{-0.01cm}-\hspace*{-0.01cm}\theta_k)\left( 1\hspace*{-0.01cm}-\hspace*{-0.01cm}\frac{\theta_{k-1}(1\hspace*{-0.01cm}-\hspace*{-0.01cm}\theta_0)}{\theta_k} \right)\uu^k\hspace*{-0.01cm}+\hspace*{-0.01cm}\frac{\theta_k}{\theta_0}\z^{k+1}\hspace*{-0.01cm}+\hspace*{-0.01cm}\frac{\theta_k\theta_{k-1}(1\hspace*{-0.01cm}-\hspace*{-0.01cm}\theta_0)}{\theta_{k-2}^2}\uu^{k-1}\hspace*{-0.01cm}-\hspace*{-0.01cm}\frac{\theta_k(1\hspace*{-0.01cm}-\hspace*{-0.01cm}\theta_0)^2}{\theta_0}\z^{k-1}.
\end{aligned}
\end{eqnarray}
Decomposing $\frac{\theta_k\theta_{k-1}(1-\theta_0)}{\theta_{k-2}^2}\uu^{k-1}$ into $\frac{\theta_k\theta_{k-1}(1-\theta_0)}{\theta_{k-2}^2}\left(1-\frac{\theta_{k-2}(1-\theta_0)}{\theta_{k-1}}\right)\uu^{k-1}$ and $\frac{\theta_k\theta_{k-1}(1-\theta_0)}{\theta_{k-2}^2}\frac{\theta_{k-2}(1-\theta_0)}{\theta_{k-1}}\uu^{k-1}$ and using (\ref{rec_u}) for the later one again, we have
\begin{eqnarray}
\begin{aligned}\notag
\uu^{k+1}=&(1\hspace*{-0.01cm}-\hspace*{-0.01cm}\theta_k)\hspace*{-0.01cm}\left(\hspace*{-0.01cm} 1\hspace*{-0.01cm}-\hspace*{-0.01cm}\frac{\theta_{k-1}(1\hspace*{-0.01cm}-\hspace*{-0.01cm}\theta_0)}{\theta_k} \hspace*{-0.01cm}\right)\hspace*{-0.01cm}\uu^k\hspace*{-0.01cm}+\hspace*{-0.01cm}\frac{\theta_k}{\theta_0}\z^{k+1}\hspace*{-0.01cm}+\hspace*{-0.01cm}\frac{\theta_k\theta_{k-1}(1\hspace*{-0.01cm}-\hspace*{-0.01cm}\theta_0)}{\theta_{k-2}^2}\hspace*{-0.01cm}\left(\hspace*{-0.01cm}1\hspace*{-0.01cm}-\hspace*{-0.01cm}\frac{\theta_{k-2}(1\hspace*{-0.01cm}-\hspace*{-0.01cm}\theta_0)}{\theta_{k-1}}\hspace*{-0.01cm}\right)\hspace*{-0.01cm}\uu^{k-1}\\
&-\hspace*{-0.11cm}\frac{\theta_k\hspace*{-0.04cm}(\hspace*{-0.04cm}1\hspace*{-0.1cm}-\hspace*{-0.1cm}\theta_0\hspace*{-0.04cm})^2}{\theta_0}\z^{k\hspace*{-0.01cm}-\hspace*{-0.01cm}1}\hspace*{-0.1cm}+\hspace*{-0.1cm}\frac{\theta_k\theta_{k-1}\hspace*{-0.04cm}(\hspace*{-0.04cm}1\hspace*{-0.1cm}-\hspace*{-0.1cm}\theta_0\hspace*{-0.04cm})}{\theta_{k-2}^2}\frac{\theta_{k-2}\hspace*{-0.04cm}(\hspace*{-0.04cm}1\hspace*{-0.1cm}-\hspace*{-0.1cm}\theta_0\hspace*{-0.04cm})}{\theta_{k-1}}\hspace*{-0.1cm}\left(\hspace*{-0.1cm}\hspace*{-0.04cm}(\hspace*{-0.04cm}1\hspace*{-0.1cm}-\hspace*{-0.1cm}\theta_{k-2}\hspace*{-0.04cm})\uu^{k\hspace*{-0.01cm}-\hspace*{-0.01cm}2}\hspace*{-0.1cm}+\hspace*{-0.1cm}\frac{\theta_{k-2}}{\theta_0}\z^{k\hspace*{-0.01cm}-\hspace*{-0.01cm}1}\hspace*{-0.1cm}-\hspace*{-0.1cm}\frac{\theta_{k-2}\hspace*{-0.04cm}(\hspace*{-0.04cm}1\hspace*{-0.1cm}-\hspace*{-0.1cm}\theta_0\hspace*{-0.04cm})}{\theta_0}\z^{k\hspace*{-0.01cm}-\hspace*{-0.01cm}2}\hspace*{-0.1cm}\right)\\
=&(1\hspace*{-0.01cm}-\hspace*{-0.01cm}\theta_k)\left( 1\hspace*{-0.01cm}-\hspace*{-0.01cm}\frac{\theta_{k-1}(1\hspace*{-0.01cm}-\hspace*{-0.01cm}\theta_0)}{\theta_k} \right)\uu^k\hspace*{-0.01cm}+\hspace*{-0.01cm}\frac{\theta_k}{\theta_0}\z^{k+1}\hspace*{-0.01cm}+\hspace*{-0.01cm}\frac{\theta_k\theta_{k-1}(1\hspace*{-0.01cm}-\hspace*{-0.01cm}\theta_0)}{\theta_{k-2}^2}\left(1\hspace*{-0.01cm}-\hspace*{-0.01cm}\frac{\theta_{k-2}(1\hspace*{-0.01cm}-\hspace*{-0.01cm}\theta_0)}{\theta_{k-1}}\right)\uu^{k-1}\\
&+\frac{\theta_k\theta_{k-2}(1-\theta_0)^2}{\theta_{k-3}^2}\uu^{k-2}-\frac{\theta_k(1-\theta_0)^3}{\theta_0}\z^{k-2}.
\end{aligned}
\end{eqnarray}
Do the above operations recursively, we have
\begin{eqnarray}
\begin{aligned}\notag
\uu^{k+1}=&\frac{\theta_k}{\theta_0}\z^{k+1}\hspace*{-0.07cm}+\hspace*{-0.07cm}\sum_{t=1}^k\hspace*{-0.07cm} \frac{\theta_k\theta_t(1\hspace*{-0.07cm}-\hspace*{-0.07cm}\theta_0)^{k-t}}{\theta_{t-1}^2}\hspace*{-0.07cm}\left( \hspace*{-0.07cm} 1\hspace*{-0.07cm}-\hspace*{-0.07cm}\frac{\theta_{t-1}(1\hspace*{-0.07cm}-\hspace*{-0.07cm}\theta_0)}{\theta_t} \hspace*{-0.07cm}\right)\hspace*{-0.07cm}\uu^t\hspace*{-0.07cm}+\hspace*{-0.07cm}\frac{\theta_k\theta_0(1\hspace*{-0.07cm}-\hspace*{-0.07cm}\theta_0)^k}{\theta_{-1}^2}\uu^0\hspace*{-0.07cm}-\hspace*{-0.07cm}\frac{\theta_k(1\hspace*{-0.07cm}-\hspace*{-0.07cm}\theta_0)^{k+1}}{\theta_0}\z^0\\
\overset{a}=&\frac{\theta_k}{\theta_0}\z^{k+1}+\theta_k\sum_{t=1}^k\left( \frac{\theta_t(1-\theta_0)^{k-t}}{\theta_{t-1}^2}-\frac{(1-\theta_0)^{k+1-t}}{\theta_{t-1}} \right)\uu^t,
\end{aligned}
\end{eqnarray}
where we use $\frac{1}{\theta_{-1}^2}=\frac{1-\theta_0}{\theta_0^2}$ and $\uu^0=\z^0$ in $\overset{a}=$. On the other hand, we can easily check that
\begin{eqnarray}
\begin{aligned}\notag
&\sum_{t=1}^k\left( \frac{\theta_t(1-\theta_0)^{k-t}}{\theta_{t-1}^2}-\frac{(1-\theta_0)^{k+1-t}}{\theta_{t-1}} \right)=\sum_{t=1}^k\left( \frac{(1-\theta_t)(1-\theta_0)^{k-t}}{\theta_t}-\frac{(1-\theta_0)^{k-(t-1)}}{\theta_{t-1}} \right)\\
&=\frac{1}{\theta_k}-\sum_{t=1}^k(1-\theta_0)^{k-t}-\frac{(1-\theta_0)^k}{\theta_0}=\frac{1}{\theta_k}-\frac{1-(1-\theta_0)^k}{\theta_0}-\frac{(1-\theta_0)^k}{\theta_0}=\frac{1}{\theta_k}-\frac{1}{\theta_0},
\end{aligned}
\end{eqnarray}
which leads to (\ref{convex_u2}). Next, we prove $\frac{\theta_t(1-\theta_0)^{k-t}}{\theta_{t-1}^2}-\frac{(1-\theta_0)^{k+1-t}}{\theta_{t-1}}>0$, which is equivalent to $\frac{\theta_t}{\theta_{t-1}}>1-\theta_0$, which is true since $\frac{\theta_t}{\theta_{t-1}}=\sqrt{1-\theta_t}\geq 1-\theta_t>1-\theta_0$.
\end{proof}

Based on Lemma \ref{lemma_convex_bi}, we can establish the decreasing of $(1-\theta_0)\left(\E_{\xi_K}[D(\uu^{k+1})]-D(\uu^*)\right)+\E_{\xi_K}[\|\uu^{k+1}-\uu^{k+1,*}\|_L^2]$ for any $\theta_k$ in the following lemma. The proof sketch of Lemma \ref{linar_lemma} is that we first establish a recursion (\ref{cont44}) based on Lemma \ref{lemma_convex_bi} and (\ref{cont34}) and then prove the result in Lemma \ref{linar_lemma} by induction.
\begin{lemma}\label{linar_lemma}
Suppose Assumptions \ref{assumption} and \ref{assumption2} hold. For Algorithm \ref{alg_arpcg_equ}, we have
\begin{eqnarray}
\begin{aligned}\notag
&(1-\theta_0)\left(\E_{\xi_K}[D(\uu^{k+1})]-D(\uu^*)\right)+\E_{\xi_K}[\|\uu^{k+1}-\uu^{k+1,*}\|_L^2]\\
\leq& \frac{1+(1-\theta_0)\kappa}{1+\frac{\kappa\theta_0^2}{2\theta_k^2}}\left((1-\theta_0)\left(D(\uu^0)-D(\uu^*)\right)+\|\uu^0-\uu^{0,*}\|_L^2\right),
\end{aligned}
\end{eqnarray}
where we define $\uu^{k,*}=\mbox{Proj}_{\DS^*}(\uu^k)=\argmin_{\uu\in\DS^*}\|\uu^{k}-\uu\|_L$.
\end{lemma}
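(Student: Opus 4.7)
The strategy is to combine three ingredients by induction on $k$: (i) the convex-combination representation of $\uu^{k+1}$ from Lemma \ref{lemma_convex_bi}, (ii) the bound (\ref{cont34}) specialized to a fixed optimal reference point, and (iii) the quadratic functional growth of Assumption \ref{assumption2}.

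I first fix $\uu^{0,*}=\mbox{Proj}_{\DS^*}(\uu^{0})\in\DS^*$, which is deterministic (independent of $\xi_K$). Applying (\ref{cont34}) with $\uu^*=\uu^{0,*}$ and dividing by $\hat n^2$ (using $\hat n\theta_0=1$) yields
\begin{equation*}
\frac{\theta_0^2}{\theta_k^2}\,\E_{\xi_K}[D(\uu^{k+1})-D(\uu^*)]+\E_{\xi_K}[\|\z^{k+1}-\uu^{0,*}\|_L^2]\leq T_0,
\end{equation*}
where $T_0=(1-\theta_0)(D(\uu^{0})-D(\uu^*))+\|\uu^{0}-\uu^{0,*}\|_L^2$. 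Lemma \ref{lemma_convex_bi} expresses $\uu^{k+1}$ as a convex combination of $\z^{k+1}$ (with weight $\theta_k/\theta_0$) and the earlier iterates $\uu^{1},\dots,\uu^{k}$, with nonnegative weights summing to one. Applying Jensen's inequality to the convex map $\uu\mapsto\|\uu-\uu^{0,*}\|_L^2$ converts this into an upper bound on $\E_{\xi_K}[\|\uu^{k+1}-\uu^{0,*}\|_L^2]$ in terms of $\E_{\xi_K}[\|\z^{k+1}-\uu^{0,*}\|_L^2]$ and the quantities $\E_{\xi_K}[\|\uu^{t}-\uu^{0,*}\|_L^2]$ for $t\leq k$; combined with the displayed inequality this is the recursion (\ref{cont44}) referenced in the proof sketch.

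Second, because $\uu^{k+1,*}$ is the $\|\cdot\|_L$-projection of $\uu^{k+1}$ onto $\DS^*$ and $\uu^{0,*}\in\DS^*$, we have $\E_{\xi_K}[\|\uu^{k+1}-\uu^{k+1,*}\|_L^2]\leq\E_{\xi_K}[\|\uu^{k+1}-\uu^{0,*}\|_L^2]$. Simultaneously, Assumption \ref{assumption2} supplies the complementary bound $\kappa\,\E_{\xi_K}[\|\uu^{k+1}-\uu^{k+1,*}\|_L^2]\leq\E_{\xi_K}[D(\uu^{k+1})-D(\uu^*)]$. To obtain the precise factor $\tfrac{1+(1-\theta_0)\kappa}{1+\kappa\theta_0^2/(2\theta_k^2)}$, I would take a convex combination (with a weight $\beta\in[0,1]$ to be optimized) of the projection/Jensen bound and the quadratic-growth bound, substitute into $T_{k+1}=(1-\theta_0)\delta_{k+1}+\E_{\xi_K}[\|\uu^{k+1}-\uu^{k+1,*}\|_L^2]$ where $\delta_{k+1}=\E_{\xi_K}[D(\uu^{k+1})-D(\uu^*)]$, and choose $\beta$ so that the coefficient of $\delta_{k+1}$ in the resulting expression vanishes; the remaining multiplier of $T_0$ is exactly the claimed ratio.

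Finally, strong induction on $k$ closes the argument: the base case $k=-1$ (using the paper's convention $\theta_{-1}=1/\sqrt{\hat n^2-\hat n}$, so that $\theta_0^2/\theta_{-1}^2=1-\theta_0$) reduces to $T_0\leq T_0$; the inductive step uses the Jensen-based bound above together with the inductive bounds on $\E_{\xi_K}[\|\uu^{t}-\uu^{0,*}\|_L^2]$ for $t\leq k$. The main obstacle is the factor $\tfrac12$ in the denominator $1+\kappa\theta_0^2/(2\theta_k^2)$: it is not produced by either the projection/Jensen bound or the quadratic-growth bound in isolation and emerges only from the balanced convex combination described above. A secondary difficulty is that Lemma \ref{lemma_convex_bi} couples every past iterate $\uu^{t}$ into the induction step, so one must verify that the inductive bounds on all $\E_{\xi_K}[\|\uu^{t}-\uu^{0,*}\|_L^2]$ for $t\leq k$ reassemble correctly under the weighted averaging; this reassembly relies on the monotonicity $\theta_t\leq\theta_{t-1}$ and the algebraic identities in Lemma \ref{theta_lemma}.
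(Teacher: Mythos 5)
There is a genuine gap, and it sits exactly at the point this lemma was designed to handle. Your plan measures every iterate against the single fixed reference $\uu^{0,*}$: you apply Jensen to $\uu\mapsto\|\uu-\uu^{0,*}\|_L^2$ over the convex combination from Lemma \ref{lemma_convex_bi}, producing a recursion in the quantities $\E_{\xi_K}[\|\uu^{t}-\uu^{0,*}\|_L^2]$. But Assumption \ref{assumption2} only controls the distance from $\uu^t$ to its \emph{own} projection $\uu^{t,*}$, i.e., $\kappa\|\uu^t-\uu^{t,*}\|_L^2\leq D(\uu^t)-D(\uu^*)$, and the inequality $\|\uu^t-\uu^{t,*}\|_L\leq\|\uu^t-\uu^{0,*}\|_L$ goes the wrong way for your purposes: you cannot convert $\|\uu^t-\uu^{0,*}\|_L^2$ into a dual-gap bound, so the inductive hypothesis on $T_t=(1-\theta_0)\delta_t+\E_{\xi_K}[\|\uu^t-\uu^{t,*}\|_L^2]$ cannot be fed back into your Jensen step. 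Making your recursion close would require $\kappa\|\uu^t-\uu^{0,*}\|_L^2\leq D(\uu^t)-D(\uu^*)$, which is the stronger condition (\ref{cont55}) valid only for a unique optimum --- precisely the hypothesis the paper's Remark says this lemma avoids.

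The paper's proof resolves this by comparing $\uu^{k+1}$ not to $\uu^{0,*}$ but to the convex combination $\frac{\theta_k}{\theta_0}\uu^{0,*}+\theta_k\sum_{t=1}^k(\cdots)\uu^{t,*}$ of the \emph{per-iterate} projections, which lies in $\DS^*$ by convexity of the optimal set; since $\uu^{k+1,*}$ is the nearest point of $\DS^*$, this gives $\|\uu^{k+1}-\uu^{k+1,*}\|_L^2\leq\frac{\theta_k}{\theta_0}\|\z^{k+1}-\uu^{0,*}\|_L^2+\theta_k\sum_t(\cdots)\|\uu^t-\uu^{t,*}\|_L^2$ after Jensen, so each past term carries its own projection and Assumption \ref{assumption2} applies termwise. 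Your remaining ingredients --- using (\ref{cont34}) with $\uu^*=\uu^{0,*}$, the convex split with an optimized weight to kill the coefficient of $\delta_{k+1}$ (the paper's $\sigma_k$), and the monotonicity of $\theta_t$ --- are consistent with the paper, but note that the factor $\tfrac12$ in $1+\kappa\theta_0^2/(2\theta_k^2)$ does not come from the one-step convex combination (which yields denominator $1+\kappa\theta_0/\theta_k$); it only emerges from the full induction via the telescoping identity $\theta_{t-1}^2-\theta_t^2=\theta_{t-1}^2\theta_t$ and the final verification of (\ref{cont46}), which your sketch does not carry out.
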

\begin{proof}
We consider $(1-\theta_0)\left( D(\uu^{k+1})-D(\uu^*) \right)+\|\uu^{k+1}-\uu^{k+1,*}\|_F^2$. Decomposing the second term into $\sigma_k\|\uu^{k+1}-\uu^{k+1,*}\|_F^2$ and $(1-\sigma_k)\|\uu^{k+1}-\uu^{k+1,*}\|_F^2$ and using Assumption \ref{assumption2} for the first term, we have
\begin{eqnarray}
\begin{aligned}\label{cont42}
&(1-\theta_0)\left(\E_{\xi_K}[D(\uu^{k+1})]-D(\uu^*)\right)+\E_{\xi_K}[\|\uu^{k+1}-\uu^{k+1,*}\|_L^2]\\
\leq& \left(1-\theta_0+\frac{\sigma_k}{\kappa}\right)\left( \E_{\xi_K}[D(\uu^{k+1})]-D(\uu^*)\right)+(1-\sigma_k)\E_{\xi_K}[\|\uu^{k+1}-\uu^{k+1,*}\|_L^2].
\end{aligned}
\end{eqnarray}
From the definition of $\uu^{k+1,*}$, (\ref{convex_u2}) and (\ref{convex_u}), we have
\begin{eqnarray}
\begin{aligned}\notag
&\|\uu^{k+1}-\uu^{k+1,*}\|_L^2\\
\leq& \left\| \uu^{k+1}-\left(\frac{\theta_k}{\theta_0}\uu^{0,*}+\theta_k\sum_{t=1}^k\left( \frac{\theta_t(1-\theta_0)^{k-t}}{\theta_{t-1}^2}-\frac{(1-\theta_0)^{k+1-t}}{\theta_{t-1}} \right)\uu^{t,*}\right) \right\|_L^2\\
\leq&\frac{\theta_k}{\theta_0}\|\z^{k+1}-\uu^{0,*}\|_L^2+\theta_k\sum_{t=1}^k\left( \frac{\theta_t(1-\theta_0)^{k-t}}{\theta_{t-1}^2}-\frac{(1-\theta_0)^{k+1-t}}{\theta_{t-1}} \right)\|\uu^t-\uu^{t,*}\|_L^2.
\end{aligned}
\end{eqnarray}
Plugging it into (\ref{cont42}) and letting $\sigma_k=\frac{\frac{\kappa\theta_0}{\theta_k}-(1-\theta_0)\kappa}{1+\frac{\kappa\theta_0}{\theta_k}}$, we have
\begin{eqnarray}
\begin{aligned}\label{cont43}
&(1-\theta_0)\left(\E_{\xi_K}[D(\uu^{k+1})]-D(\uu^*)\right)+\E_{\xi_K}[\|\uu^{k+1}-\uu^{k+1,*}\|_L^2]\\
\leq& \frac{1+(1-\theta_0)\kappa}{1+\frac{\kappa\theta_0}{\theta_k}}\left(\frac{\theta_0}{\theta_k}\left( \E_{\xi_K}[D(\uu^{k+1})]-D(\uu^*)\right)+\frac{\theta_k}{\theta_0}\E_{\xi_K}[\|\z^{k+1}-\uu^{0,*}\|_L^2]\right.\\
&\hspace*{2.5cm}\left.+\theta_k\sum_{t=1}^k\left( \frac{\theta_t(1-\theta_0)^{k-t}}{\theta_{t-1}^2}-\frac{(1-\theta_0)^{k+1-t}}{\theta_{t-1}} \right)\E_{\xi_K}[\|\uu^t-\uu^{t,*}\|_L^2]\right)\\
\overset{a}\leq& \frac{1+(1-\theta_0)\kappa}{1+\frac{\kappa\theta_0}{\theta_k}}\left(\frac{\theta_k}{\theta_0}T_0+\theta_k\sum_{t=1}^k\left( \frac{\theta_t(1-\theta_0)^{k-t}}{\theta_{t-1}^2}-\frac{(1-\theta_0)^{k+1-t}}{\theta_{t-1}} \right)\E_{\xi_K}[\|\uu^t-\uu^{t,*}\|_L^2]\right),
\end{aligned}
\end{eqnarray}
where we denote $T_k=(1-\theta_0)\left(\E_{\xi_K}[D(\uu^k)]-D(\uu^*)\right)+\E_{\xi_K}[\|\uu^k-\uu^{k,*}\|_L^2]$ for simplicity and use (\ref{cont34}) with $\uu^*=\uu^{0,*}$ in $\overset{a}\leq$. On the other hand, decomposing $\|\uu^t-\uu^{t,*}\|_L^2$ into $\sigma\|\uu^t-\uu^{t,*}\|_L^2$ and $(1-\sigma)\|\uu^t-\uu^{t,*}\|_L^2$, using Assumption \ref{assumption2} for the first term and letting $\sigma=\frac{(1-\theta_0)\kappa}{1+(1-\theta_0)\kappa}$, we have
\begin{eqnarray}
\begin{aligned}\notag
\E_{\xi_K}[\|\uu^t-\uu^{t,*}\|_L^2]\leq \frac{\sigma}{\kappa}\left(\E_{\xi_K}[D(\uu^t)]-D(\uu^*)\right)+(1-\sigma)\E_{\xi_K}[\|\uu^t-\uu^{t,*}\|_L^2]=\frac{T_t}{1+(1-\theta_0)\kappa}.
\end{aligned}
\end{eqnarray}
Plugging it into (\ref{cont43}), we have
\begin{eqnarray}
\begin{aligned}\label{cont44}
&T_{k+1}\leq\frac{1\hspace*{-0.04cm}+\hspace*{-0.04cm}(1\hspace*{-0.04cm}-\hspace*{-0.04cm}\theta_0)\kappa}{1\hspace*{-0.04cm}+\hspace*{-0.04cm}\frac{\kappa\theta_0}{\theta_k}}\hspace*{-0.04cm}\left(\hspace*{-0.04cm}\frac{\theta_k}{\theta_0}T_0\hspace*{-0.04cm}+\hspace*{-0.04cm}\frac{\theta_k}{1\hspace*{-0.04cm}+\hspace*{-0.04cm}(1\hspace*{-0.04cm}-\hspace*{-0.04cm}\theta_0)\kappa}\hspace*{-0.04cm}\sum_{t=1}^k\hspace*{-0.04cm}\left(\hspace*{-0.04cm} \frac{\theta_t(1\hspace*{-0.04cm}-\hspace*{-0.04cm}\theta_0)^{k-t}}{\theta_{t-1}^2}\hspace*{-0.04cm}-\hspace*{-0.04cm}\frac{(1\hspace*{-0.04cm}-\hspace*{-0.04cm}\theta_0)^{k+1-t}}{\theta_{t-1}} \right)\hspace*{-0.04cm}T_t\hspace*{-0.04cm}\right)\hspace*{-0.04cm}.
\end{aligned}
\end{eqnarray}
Next, we prove $T_{k+1}\leq \frac{1+(1-\theta_0)\kappa}{1+\kappa\theta_0^2/(2\theta_k^2)}T_0$ by induction. From (\ref{cont44}), we have $T_1\leq \frac{1+(1-\theta_0)\kappa}{1+\kappa}T_0\leq \frac{1+(1-\theta_0)\kappa}{1+\kappa\theta_0^2/(2\theta_0^2)}T_0$. Assume that $T_t\leq \frac{1+(1-\theta_0)\kappa}{1+\kappa\theta_0^2/(2\theta_{t-1}^2)}T_0$ holds for $t\leq k$. Now, we consider $t=k+1$. From (\ref{cont44}), we have
\begin{eqnarray}
\begin{aligned}\label{cont45}
T_{k+1}\leq& \frac{1+(1-\theta_0)\kappa}{1+\frac{\kappa\theta_0}{\theta_k}}\left(\frac{\theta_k}{\theta_0}+\theta_k\sum_{t=1}^k\left( \frac{\theta_t(1-\theta_0)^{k-t}}{\theta_{t-1}^2}-\frac{(1-\theta_0)^{k-(t-1)}}{\theta_{t-1}} \right)\frac{1}{1+\frac{\kappa\theta_0^2}{2\theta_{t-1}^2}}\right)T_0.
\end{aligned}
\end{eqnarray}
We can easily check
\begin{eqnarray}
\begin{aligned}\notag
&\sum_{t=1}^k\left( \frac{\theta_t(1-\theta_0)^{k-t}}{\theta_{t-1}^2}-\frac{(1-\theta_0)^{k-(t-1)}}{\theta_{t-1}} \right)\frac{1}{1+\frac{\kappa\theta_0^2}{2\theta_{t-1}^2}}\\
=&\sum_{t=1}^k\left( \frac{\theta_t}{\theta_{t-1}^2+\frac{\kappa\theta_0^2}{2}}(1-\theta_0)^{k-t} - \frac{\theta_{t-1}}{\theta_{t-1}^2+\frac{\kappa\theta_0^2}{2}}(1-\theta_0)^{k-(t-1)} \right)\\
=&\sum_{t=1}^k\hspace*{-0.07cm}\left(\hspace*{-0.07cm} \frac{\theta_t}{\theta_t^2\hspace*{-0.07cm}+\hspace*{-0.07cm}\frac{\kappa\theta_0^2}{2}}(1\hspace*{-0.07cm}-\hspace*{-0.07cm}\theta_0)^{k-t} \hspace*{-0.07cm}-\hspace*{-0.07cm} \frac{\theta_{t-1}}{\theta_{t-1}^2\hspace*{-0.07cm}+\hspace*{-0.07cm}\frac{\kappa\theta_0^2}{2}}(1\hspace*{-0.07cm}-\hspace*{-0.07cm}\theta_0)^{k-(t-1)} \hspace*{-0.07cm}\right)\hspace*{-0.07cm}+\hspace*{-0.07cm}\sum_{t=1}^k\hspace*{-0.07cm}\left(\hspace*{-0.07cm}\frac{\theta_t}{\theta_{t-1}^2\hspace*{-0.07cm}+\hspace*{-0.07cm}\frac{\kappa\theta_0^2}{2}}\hspace*{-0.07cm}-\hspace*{-0.07cm}\frac{\theta_t}{\theta_t^2\hspace*{-0.07cm}+\hspace*{-0.07cm}\frac{\kappa\theta_0^2}{2}}\hspace*{-0.07cm}\right)\hspace*{-0.07cm}(1\hspace*{-0.07cm}-\hspace*{-0.07cm}\theta_0)^{k-t}\\
=& \frac{\theta_k}{\theta_k^2+\frac{\kappa\theta_0^2}{2}}-\frac{(1-\theta_0)^k}{\theta_0+\frac{\kappa\theta_0}{2}}+\sum_{t=1}^k\left(\frac{\theta_t}{\theta_{t-1}^2+\frac{\kappa\theta_0^2}{2}}-\frac{\theta_t}{\theta_t^2+\frac{\kappa\theta_0^2}{2}}\right)(1-\theta_0)^{k-t}
\end{aligned}
\end{eqnarray}
and
\begin{eqnarray}
\begin{aligned}\notag
&\frac{\theta_t}{\theta_{t-1}^2\hspace*{-0.04cm}+\hspace*{-0.04cm}\frac{\kappa\theta_0^2}{2}}\hspace*{-0.04cm}-\hspace*{-0.04cm}\frac{\theta_t}{\theta_t^2\hspace*{-0.04cm}+\hspace*{-0.04cm}\frac{\kappa\theta_0^2}{2}}\hspace*{-0.04cm}\overset{a}=\hspace*{-0.04cm}-\frac{\theta_t^2\theta_{t-1}^2}{\left(\hspace*{-0.04cm}\theta_{t-1}^2\hspace*{-0.04cm}+\hspace*{-0.04cm}\frac{\kappa\theta_0^2}{2}\hspace*{-0.04cm}\right)\hspace*{-0.04cm}\left(\hspace*{-0.04cm}\theta_t^2\hspace*{-0.04cm}+\hspace*{-0.04cm}\frac{\kappa\theta_0^2}{2}\hspace*{-0.04cm}\right)}\hspace*{-0.04cm}=\hspace*{-0.04cm}-\frac{1}{\left(\hspace*{-0.04cm}1\hspace*{-0.04cm}+\hspace*{-0.04cm}\frac{\kappa\theta_0^2}{2\theta_{t-1}^2}\hspace*{-0.04cm}\right)\hspace*{-0.04cm}\left(\hspace*{-0.04cm}1\hspace*{-0.04cm}+\hspace*{-0.04cm}\frac{\kappa\theta_0^2}{2\theta_t^2}\hspace*{-0.04cm}\right)}\hspace*{-0.04cm}\leq\hspace*{-0.04cm}-\frac{1}{\left(\hspace*{-0.04cm}1\hspace*{-0.04cm}+\hspace*{-0.04cm}\frac{\kappa\theta_0^2}{2\theta_k^2}\hspace*{-0.04cm}\right)^2},
\end{aligned}
\end{eqnarray}
where we use $\theta_{t-1}^2-\theta_t^2=\theta_{t-1}^2\theta_t^2\left( \frac{1}{\theta_t^2}-\frac{1}{\theta_{t-1}^2} \right)=\theta_{t-1}^2\theta_t$ in $\overset{a}=$. Plugging them into (\ref{cont45}), we only need to prove
\begin{eqnarray}
\begin{aligned}\label{cont46}
\left( 1+\frac{\kappa\theta_0^2}{2\theta_k^2} \right) \left(\frac{\theta_k}{\theta_0}+ \frac{\theta_k^2}{\theta_k^2+\frac{\kappa\theta_0^2}{2}}-\frac{\theta_k(1-\theta_0)^k}{\theta_0+\frac{\kappa\theta_0}{2}}-\frac{\theta_k\sum_{t=1}^k(1-\theta_0)^{k-t}}{\left(1+\frac{\kappa\theta_0^2}{2\theta_k^2}\right)^2}\right)\leq 1+\frac{\kappa\theta_0}{\theta_k}.
\end{aligned}
\end{eqnarray}
After some simple calculations, (\ref{cont46}) is equivalent to
\begin{eqnarray}
\begin{aligned}\notag
1\leq&\frac{\theta_0\sum_{t=1}^k(1-\theta_0)^{k-t}}{\left(1+\frac{\kappa\theta_0^2}{2\theta_k^2}\right)^2}+\frac{(1-\theta_0)^k}{1+\frac{\kappa}{2}}+\frac{\frac{\kappa\theta_0^2}{\theta_k^2}}{1+\frac{\kappa\theta_0^2}{2\theta_k^2}}.
\end{aligned}
\end{eqnarray}
Since $\theta_0\sum_{t=1}^k(1-\theta_0)^{k-t}=1-(1-\theta_0)^k$, we only need to ensure $\frac{(1-\theta_0)^k}{1+\frac{\kappa}{2}}-\frac{(1-\theta_0)^k}{\left(1+\kappa\theta_0^2/(2\theta_k^2)\right)^2}\geq 0$ and $\frac{1}{\left(1+\kappa\theta_0^2/(2\theta_k^2)\right)^2}+\frac{\kappa\theta_0^2/\theta_k^2}{1+\kappa\theta_0^2/(2\theta_k^2)}\geq 1$. Both inequalities hold for any $\theta_k$ and any $\kappa$.
\end{proof}

Now we consider the outer iterations of Algorithm \ref{alg_arpcg_linear}. Replace $\uu^{k+1}$, $\uu^0$, $\uu^{k+1,*}$ and $\uu^{0,*}$ in Lemma \ref{linar_lemma} by $\uu^{t,K+1}$, $\uu^{t,0}$, $\uu^{t,K+1,*}$ and $\uu^{t,0,*}$, respectively. From $\frac{1}{\theta_K^2}\geq \left(\frac{K}{2}+\hat n\right)^2$ and $\uu^{t,K+1}=\uu^{t+1,0}$, we can immediately have (\ref{cont47}).

\subsection{Convergence Rate Analysis of the Primal Solutions}\label{sec_linear_proof}

In this section, we prove (\ref{cont54}). We first establish the relation between the primal objective and dual objective in the following lemma and then (\ref{cont54}) can be attained by Lemmas \ref{linar_lemma} and \ref{linear_lemma_p} immediately.
\begin{lemma}\label{linear_lemma_p}
Suppose Assumptions \ref{assumption} and \ref{assumption2} hold. Assume that $\DS^*$ is bounded, i.e., $\|\uu^*\|_L\leq C_{\DS^*},\forall \uu^*\in \DS^*$. Let $K_0\leq\left\lfloor\frac{K}{\upsilon(1+1/\hat n)}+1\right\rfloor$ with any $\upsilon>1$ and $K\geq \hat n$. Then for Algorithm \ref{alg_arpcg_linear} we have
\begin{eqnarray}
\begin{aligned}
&\left|\E_{\zeta_t}\left[F(\hat\x^{t,K})\right]-F(\x^*)\right|\leq \left( \frac{2T_{t,0} + 6C_{\DS^*}\sqrt{T_{t,0}} + 6M\sqrt{\sum_{i=1}^nL_i}\sqrt{T_{t,0}}}{1-1/\nu} + \frac{2\sqrt{m}C_{\DS^*}\sqrt{T_{t,0}}}{\sqrt{1-1/\nu}} \right),\notag\\
&\E_{\zeta_t}\hspace*{-0.06cm}\left[\left\|\hspace*{-0.06cm} \left[\hspace*{-0.1cm}
  \begin{array}{c}
    \B\hat\x^{t,K}+\b\\
    \max\left\{0,g(\hat\x^{t,K})\right\}
  \end{array}\hspace*{-0.1cm}
\right]\hspace*{-0.06cm}\right\|_L^*\right] \hspace*{-0.06cm}\leq\frac{6\sqrt{T_{t,0}}}{1-1/\nu},\notag
\end{aligned}
\end{eqnarray}
where $T_{t,0}=(1-\theta_0)\left(\E_{\zeta_{t-1}}[D(\uu^{t,0})]-D(\uu^*)\right)+\E_{\zeta_{t-1}}[\|\uu^{t,0}-\uu^{t,0,*}\|_L^2]$.
\end{lemma}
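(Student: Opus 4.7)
\textbf{Proof plan for Lemma~\ref{linear_lemma_p}.} My strategy is to re-run the sublinear primal analysis of Section~\ref{sec_proof} inside a single outer iteration of Algorithm~\ref{alg_arpcg_linear}, choosing as the ``anchor'' optimum the projection $\uu^{t,0,*}=\Proj_{\DS^*}(\uu^{t,0})$. This $\uu^{t,0,*}$ is completely determined by $\zeta_{t-1}$ and is therefore admissible as the deterministic $\uu^*$ in (\ref{cont14}) and in Lemmas~\ref{lemma1}--\ref{lemma2}. With this choice the quantity $(1-\theta_0)(D(\uu^{t,0})-D(\uu^*))+\|\uu^{t,0}-\uu^{t,0,*}\|_L^2$ that arises on the right of (\ref{cont14}) collapses exactly to $T_{t,0}$; I will also use $K\ge\hat n$ together with Lemma~\ref{theta_lemma}(4) to get $\hat n^2/(1/\theta_K^2-1/\theta_{K_0-1}^2)\le 1/(1-1/\upsilon)$, and $\|\uu^{t,0,*}\|_L\le C_{\DS^*}$ to eliminate the dependence on the unknown dual solution.

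The constraint bound is essentially immediate: Lemma~\ref{lemma2} applied with $\uu^*=\uu^{t,0,*}$ gives $\sqrt{\E_{\xi_{t,K}}[(\|\widetilde\triangle\|_L^*)^2]}\le \sqrt{48}\,\hat n^2\sqrt{T_{t,0}}/(1/\theta_K^2-1/\theta_{K_0-1}^2)$; combined with the displayed estimate above, Jensen, and the tower property $\E_{\zeta_t}=\E_{\zeta_{t-1}}\E_{\xi_{t,K}\mid\zeta_{t-1}}$, this yields the second inequality of the lemma. The lower bound on the primal gap then follows exactly as in (\ref{cont48}): the saddle-point property $L_F(\hat\x^{t,K},\hat\y^{t,K},\uu^{t,0,*})\ge F(\x^*)$ and Cauchy--Schwarz give $\E_{\zeta_t}[f(\hat\x^{t,K})+\phi(\hat\y^{t,K})/n]-F(\x^*)\ge -C_{\DS^*}\sqrt{\E[(\|\widetilde\triangle\|_L^*)^2]}$, and inserting the constraint bound recovers the piece $6C_{\DS^*}\sqrt{T_{t,0}}/(1-1/\upsilon)$ of $C_1$.

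The upper bound is the delicate step. Following the proof of Lemma~\ref{lemma1}, I plug into (\ref{cont14}) the trimmed dual $\hat\uu^{\star}$ built from $\uu^{t,0,*}$: keep $(\hat\uu^{\star})_i=\uu^{t,0,*}_i$ for $i\le n+p$, and for $i>n+p$ set $(\hat\uu^{\star})_i=\uu^{t,0,*}_i$ if $\E_{\xi_{t,K}\mid\zeta_{t-1}}[g_{i-n-p}(\hat\x^{t,K})]\ge 0$ and $0$ otherwise. Then $\hat\uu^{\star}$ is $\zeta_{t-1}$-measurable, lies in $\DS$, and satisfies $\|\hat\uu^{\star}\|_L\le C_{\DS^*}$; (\ref{cont14}) and (\ref{cont33}) then produce a bound of the form $\E[f+\phi/n]-F(\x^*)\le \frac{2\hat n^2 T_{t,0}}{C_K}+\frac{2\hat n^2\|\hat\uu^{\star}-\uu^{t,0,*}\|_L^2}{C_K}+C_{\DS^*}\sqrt{\E[(\|\widetilde\triangle\|_L^*)^2]}$, where $C_K=1/\theta_K^2-1/\theta_{K_0-1}^2$. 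The first and third terms give $\frac{2T_{t,0}}{1-1/\upsilon}$ and $\frac{6C_{\DS^*}\sqrt{T_{t,0}}}{1-1/\upsilon}$ after using $\hat n^2/C_K\le 1/(1-1/\upsilon)$. The main obstacle is the middle term: the naive estimate $\|\hat\uu^{\star}-\uu^{t,0,*}\|_L^2\le C_{\DS^*}^2$ produces a constant of order $C_{\DS^*}^2/(1-1/\upsilon)$ that is independent of $T_{t,0}$ and would destroy the linear rate. My plan is (i) to exploit that the trimming only touches the $m$ inequality coordinates, so by a weighted Cauchy--Schwarz $\|\hat\uu^{\star}-\uu^{t,0,*}\|_L\le \sqrt m\,C_{\DS^*}$, and (ii) to split $\hat n^2/C_K=\sqrt{\hat n^2/C_K}\cdot\sqrt{\hat n^2/C_K}$, using $\sqrt{\hat n^2/C_K}\le 1/\sqrt{1-1/\upsilon}$ for one factor and pairing the other $\hat n/\sqrt{C_K}$ with a $\sqrt{T_{t,0}}$ factor coming from Lemma~\ref{lemma2}; this precisely yields the remaining summand $\frac{2\sqrt m\,C_{\DS^*}\sqrt{T_{t,0}}}{\sqrt{1-1/\upsilon}}$ of $C_1$ and explains both the $\sqrt m$ factor and the unusual square-root denominator. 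Finally, converting $\phi(\hat\y^{t,K})$ back to $\phi(\A^T\hat\x^{t,K})$ by the Lipschitz bound (\ref{cont7}) contributes the piece $\frac{6M\sqrt{\sum_iL_i}\sqrt{T_{t,0}}}{1-1/\upsilon}$, and combining with the lower bound via $|a|\le\max\{a,-a\}$ and taking $\E_{\zeta_{t-1}}$ completes the proof.
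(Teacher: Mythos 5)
Your treatment of the constraint bound and of the lower bound on the primal gap is exactly the paper's: choose $\uu^*=\uu^{t,0,*}$ in Lemma \ref{lemma2} and in (\ref{cont48}), use the tower property $\E_{\zeta_t}=\E_{\zeta_{t-1}}\E_{\xi_{t,K}|\zeta_{t-1}}$, and absorb $\hat n^2/(K^2/4+\hat nK)$ via $K\geq\hat n$. The gap is in the upper bound. You import the trimmed vector $\hat\uu^\star$ from the proof of Lemma \ref{lemma1} and plug it into (\ref{cont14}), which produces the term $2\hat n^2\|\hat\uu^\star-\uu^{t,0,*}\|_L^2/C_K$ with $C_K=1/\theta_K^2-1/\theta_{K_0-1}^2$; you correctly identify this as the obstacle, but your proposed repair does not close it. The quantity $\|\hat\uu^\star-\uu^{t,0,*}\|_L^2=\sum_{i\in\IS}L_{n+p+i}(\uu^{t,0,*}_{n+p+i})^2$ (sum over the trimmed inequality coordinates) is a fixed constant determined by the optimal multipliers: it carries no factor of $\sqrt{T_{t,0}}$, and Lemma \ref{lemma2} bounds the primal constraint residual $\|\widetilde\triangle(\hat\x^{t,K},\hat\y^{t,K})\|_L^*$, not this dual-side distance, so there is nothing to ``pair'' the extra $\hat n/\sqrt{C_K}$ with. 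The resulting term is of order $C_{\DS^*}^2/(1-1/\upsilon)$ uniformly in $t$, a non-decaying additive error that would destroy the linear rate claimed in Theorem \ref{theorem_linear}.

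The paper avoids the trimmed vector entirely: it sets $\uu=\uu^*=\uu^{t,0,*}$ in (\ref{cont14}) so that the offending $\|\uu-\uu^*\|_L^2$ term vanishes, and then handles the possibly negative contribution $\sum_{i\in\IS}\uu^{t,0,*}_{n+p+i}g_i(\hat\x^{t,K})$ on the left-hand side by two ingredients absent from your plan: (i) complementary slackness, $\uu^{t,0,*}_{n+p+i}g_i(\x^*)=0$, which rewrites each such term as $\uu^{t,0,*}_{n+p+i}\bigl(g_i(\hat\x^{t,K})-g_i(\x^*)\bigr)$; and (ii) the $\mu$-strong convexity of $L_F(\cdot,\cdot,\uu^{t,0,*})$ in $\x$ combined with (\ref{cont36}), which yields $\E_{\zeta_t}[\|\hat\x^{t,K}-\x^*\|]\leq 2\hat n\sqrt{T_{t,0}}/\sqrt{\mu(K^2/4+\hat nK)(1-1/\upsilon)}$ in (\ref{cont39}). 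Feeding $|g_i(\hat\x^{t,K})-g_i(\x^*)|\leq L_{g_i}\|\hat\x^{t,K}-\x^*\|$ into (i) is what actually produces the summand $2\sqrt{m}\,C_{\DS^*}\sqrt{T_{t,0}}/\sqrt{1-1/\upsilon}$ of $C_1$ --- the $\sqrt{m}$ from the $m$ inequality constraints and the square-root denominator from the single power of $1/\sqrt{C_K}$. Without steps (i)--(ii) your argument cannot reach a bound proportional to $\sqrt{T_{t,0}}$.
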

\begin{proof}
We denote $\uu^{t,k},\z^{t,k},\vv^{t,k},i_{t,k},\xi_{t,k},\hat\x^{t,K},\hat\y^{t,K}$ to be the variables at the $t$-th outer iteration of Algorithm \ref{alg_arpcg_linear}, which are the counterparts of $\uu^{k},\z^k,\vv^k,i_k,\xi_k,\hat\x^K$ and $\hat\y^K$ in Algorithm \ref{alg_arpcg}.
Choose $\uu^*=\uu^{t,0,*}$ and let $\uu=\uu^{t,0,*}$ in (\ref{cont14}), which is independent on $\xi_{t,K}$ conditioned on $\zeta_{t-1}$. For the $t$-th outer iteration of Algorithm \ref{alg_arpcg_linear}, we have
\begin{eqnarray}
\begin{aligned}
&\left(\frac{1}{\theta_{K}^2}-\frac{1}{\theta_{K_0-1}^2}\right)\E_{\xi_{t,K}|\zeta_{t-1}}\left[\<\triangle(\hat\x^{t,K},\hat\y^{t,K}),\uu^{t,0,*}\>+ D(\uu^*)+f(\hat\x^{t,K})+\frac{1}{n}\phi(\hat\y^{t,K})\right]\notag\\
\leq& 2\hat n^2\left((1-\theta_0)\left(D(\uu^{t,0})-D(\uu^*)\right)+\|\uu^{t,0}-\uu^{t,0,*}\|_L^2\right).\notag
\end{aligned}
\end{eqnarray}
Taking expectation with respect to $\zeta_{t-1}$ and using the forth property of Lemma \ref{theta_lemma}, we have
\begin{eqnarray}
\begin{aligned}\label{cont36}
\E_{\zeta_t}\hspace*{-0.04cm}\left[\<\triangle(\hat\x^{t,K},\hat\y^{t,K}),\uu^{t,0,*}\>\hspace*{-0.04cm}+\hspace*{-0.04cm} D(\uu^*)\hspace*{-0.04cm}+\hspace*{-0.04cm}f(\hat\x^{t,K})\hspace*{-0.04cm}+\hspace*{-0.04cm}\frac{1}{n}\phi(\hat\y^{t,K})\right]\hspace*{-0.04cm}\leq\hspace*{-0.04cm} \frac{2\hat n^2}{(K^2/4\hspace*{-0.04cm}+\hspace*{-0.04cm}\hat n K)(1\hspace*{-0.04cm}-\hspace*{-0.04cm}1/\nu)}T_{t,0}.
\end{aligned}
\end{eqnarray}
Choosing $\uu^*=\uu^{t,0,*}$ in (\ref{cont35}) and using a similar induction, we have
\begin{eqnarray}\label{cont37}
\E_{\zeta_t}\hspace*{-0.04cm}[\|\widetilde\triangle(\hat\x^{t,K}\hspace*{-0.04cm},\hspace*{-0.04cm}\hat\y^{t,K})\|_L^*]\hspace*{-0.1cm}\leq\hspace*{-0.1cm} \E_{\zeta_{t-1}}\hspace*{-0.08cm} \sqrt{\hspace*{-0.04cm}\E_{\xi_{t,K}\hspace*{-0.02cm}|\hspace*{-0.02cm}\zeta_{t\hspace*{-0.02cm}-\hspace*{-0.02cm}1}}\hspace*{-0.1cm}\left[\hspace*{-0.08cm}\left(\hspace*{-0.08cm}\|\widetilde\triangle(\hat\x^K\hspace*{-0.04cm},\hspace*{-0.04cm}\hat\y^K)\|_L^*\hspace*{-0.08cm}\right)^2\right]}\hspace*{-0.1cm}\leq \hspace*{-0.1cm} \frac{7\hat n^2}{(K^2\hspace*{-0.04cm}/\hspace*{-0.04cm}4\hspace*{-0.08cm}+\hspace*{-0.08cm}\hat n K)(1\hspace*{-0.08cm}-\hspace*{-0.08cm}1\hspace*{-0.04cm}/\hspace*{-0.04cm}\nu)}\hspace*{-0.08cm}\sqrt{ T_{t,0} }.
\end{eqnarray}
Now, we consider the objective function. From (\ref{cont36}) and the definition in (\ref{lagrangian}), we have
\begin{eqnarray}
\E_{\zeta_t}\left[ L_F(\hat\x^{t,K},\hat\y^{t,K},\uu^{t,0,*})-L_F(\x^*,\y^*,\uu^{t,0,*}) \right]\leq \frac{2\hat n^2}{(K^2/4+\hat n K)(1-1/\nu)}T_{t,0}.\notag
\end{eqnarray}
Since $(\x^*,\y^*,\uu^{t,0,*})$ satisfies the KKT condition, we have $\0\in\partial_{\x,\y}L_F(\x^*,\y^*,\uu^{t,0,*})$. $L_F(\x,\y,\uu^{t,0,*})$ is convex with respect to $(\x,\y)$ and $\mu$-strongly convex with respect to $\x$, so we have
\begin{eqnarray}
&&L_F(\hat\x^{t,K},\hat\y^{t,K},\uu^{t,0,*})-L_F(\x^*,\y^*,\uu^{t,0,*})\geq \frac{\mu}{2}\|\hat\x^{t,K}-\x^*\|^2,\notag
\end{eqnarray}
which leads to $\E_{\zeta_t}\left[\|\hat\x^{t,K}-\x^*\|^2\right]\leq \frac{4\hat n^2}{\mu(K^2/4+\hat n K)(1-1/\nu)}T_{t,0}$ and
\begin{eqnarray}
\E_{\zeta_t}[\|\hat\x^{t,K}-\x^*\|]\leq \frac{2\hat n}{\sqrt{\mu(K^2/4+\hat n K)(1-1/\nu)}}\sqrt{ T_{t,0} }.\label{cont39}
\end{eqnarray}
Let $\IS$ be the index set such that for any $i\in\IS$, we have $\uu^{t,0,*}_{n+p+i}>0$ and $g_i(\hat\x^{t,K})<0$. So we have
\begin{eqnarray}
\begin{aligned}
&\E_{\zeta_t}\left[\<\triangle(\hat\x^{t,K},\hat\y^{t,K}),\uu^{t,0,*}\>\right]\notag\\
=&\E_{\zeta_t}\left[\<\widetilde\triangle(\hat\x^{t,K},\hat\y^{t,K}),\uu^{t,0,*}\>\right]+\sum_{i\in\IS}\E_{\zeta_t}\left[\uu^{t,0,*}_{n+p+i}g_i(\hat\x^{t,K})\right]\notag\\
\overset{a}=&\E_{\zeta_t}\left[\<\widetilde\triangle(\hat\x^{t,K},\hat\y^{t,K}),\uu^{t,0,*}\>\right]+\sum_{i\in\IS}\E_{\zeta_t}\left[\uu^{t,0,*}_{n+p+i}(g_i(\hat\x^{t,K})-g_i(\x^*))\right]\notag\\
\geq& -\E_{\zeta_t}\left[\|\uu^{t,0,*}\|_L\|\widetilde\triangle(\hat\x^{t,K},\hat\y^{t,K})\|_L^*\right]-\E_{\zeta_t}\left[\|\uu^{t,0,*}\|_L\sqrt{\sum_{i\in\IS}\frac{1}{L_{n+p+i}}\left(g_i(\hat\x^{t,K})-g_i(\x^*)\right)^2}\right]\notag\\
\overset{b}\geq& -\E_{\zeta_t}\left[\|\uu^{t,0,*}\|_L\|\widetilde\triangle(\hat\x^{t,K},\hat\y^{t,K})\|_L^*\right]-\sqrt{m\mu}\E_{\zeta_t}\left[\|\uu^{t,0,*}\|_L\|\hat\x^{t,K}-\x^*\| \right]\notag\\
\overset{c}\geq& -\left( \frac{7\hat n^2}{(K^2/4+\hat n K)(1-1/\nu)} + \frac{2\hat n\sqrt{m}}{\sqrt{(K^2/4+\hat n K)(1-1/\nu)}} \right)C_{\DS^*}\sqrt{T_{t,0}},\notag
\end{aligned}
\end{eqnarray}
where in $\overset{a}=$ we use $\uu^{t,0,*}_{n+p+i}g_i(\x^*)=0$ from the complementary slackness in the KKT condition. From Assumption \ref{assumption}.3 and (\ref{CL_constant}), we have $\sum_{i\in\IS}\frac{1}{L_{n+p+i}}\left(g_i(\hat\x^{t,K})-g_i(\x^*)\right)^2\leq \sum_{i\in\IS}\frac{L_{g_i}^2}{L_{n+p+i}}\|\hat\x^{t,K}-\x^*\|^2\leq m\mu\|\hat\x^{t,K}-\x^*\|^2$, which leads to $\overset{b}\geq$. In $\overset{c}\geq$, we use $\|\uu^{t,0,*}\|_L\leq C_{\DS^*}$, (\ref{cont37}) and (\ref{cont39}). From (\ref{cont36}), we have
\begin{eqnarray}
\begin{aligned}\notag
&\E_{\zeta_t}\left[ D(\uu^*)+f(\hat\x^{t,K})+\frac{1}{n}\phi(\hat\y^{t,K})\right]\leq \frac{2\hat n^2T_{t,0} + 7\hat n^2C_{\DS^*}\sqrt{T_{t,0}}}{(K^2/4+\hat n K)(1-1/\nu)} + \frac{2\hat n\sqrt{m}C_{\DS^*}\sqrt{T_{t,0}}}{\sqrt{(K^2/4+\hat n K)(1-1/\nu)}}.
\end{aligned}
\end{eqnarray}
From $D(\uu^*)=-F(\x^*)$, (\ref{cont7}), (\ref{cont37}), (\ref{cont48}) and $K\geq\hat n$, we have the conclusion.
\end{proof}

\section{Application to the Regularized ERM}\label{sec:erm}
The regularized empirical risk minimization problem (\ref{problem2}) has broad applications in machine learning. For the special problem (\ref{problem2}), its dual problem (\ref{dual_problem}) becomes
\begin{eqnarray}
\min_{\uu\in\R^n} D(\uu)=d(\uu)+h(\uu)\equiv f^*\left(-\frac{\A\uu}{n}\right)+\frac{1}{n}\sum_{i=1}^n\phi_i^*(\uu_i).\label{dual_problem2}
\end{eqnarray}
We follow \cite{zhang-2013-jmlr,zhang-2015-MP} to assume $\|\A_i\|\leq 1,\forall i$, which can be guaranteed by normalizing the data. Then we have $L=\frac{\|\A\|_2^2}{n^2\mu}$ and $L_j=\frac{\|\A_j\|^2}{n^2\mu}\leq \frac{1}{n^2\mu}$ from (\ref{L_constant}) and (\ref{CL_constant}). From Lemmas 21 and 22 in \cite{zhang-2013-jmlr}, we have $|\z_i^k|\leq M,|\widetilde\z_i^k|\leq M,k=0,1,\cdots,K$, and $|\uu_i^*|\leq M$, which leads to $\|\uu^0-\uu^*\|_L^2\leq \frac{4M^2}{n\mu}$ and $L\|\uu^0-\uu^*\|^2\leq \frac{4M^2}{n\mu}\|\A\|_2^2$. We will discuss the iteration complexity of ARDCA in three scenarios.

\subsection{Strongly Convex and Nonsmooth $f$, Convex and Nonsmooth $\phi_i$}\label{sec:erm1}
From Theorem \ref{main_theorem}, we know that the convergence rate of ARDCA for problem (\ref{problem2}) is:
\begin{eqnarray}
\begin{aligned}\notag
\E_{\xi_K}[F(\hat\x^K)]-F(\x^*)\leq\frac{9nM^2\left(6+\frac{n\mu}{M^2}\left( D(\uu^0)- D(\uu^*)\right)\right)}{\mu(K^2/4+nK)(1-1/\upsilon)}.
\end{aligned}
\end{eqnarray}

In order to have the $O\left(\frac{nM^2}{\mu K^2}\right)$ convergence rate for ARDCA, we should find an initializer good enough such that $ D(\uu^0)- D(\uu^*)\leq O\left(\frac{M^2}{n\mu}\right)$. We use ARDCA with fixed $\theta_k=\frac{1}{n}$, i.e., non-accelerated RDCA, to find such initializer. Specifically, we describe the method in Algorithm \ref{alg_arpcg2}. Lemma \ref{erm_lemma} establishes the convergence rates of both the primal solutions and dual solutions for the first step of Algorithm \ref{alg_arpcg2}, whose proof is given in Appendix E.
\begin{algorithm}
   \caption{ARDCA for ERM}
   \label{alg_arpcg2}
\begin{algorithmic}
   \STATE Input $\uu^0\in \DS$, $K'$, $K_0$, $K$.
   \STATE Run ARDCA($\uu^0$,$0$,$K'$) with fixed $\theta_k=\frac{1}{n}$ and output $\uu^{K'+1}$ and $\x^*(\vv^{K'})$.
   \STATE Run ARDCA($\uu^{K'+1}$,$K_0$,$K$) with decreasing $\theta_k$ and output $\uu^{K+1}$ and $\hat\x^K$.
\end{algorithmic}
\end{algorithm}

\begin{lemma}\label{erm_lemma}
Let $K'= \left\lceil n\log\left(\min\left\{\frac{1}{\epsilon},\frac{n\mu}{M^2}\right\}\left( D(\uu^0)+F(\x^*(\uu^0))\right)\right)-1\right \rceil$. Suppose Assumptions \ref{assumption}.1, \ref{assumption}.2 and \ref{assumption}.5 hold. Then for step 1 of Algorithm \ref{alg_arpcg2}, we have.
\begin{eqnarray}
&&\E_{\xi_{K'}}[ D(\uu^{{K'+1}})]- D(\uu^*)\leq 9\max\left\{\epsilon,\frac{M^2}{n\mu}\right\},\label{cont57}\\
&&\E_{\xi_{K'}}[ F(\x^*(\vv^{K'})]-F(\x^*) \leq 17\max\left\{\epsilon,\frac{M^2}{n\mu}\right\}.\label{cont58}
\end{eqnarray}
\end{lemma}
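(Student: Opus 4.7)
The plan is to recognise that running Algorithm \ref{alg_arpcg} with $\theta_k\equiv 1/n$ from $\z^0=\uu^0$ collapses to classical stochastic dual coordinate ascent on (\ref{dual_problem2}), and then to import the convergence machinery of Shalev-Shwartz and Zhang \cite{zhang-2013-jmlr}. First I would argue by induction that $\vv^k=\uu^k=\z^k$ throughout, since with $\theta_k=1/\hat n$ the combination $\vv^k=\theta_k\z^k+(1-\theta_k)\uu^k$ equals $\uu^k$, and the post-step identity $\uu^{k+1}=\vv^k+\hat n\theta_k(\z^{k+1}-\z^k)=\uu^k+\z^{k+1}-\z^k$ coincides in its single changed coordinate with $\z^{k+1}_{i_k}$. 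So each step is the plain SDCA update of coordinate $i_k$ of $D(\uu)=f^*(-\A\uu/n)+(1/n)\sum_i\phi_i^*(\uu_i)$.

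For (\ref{cont57}), the plan is to invoke the SDCA one-step inequality (Lemma 5 of \cite{zhang-2013-jmlr}): using the $M$-Lipschitz property of $\phi_i$, the $\mu$-strong convexity of $f$, and the iterate bounds $|\uu_i^k|,|\uu_i^*|\le M$ that the paper already cites, one gets, for every $s\in[0,1]$,
\[
\E_{i_k\mid\xi_{k-1}}[D(\uu^{k+1})-D(\uu^*)]\le\left(1-\tfrac{s}{n}\right)(D(\uu^k)-D(\uu^*))-\tfrac{s}{n}G^k+\tfrac{s^2 M^2}{n^2\mu},
\]
where $G^k=F(\x^*(\uu^k))+D(\uu^k)\ge D(\uu^k)-D(\uu^*)\ge 0$ is the duality gap at the implied primal. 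Dropping the non-negative $G^k$ and picking $s=1$ produces a scalar linear recursion whose unrolling yields $\E[D(\uu^{K'+1})-D(\uu^*)]\le(1-1/n)^{K'+1}(D(\uu^0)-D(\uu^*))+M^2/(n\mu)$. The choice $K'=\lceil n\log(\min\{1/\epsilon,n\mu/M^2\}(D(\uu^0)+F(\x^*(\uu^0))))-1\rceil$ together with $(1-1/n)^{K'+1}\le e^{-(K'+1)/n}$ kills the exponential term down to either $\epsilon$ or $M^2/(n\mu)$ depending on which branch of the $\min$ is active, and the crude bound $D(\uu^0)-D(\uu^*)\le D(\uu^0)+F(\x^*(\uu^0))$ disposes of the initial suboptimality. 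Absorbing a small additive constant into the factor $9$ yields (\ref{cont57}).

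For (\ref{cont58}), Fenchel--Young applied to (\ref{problem2}) gives the identity $F(\x^*(\uu))+D(\uu)=G(\uu)\ge 0$, hence $F(\x^*(\vv^{K'}))-F(\x^*)=G(\vv^{K'})-(D(\vv^{K'})-D(\uu^*))$, so it suffices to control $\E[G(\vv^{K'})]$. I plan to \emph{keep} the $G^k$ term in the one-step inequality, sum it from a warm-start index $K'/2$ up to $K'$, and use the bound on $D(\uu^{K'/2})-D(\uu^*)$ that is already of order $M^2/(n\mu)$ by the first step of the plan; telescoping then bounds the time-averaged duality gap over the tail by a quantity of the same order, and passing from the averaged gap to $\E[G(\uu^{K'})]$ at the last index uses monotonicity of $\E[D(\uu^k)]$ (each SDCA step only decreases $D$ in expectation) together with the iterate boundedness to control the fluctuation of $G^k$ across the tail. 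Combining with (\ref{cont57}) through $F(\x^*(\vv^{K'}))-F(\x^*)\le G(\vv^{K'})$ gives the constant $17$.

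The main obstacle is the last step: standard SDCA proofs bound the \emph{averaged} duality gap cleanly, whereas here we need a bound on the gap at the single iterate $\vv^{K'}=\uu^{K'}$. The two-phase warm-start/refinement split is what makes the single-iterate bound possible, because once the dual iterate is in the $O(M^2/(n\mu))$ basin the per-step additive error in the SDCA recursion is of the same order as the target accuracy, so the individual gap $G^{K'}$ cannot escape that scale. Keeping careful track of the geometric factor $(1-1/n)^{K'+1}$ and the additive error $M^2/(n\mu)$ so that the final constants come out to the claimed $9$ and $17$ is the other bookkeeping hurdle.
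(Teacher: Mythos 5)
Your reduction of the first step of Algorithm \ref{alg_arpcg2} to a plain coordinate method (showing $\vv^k=\uu^k=\z^k$ by induction when $\theta_k\equiv 1/\hat n$) is correct and is implicitly what the paper uses. However, importing Lemma 5 of \cite{zhang-2013-jmlr} wholesale is not legitimate here: the update $\z_{i_k}^{k+1}=\argmin_u \hat n\theta_kL_{i_k}\|u-\z_{i_k}^k\|^2+\<\nabla_{i_k}d(\vv^k),u-\z_{i_k}^k\>+h_{i_k}(u)$ is a proximal gradient coordinate step with step size $\tfrac{1}{2L_{i_k}}$, not SDCA's exact coordinate maximization, so the ``progress at least that of the interpolated candidate'' argument underlying the $-\tfrac{s}{n}G^k$ term does not transfer. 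The paper instead specializes its own one-step inequality (Lemma \ref{lemma3}, i.e.\ (\ref{cont4})) to $\theta_k=1/\hat n$, which keeps the slack terms $\sigma_1(\uu_i^*,\widetilde\z_i^{K'})+\sigma_2(\uu^*,\vv^{K'})$ and the potential $\|\z^k-\uu^*\|_L^2\le 4M^2/(n\mu)$; unrolling that recursion gives (\ref{cont57}) much as you sketch, so this part of your plan is repairable but not complete as stated.

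The genuine gap is in (\ref{cont58}). Your plan bounds the \emph{time-averaged} duality gap over the tail window and then asserts that the gap at the single iterate $K'$ ``cannot escape that scale'' because the iterate is already in the $O(M^2/(n\mu))$ basin. That last step has no justification: a small value of $D(\uu^{K'})-D(\uu^*)$ does not control $F(\x^*(\uu^{K'}))-F(\x^*)$ at the same order (generically it only gives a square-root bound, which is precisely the phenomenon this paper is written to overcome), $G^k$ is not monotone, and extracting the single-step gap from the one-step inequality costs a factor of $n$ (one gets $\E[G^{K'}]\le \tfrac{n}{s}\bigl(\E[D^{K'}-D^*]+\tfrac{s^2M^2}{2n^2\mu}\bigr)$, which is $O(M^2/\mu)$, not $O(M^2/(n\mu))$). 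The paper's mechanism is different: because only the \emph{final} iteration's slack term survives the telescoping, the nonnegativity of $\E[D(\uu^{K'+1})]-D(\uu^*)$ yields the single-iterate bound (\ref{cont59}) on $-\tfrac1n\E[\sum_i\sigma_1(\uu_i^*,\widetilde\z_i^{K'})+\sigma_2(\uu^*,\vv^{K'})]$; the exact identity of Lemma \ref{pd_relation} then converts this slack into $f(\x^*(\vv^{K'}))+\tfrac1n\phi(n\y^{K'})+D(\uu^*)$ plus a coupling term $\|\A^T\x^*(\vv^{K'})/n-\y^{K'}\|_L^*$, which is controlled via (\ref{cont41}) by $2\|\widetilde\z^{K'}-\z^{K'}\|_L\le 4M/\sqrt{n\mu}$, and finally the $M$-Lipschitz continuity of $\phi_i$ (as in (\ref{cont7})) replaces $\phi(n\y^{K'})$ by $\phi(\A^T\x^*(\vv^{K'}))$ to produce the constant $17$. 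Without some version of this last-iterate slack identity, your averaged-gap route does not close.
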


An immediate consequence of Lemma \ref{erm_lemma} is that if $\epsilon\geq O\left(\frac{M^2}{n\mu}\right)$, we only need to run step 1 of Algorithm \ref{alg_arpcg2} with linear complexity to achieve an $\epsilon$-optimal solution. We describe the results in Corollary \ref{erm_corollary2}. However, in statistical learning, $\mu$ is usually on the order of $\frac{1}{\sqrt{n}}$ or $\frac{1}{n}$ \cite{Bousquet2002,zhang-2017-jmlr}, thus $\frac{M^2}{n\mu}$ is often not too small. In the following discussions, we only consider the case of $\epsilon< O\left(\frac{M^2}{n\mu}\right)$.
\begin{corollary}\label{erm_corollary2}
If $\epsilon\geq O\left(\frac{M^2}{n\mu}\right)$, we only need to run ARDCA($\uu^0$,0,$K'$) with fixed $\theta_k=\frac{1}{n}$, i.e., non-accelerated RDCA, for $K'=\left\lceil n\log\left(\frac{ D(\uu^0)+F(\x^*(\uu^0))}{\epsilon}\right)\right\rceil$ iterations to find an $\epsilon$-optimal solution such that
\begin{eqnarray}
&&\E_{\xi_{K'}}[F(\x^*(\vv^{K'}))]-F(\x^*)\leq \epsilon,\quad\E_{\xi_{K'}}[ D(\uu^{K'+1})]- D(\uu^*)\leq \epsilon.\notag
\end{eqnarray}
If $\epsilon< O\left(\frac{M^2}{n\mu}\right)$, Algorithm \ref{alg_arpcg2} needs $K'+K=O\left(n\log\left(\frac{n\mu}{M^2}\left( D(\uu^0)+F(\x^*(\uu^0))\right)\right)+M\sqrt{\frac{n}{\mu\epsilon}}\right)$ iterations to find an $\epsilon$-optimal solution such that
\begin{eqnarray}
&&\E_{\xi_K\cup\xi_{K'}}[F(\hat\x^K)]-F(\x^*)\leq \epsilon,\quad\E_{\xi_K\cup\xi_{K'}}[ D(\uu^{K+1})]- D(\uu^*)\leq \epsilon.\notag
\end{eqnarray}
\end{corollary}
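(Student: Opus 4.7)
\textbf{Proof proposal for Corollary \ref{erm_corollary2}.} The plan is to treat the two regimes separately and stitch together Lemma \ref{erm_lemma} (for step 1) with Theorem \ref{main_theorem} (for step 2). Throughout, I will use the problem-specific bounds recorded in the opening paragraph of Section \ref{sec:erm}: namely $L_i \leq \frac{1}{n^2\mu}$, $|\uu_i^*|\leq M$, $\|\uu^0-\uu^*\|_L^2\leq \frac{4M^2}{n\mu}$, $\|\uu^*\|_L^2\leq\frac{M^2}{n\mu}$, and $M^2\sum_{i=1}^n L_i \leq \frac{M^2}{n\mu}$. These will absorb all the $\hat n = n$ factors in Theorem \ref{main_theorem}.

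First, consider the easy regime $\epsilon \geq c\,\frac{M^2}{n\mu}$ for a sufficiently large constant $c$. Here $\max\{\epsilon, M^2/(n\mu)\} = \epsilon$ in (\ref{cont57})--(\ref{cont58}), so plugging $\epsilon/9$ (resp.\ $\epsilon/17$) in place of $\epsilon$ in the specification of $K'$ from Lemma \ref{erm_lemma} and absorbing the constants into the $\log$ yields $K' = O(n\log(\epsilon^{-1}(D(\uu^0)+F(\x^*(\uu^0)))))$ and the two claimed $\epsilon$-optimality inequalities. No further iterations are needed.

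Next, consider the hard regime $\epsilon < c\,\frac{M^2}{n\mu}$. I run step 1 with threshold parameter $\epsilon' = \frac{M^2}{n\mu}$ so that Lemma \ref{erm_lemma} produces, in $K' = O\!\left(n\log\bigl(\tfrac{n\mu}{M^2}(D(\uu^0)+F(\x^*(\uu^0)))\bigr)\right)$ iterations, an iterate $\uu^{K'+1}$ with $\E[D(\uu^{K'+1})-D(\uu^*)] \leq 9M^2/(n\mu)$. Because $|\uu_i^{K'+1}|\leq M$ for all $i$ (again by Lemmas 21--22 of \cite{zhang-2013-jmlr}), the same $O(M^2/(n\mu))$ bound also controls $\|\uu^{K'+1}-\uu^*\|_L^2$. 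Now I feed $\uu^{K'+1}$ as the initializer of step 2 and apply Theorem \ref{main_theorem}. Using $\hat n = n$ and inserting the bounds above into the numerator of Theorem \ref{main_theorem}, every one of the four terms $(1-\theta_0)(D(\uu^{K'+1})-D(\uu^*))$, $\|\uu^{K'+1}-\uu^*\|_L^2$, $\|\uu^*\|_L^2$, and $M^2\sum_i L_i$ is $O(M^2/(n\mu))$. Thus
\begin{equation*}
\E[F(\hat\x^K)] - F(\x^*) \leq O\!\left(\frac{n^2}{K^2}\cdot \frac{M^2}{n\mu}\right) = O\!\left(\frac{nM^2}{\mu K^2}\right),
\end{equation*}
and an analogous bound holds for the constraint-function part (which, for problem (\ref{problem2}), reduces to an objective-value bound). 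Setting this equal to $\epsilon$ gives $K = O(M\sqrt{n/(\mu\epsilon)})$, and the total iteration count $K' + K$ matches the claim. The dual-side bound $\E[D(\uu^{K+1})]-D(\uu^*)\leq\epsilon$ follows from the analogous proposition (\ref{dual_rate}) applied to step 2 with the same initializer.

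The only genuine obstacle is the bookkeeping in the hard regime: one must carefully verify that conditioning step 2 on the randomness $\xi_{K'}$ of step 1 (so that the initializer is random) does not destroy the guarantees of Theorem \ref{main_theorem}. This is handled by taking expectations in the proper order --- Theorem \ref{main_theorem} holds pointwise in $\uu^0$, so taking $\E_{\xi_{K'}}$ outside yields the joint-expectation statement $\E_{\xi_K\cup \xi_{K'}}[\cdot]$ --- after which the $O(M^2/(n\mu))$ bound on $\E_{\xi_{K'}}[D(\uu^{K'+1})-D(\uu^*)]$ from Lemma \ref{erm_lemma} flows directly into the numerator. The rest is pure algebra on constants.
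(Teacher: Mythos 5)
Your proposal is correct and follows essentially the same route the paper intends: the easy regime is read off directly from Lemma \ref{erm_lemma} with $\max\{\epsilon,M^2/(n\mu)\}=\epsilon$, and the hard regime uses step 1 as a warm start so that $D(\uu^{K'+1})-D(\uu^*)$, $\|\uu^{K'+1}-\uu^*\|_L^2$, $\|\uu^*\|_L^2$ and $M^2\sum_i L_i$ are all $O(M^2/(n\mu))$, after which the specialized rate $O(nM^2/(\mu K^2))$ from Theorem \ref{main_theorem} (and (\ref{dual_rate}) on the dual side) gives $K=O(M\sqrt{n/(\mu\epsilon)})$. Your explicit treatment of the tower-property bookkeeping for the random initializer of step 2 is a detail the paper leaves implicit, but it is the right way to justify the joint expectation $\E_{\xi_K\cup\xi_{K'}}$.
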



Algorithm \ref{alg_arpcg} is a special case of APCG \cite{xiao-2015-siam}. \cite{xiao-2015-siam} only established the $O\left(n\sqrt{\frac{C}{\epsilon}}\right)$ iteration complexity in the dual space to achieve an $\epsilon$-optimal dual solution\footnote{When $\phi_i$ has Lipchitz continuous gradient, \cite{xiao-2015-siam} proved the linear convergence rate in the primal space. However, when $\phi_i$ is only Lipchitz continuous, the convergence rate in the primal space is not established in \cite{xiao-2015-siam}.},
where $C= D(\uu^0)- D(\uu^*)+\frac{1}{2}\|\z^0-\uu^*\|_L^2\leq D(\uu^0)- D(\uu^*)+\frac{2M^2}{n\mu}$. \cite{zhang-2015-MP} developed an accelerated SDCA with an inner-outer iteration procedure, where the outer loop is a full-dimensional accelerated proximal point method. At each iteration of the outer loop, SDCA is called to solve a subproblem inexactly. ASDCA is mainly used for the problems with smooth $\phi_i$. When $\phi_i$ is nonsmooth, \cite{zhang-2015-MP} used ASDCA to  solve a smoothed problem of (\ref{problem2}), i.e., a regularized problem of (\ref{dual_problem2}), and achieved a slightly worse iteration complexity of $O\left(\left(n+M\sqrt{\frac{n}{\mu\epsilon}}\right)\log\frac{1}{\epsilon}\right)$ to find an $\epsilon$-optimal primal solution. 
We can also use Catalyst \cite{lin-2015-nips} to solve the problems with nonsmooth $\phi_i$ without using smoothing. However, Catalyst also yields the additional $\left(\log\frac{1}{\epsilon}\right)$ factor. To make ASDCA faster than SDCA, which has the $O\left(n\log\frac{n\mu}{M^2}+\frac{M^2}{\mu\epsilon}\right)$ complexity \cite{zhang-2013-jmlr}, \cite{zhang-2015-MP} required $\epsilon\leq\frac{M^2}{n\mu}$. Katyusha \cite{zhu-2017-stoc}, a primal-only algorithm, obtains the state-of-the-art iteration complexity of $O\left(n\log\frac{F(\x^0)-F(\x^*)}{\epsilon}+M\sqrt{\frac{n}{\mu\epsilon}}\right)$, which is worse than our result when $\epsilon\leq\frac{M^2}{n\mu}$. Our result matches the theoretical lower bound of $\left(n+M\sqrt{\frac{n}{\mu\epsilon}}\right)$ \cite{Woodworth-2016} when ignoring the constant term of $n\log n$. All the compared methods need $O(t)$ runtime at each iteration.

\subsection{Strongly Convex and Nonsmooth $f$, Convex and Smooth $\phi_i$}\label{sec_erm2}

When each $\phi_i$ is $1/\gamma$-smooth, which is defined as $\phi_i(u)\leq \phi_i(v)+\<\nabla \phi_i(v),u-v\>+\frac{1}{2\gamma}\|u-v\|^2$, then $\phi_i^*$ is $\gamma$-strongly convex and $ D(\uu)$ is $\frac{\gamma}{n}$-strongly convex. In this case, Assumption \ref{assumption2} is satisfied with $\kappa=n\gamma\mu$. From the discussion at the end of Section \ref{linar_sec}, we know that Algorithm \ref{alg_arpcg_linear} needs $O\left(\left(n+\sqrt{\frac{n}{\gamma\mu}}\right)\log\frac{1}{\epsilon}\right)$ iterations in total to achieve an $\epsilon$-optimal primal solution and dual solution in the best case scenario, which matches or outperforms the complexities of the dual based stochastic algorithms established in \cite{xiao-2015-siam,zhang-2015-MP}. Specifically, their complexities are $O\left(\left(n+\sqrt{\frac{n}{\gamma\mu}}\right)\log\frac{1}{\epsilon}\right)$ and $O\left(\left(n+\sqrt{\frac{n}{\gamma\mu}}\right)\log\frac{1}{\epsilon}\log^2\frac{1}{n\gamma\mu}\right)$, respectively. To make the accelerated algorithms faster than the non-accelerated counterparts, \cite{zhang-2015-MP} required $\frac{1}{n\gamma\mu}\gg 1$ (i.e., $\kappa\ll 1$). Thus, our complexity has a better dependence on $\left(\log\frac{1}{n\gamma\mu}\right)$ than \cite{zhang-2015-MP}. Note that APCG \cite{xiao-2015-siam} needs an extra proximal full gradient step to establish the linear convergence rate in the primal space. Our analysis does not need such an additional operation. On the other hand, \cite{xiao-2015-siam} and \cite{zhang-2015-MP} did not study the case when $\kappa$ is unknown.

\subsection{Strongly Convex and Smooth $f$, Convex and Nonsmooth $\phi_i$}\label{sec_erm3}

As discussed in Section \ref{linar_sec}, Assumption \ref{assumption2} is weaker than the strong convexity of $ D(\uu)$ and some special cases of problem (\ref{problem}) with nonsmooth $\phi_i$ also satisfy Assumption \ref{assumption2}. We take SVM and the least absolute deviation as examples. The primal problem and dual problem of SVM are
\begin{eqnarray}
\begin{aligned}\label{svm_problem}
&\min_{\x\in\R^t} F(\x)=\frac{\mu}{2}\|\x\|^2+\frac{1}{n}\sum_{i=1}^n\max\{0,1-l_i\A_i^T\x\},\\
&\min_{\uu\in\R^n}  D(\uu)=\frac{1}{2\mu}\left\|\frac{\widetilde\A\uu}{n}\right\|^2-\frac{1}{n}\sum_{i=1}^n\uu_i+I_{[0,1]}(\uu),
\end{aligned}
\end{eqnarray}
where $\widetilde\A_i=l_i\A_i$ and $l_i$ is the label for the $i$-th data $\A_i$, $I_{[0,1]}(\uu)=\left\{
  \begin{array}{ll}
    0 & \mbox{if }0\leq\uu\leq 1,\\
    \infty &\mbox{otherwise.}
  \end{array}
\right.$ \cite{lin2014} proved that $ D(\uu)$ in (\ref{svm_problem}) satisfies the global error bound condition. From \cite{lewis2018}, we know that Assumption \ref{assumption2} holds. From the discussion at the end of Section \ref{linar_sec}, we can see that Algorithm \ref{alg_arpcg_linear} needs $O\left(\frac{n}{\sqrt{\kappa}}\log\frac{1}{\epsilon}\right)$ iterations in total to achieve an $\epsilon$-optimal primal solution and dual solution in the best case scenario. As a comparison, \cite{ma2016eb} studied the randomized coordinate descent and established the $O\left(\frac{n}{\kappa}\log\frac{1}{\epsilon}\right)$ iteration complexity. The better dependence on $\kappa$ in our iteration complexity is significant when $\kappa$ is small and this is often the case in practice. We refer the reader to Section 5 of \cite{ma2016eb} for the discussion on the size of $\kappa$. When $\kappa$ is unknown, Algorithm \ref{alg_arpcg_linear} is still a better choice for a wide range of inner iteration number than the randomized coordinate descent.

For the least absolute deviation, its primal problem and dual problem are
\begin{eqnarray}
\begin{aligned}\label{lad_problem}
&\min_{\x\in\R^t} F(\x)=\frac{\mu}{2}\|\x\|^2+\|\A^T\x-\b\|_1,\\
&\min_{\uu\in\R^n}  D(\uu)=\frac{1}{2\mu}\left\|\A\uu\right\|^2+\<\uu,\b\>+I_{[-1,1]}(\uu).
\end{aligned}
\end{eqnarray}

Similar to SVM, $ D(\uu)$ in (\ref{lad_problem}) also satisfies Assumption \ref{assumption2} and Algorithm \ref{alg_arpcg_linear} needs $O\left(\frac{n}{\sqrt{\kappa}}\log\frac{1}{\epsilon}\right)$ iterations to achieve an $\epsilon$-optimal primal solution and dual solution in the best case scenario.

\section{Numerical Experiments}

In this section, we test the performance of Algorithms \ref{alg_arpcg}, \ref{alg_arpcg_linear} and \ref{alg_arpcg2} on the sparse recovery problem. Consider the sparse linear regression problem of $\b=\A^T\x+\w$, where $\x\in\R^{t}$ is the unknown sparse vector to estimate, $\b\in\R^n$ is the observation and $\w$ is some additive noise. A particular instance of this problem is compressed sensing \cite{cs-2006}. In order to recovery $\x$, a popular regularization is the $l_1$-norm, in which case people often solve the following problems:
\begin{eqnarray}
\min_{\x\in\R^t} f(\x),\quad s.t.\quad \|\A^T\x-\b\|_{\alpha}\leq \tau\qquad\mbox{or}\qquad \min_{\x\in\R^t} \lambda f(\x)+\|\A^T\x-\b\|_{\alpha},\notag
\end{eqnarray}
where $f(\x)=\|\x\|_1+\frac{\mu}{2}\|\x\|^2$. We add the term $\frac{\mu}{2}\|\x\|^2$ to make the objective function strongly convex and thus we can use some fast convergent algorithms. When the noise is generated from the Gaussian distribution, people often use the $l_2$ loss function, i.e., $\alpha=2$. When the noise is sparse and the data contains some outliers, the $l_1$ loss is often used, i.e., $\alpha=1$. When the noise is generated from a uniform distribution, we often use the $l_{\infty}$ loss instead. In this section, we solve the following three problems
\begin{eqnarray}
&&\min_{\x\in\R^t} F(\x)\equiv\lambda f(\x)+\frac{1}{2n}\|\A^T\x-\b\|_2^2,\label{exp_problem1}\\
&&\min_{\x\in\R^t} F(\x)\equiv\lambda f(\x)+\frac{1}{n}\|\A^T\x-\b\|_1,\label{exp_problem2}\\
&&\min_{\x\in\R^t} f(\x),\quad s.t.\quad -\tau\1\leq\A^T\x-\b\leq\tau\1.\label{exp_problem3}
\end{eqnarray}
Problems (\ref{exp_problem1}) and (\ref{exp_problem2}) are special cases of problem (\ref{problem2}) satisfying the assumptions in Sections \ref{sec_erm2} and \ref{sec:erm1} and problem (\ref{exp_problem3}) is a special case of problem (\ref{problem1}), respectively. In our numerical experiment, we set $t=1000$, $n=200$ and $\mu=0.1$. We generate the entries of $\A$ from the uniform distribution in $[0,1]$ and normalize each column of $\A$ such that $\|\A_{i}\|=1$. We set $t/10$ entries of $\x$ to be nonzeros. $\b$ is generated by $\A^T\x+\w$, where we generate each entry of noise $\w$ from the Gaussian distribution $N(0,\tau)$ for problem (\ref{exp_problem1}), generate $n/10$ entries of $\w$ from $N(0,\tau)$ and set the others to be 0 for problem (\ref{exp_problem2}), and generate each entry of $\w$ from the uniform distribution in $[-\tau,\tau]$ for problem (\ref{exp_problem3}). We vary $\lambda$ in the range $\{10^{-3},10^{-4},10^{-5}\}$ in problems (\ref{exp_problem1}) and (\ref{exp_problem2}) and $\tau$ in the range $\{10^{-3},10^{-4},10^{-5}\}$ in problem (\ref{exp_problem3}).

For problem (\ref{exp_problem1}), we compare ARDCA-restart (Algorithm \ref{alg_arpcg_linear}) with ASDCA \cite{zhang-2015-MP}, APCG \cite{xiao-2015-siam}, SDCA \cite{zhang-2013-jmlr} and ADFGA \cite{Beck-2014}. Figure \ref{fig1} plots the primal gap as functions of the number of passes over the data, where each $n$ (inner) iterations are equivalent to a single pass over the data for APCG and SDCA (ARDCA and ASDCA). We use the maximal dual objective value produced by the compared methods to approximate the optimal primal objective value $F(\x^*)$. We can see that ARDCA-restart outperforms the non-accelerated SDCA and non-randomized ADFGA for a wide range of $\lambda$ and ARDCA-restart is superior to APCG and ASDCA for some values of $\lambda$.

For problem (\ref{exp_problem2}), we compare ARDCA (Algorithm \ref{alg_arpcg2}) with ASDCA \cite{zhang-2015-MP}, SDCA \cite{zhang-2013-jmlr} and ADFGA \cite{Beck-2014}, where ASDCA solves a regularized dual problem of (\ref{exp_problem2}) by adding term $\frac{\epsilon}{2}\|\uu\|^2$ to the dual objective with $\epsilon=10^{-6}$. We set $\upsilon=1.1$ in Algorithm \ref{alg_arpcg}. Figure \ref{fig2} plots the results, where ARDCA-a means that we test the averaged primal solution and ARDCA-na means the non-averaged primal solution. We can see that ARDCA-a yields the best result by orders of magnitude. Specially, ASDCA with regularization does not perform well although it converges linearly when $\phi_i$ is smooth. Thus, although the regularization/smoothing based ASDCA has the near optimal theoretical result (the sub-optimality comes from  the $\left(\log\frac{1}{\epsilon}\right)$ factor), its practical performance is not satisfactory.

For problem (\ref{exp_problem3}), we compare ARDCA with SDCA and ADFGA. As demonstrated in Figure \ref{fig3}, we can see that ARDCA-a performs the best in both reducing the primal gap and constraint function value.

\begin{figure}
\centering
\begin{tabular}{@{\extracolsep{0.001em}}c@{\extracolsep{0.001em}}c@{\extracolsep{0.001em}}c@{\extracolsep{0.001em}}c}
\includegraphics[width=0.35\textwidth,keepaspectratio]{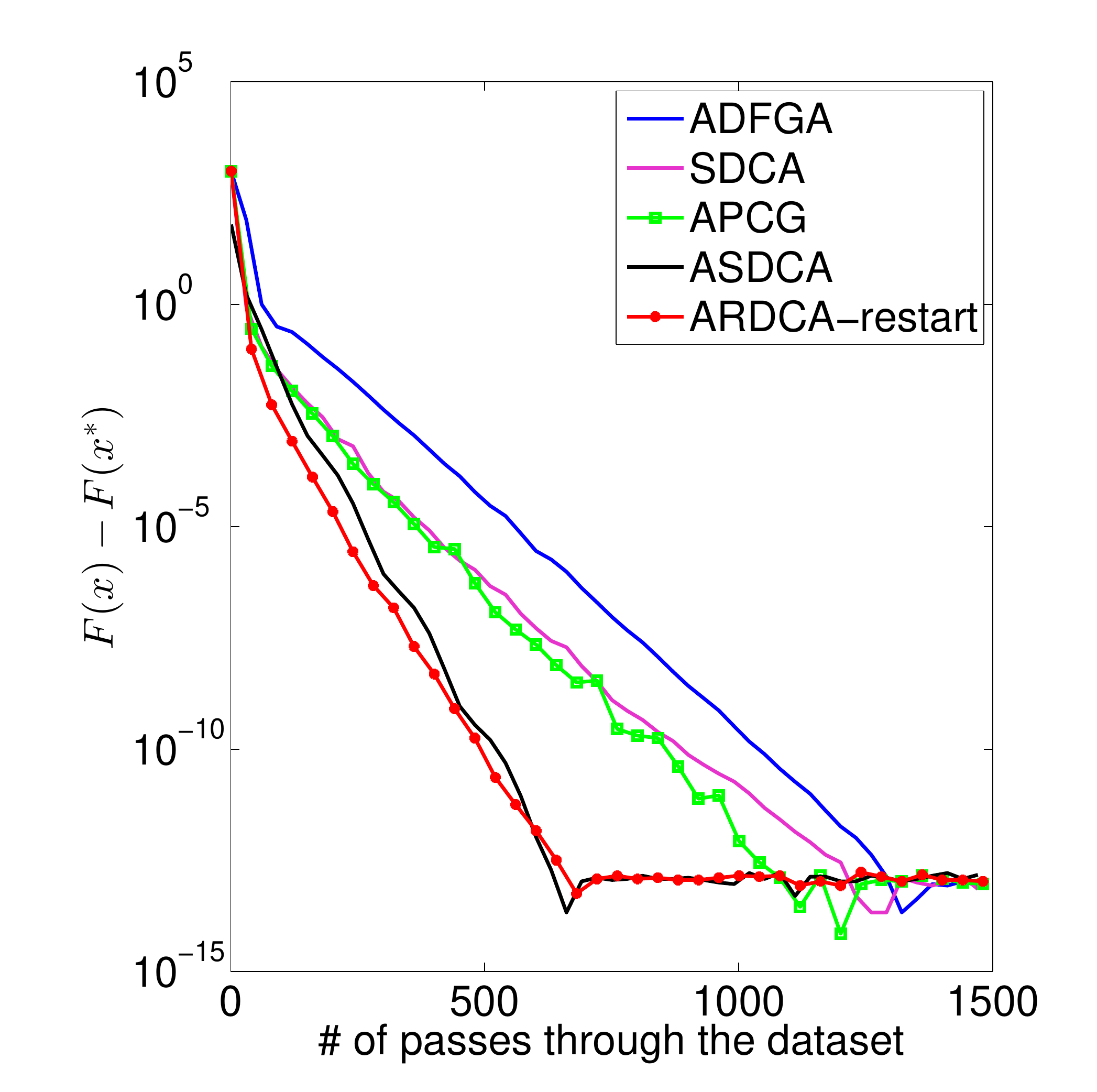}
&\hspace*{-0.2cm}\includegraphics[width=0.35\textwidth,keepaspectratio]{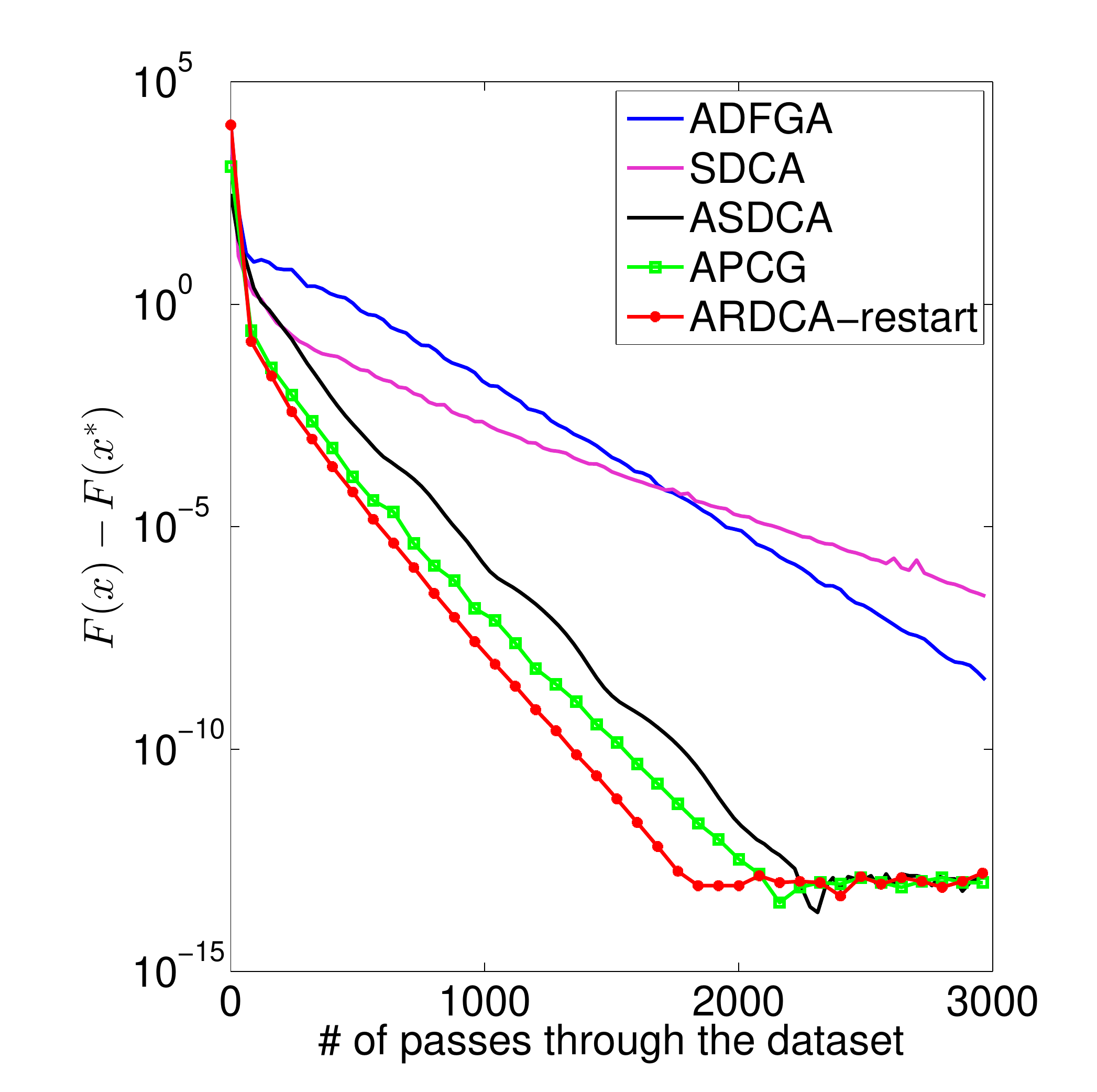}
&\hspace*{-0.2cm}\includegraphics[width=0.35\textwidth,keepaspectratio]{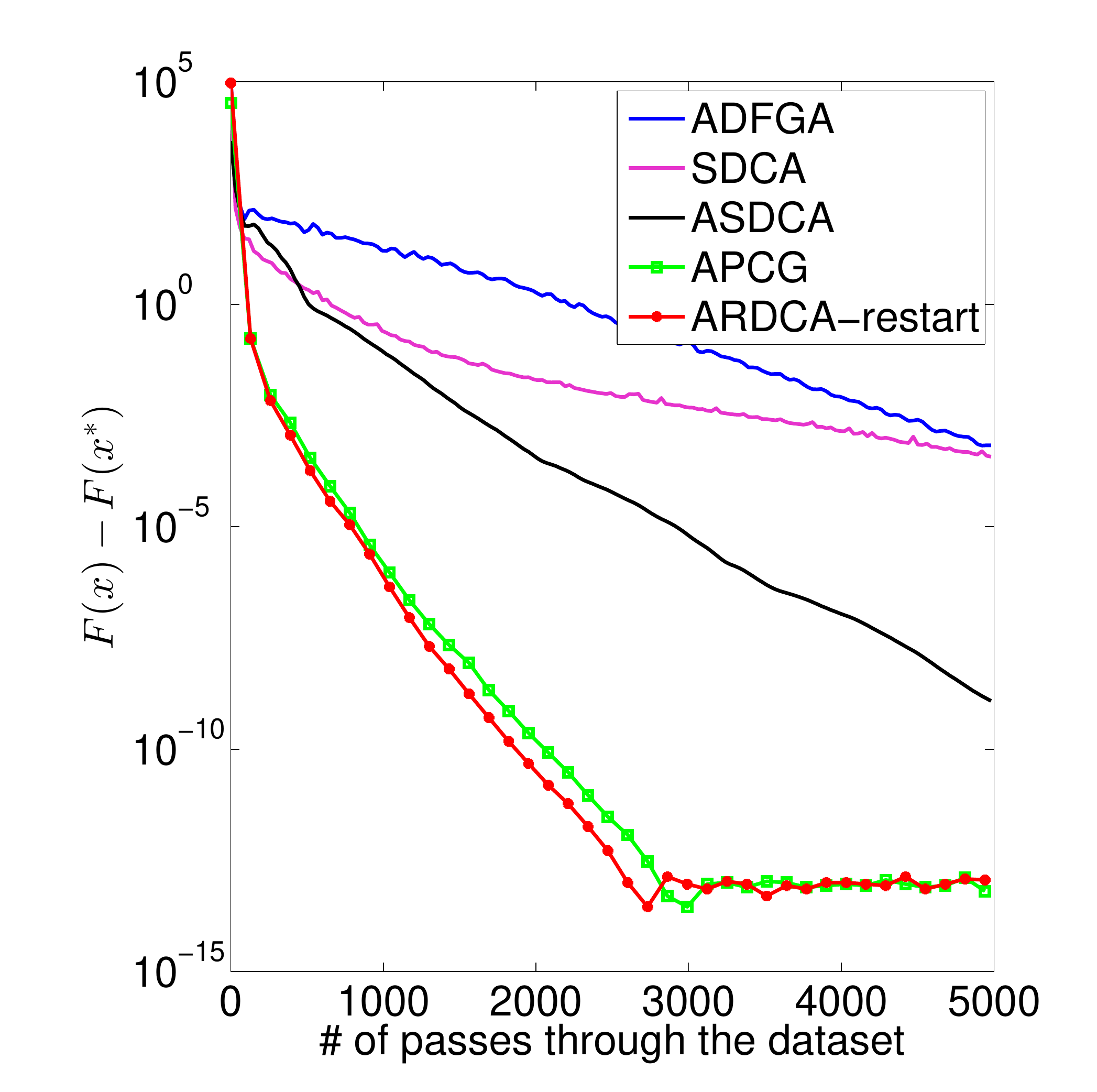}\\
$\lambda=10^{-3}$&$\lambda=10^{-4}$&$\lambda=10^{-5}$\\
\end{tabular}
\vspace*{-0.3cm}
\caption{Comparing ARDCA-restart with SDCA, APCG, ASDCA and ADFGA on problem (\ref{exp_problem1}).}\label{fig1}
\end{figure}

\begin{figure}
\centering
\begin{tabular}{@{\extracolsep{0.001em}}c@{\extracolsep{0.001em}}c@{\extracolsep{0.001em}}c@{\extracolsep{0.001em}}c}
\includegraphics[width=0.35\textwidth,keepaspectratio]{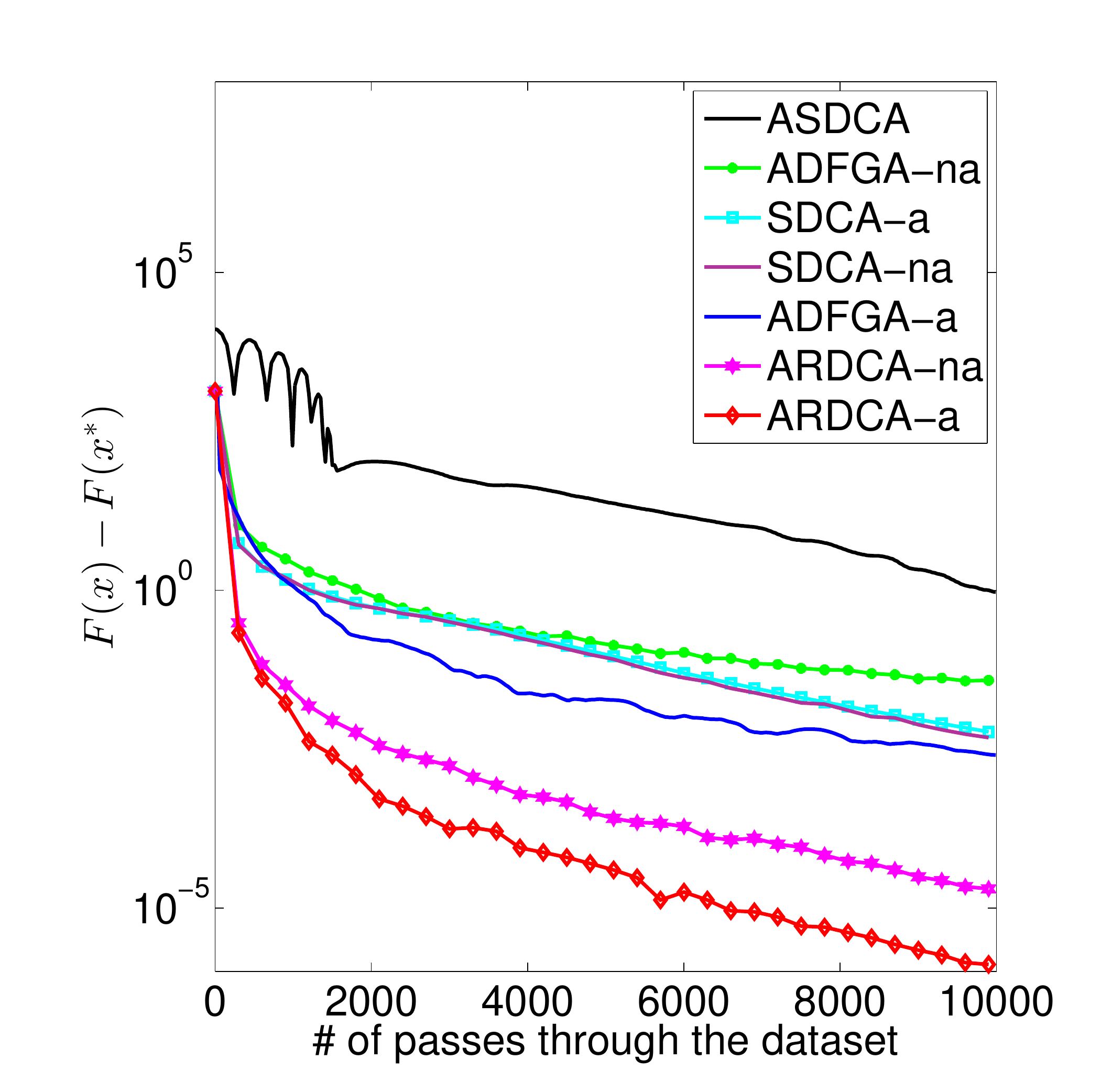}
&\hspace*{-0.2cm}\includegraphics[width=0.35\textwidth,keepaspectratio]{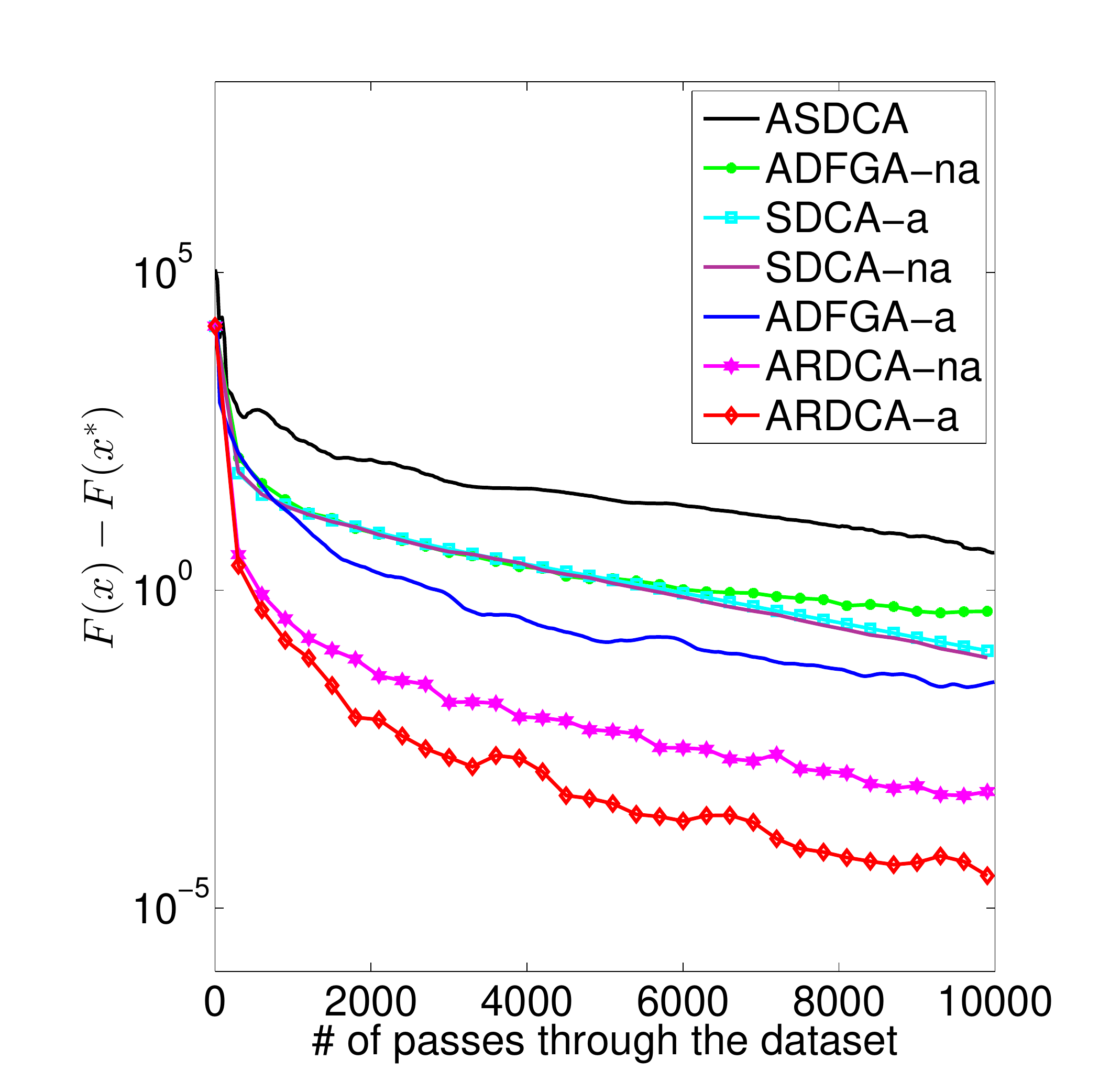}
&\hspace*{-0.2cm}\includegraphics[width=0.35\textwidth,keepaspectratio]{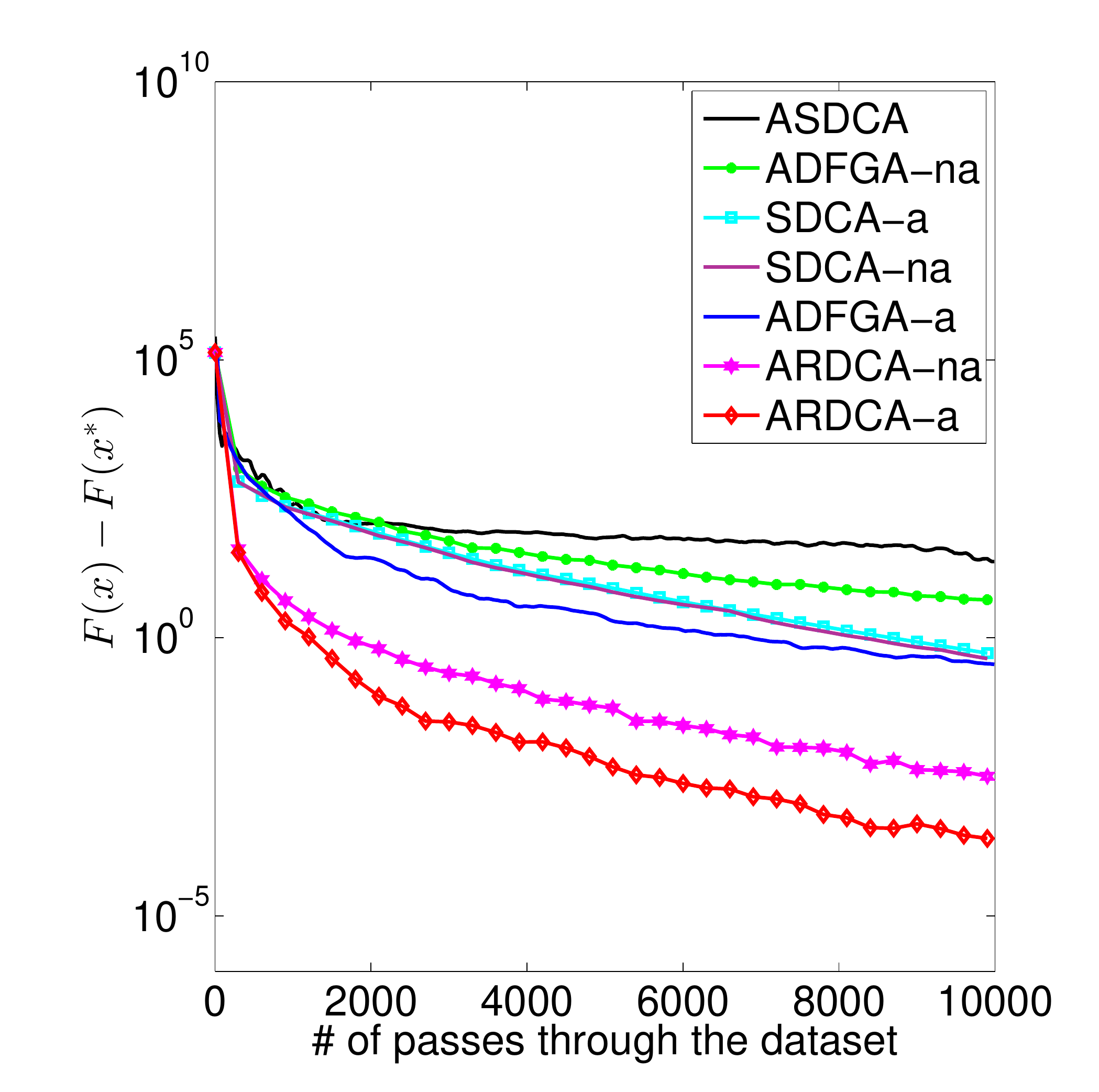}\\
$\lambda=10^{-3}$&$\lambda=10^{-4}$&$\lambda=10^{-5}$\\
\end{tabular}
\vspace*{-0.3cm}
\caption{Comparing ARDCA with SDCA, ASDCA and ADFGA on problem (\ref{exp_problem2}).}\label{fig2}
\end{figure}

\begin{figure}
\centering
\begin{tabular}{@{\extracolsep{0.001em}}c@{\extracolsep{0.001em}}c@{\extracolsep{0.001em}}c@{\extracolsep{0.001em}}c}
\includegraphics[width=0.35\textwidth,keepaspectratio]{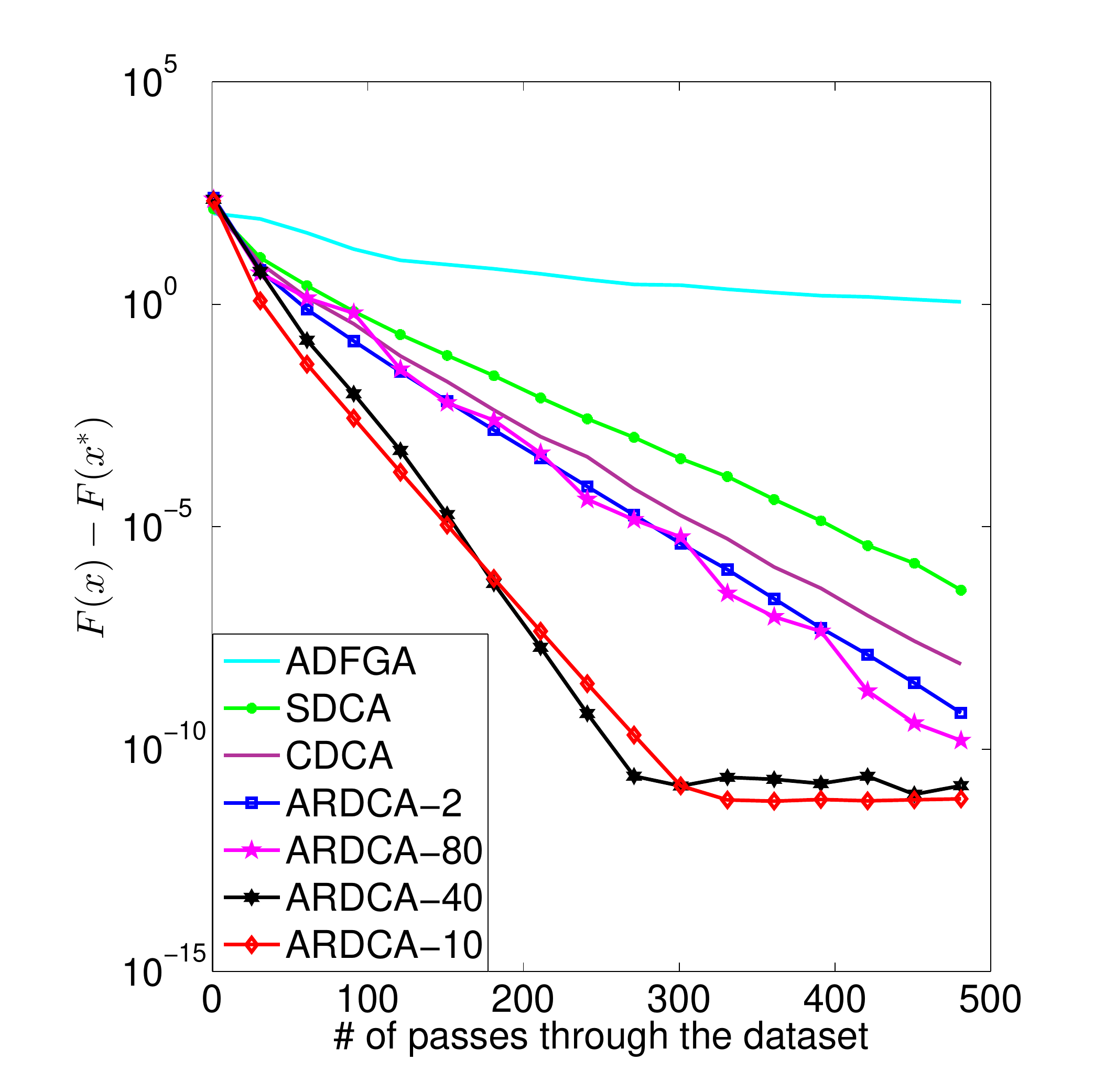}
&\hspace*{-0.2cm}\includegraphics[width=0.35\textwidth,keepaspectratio]{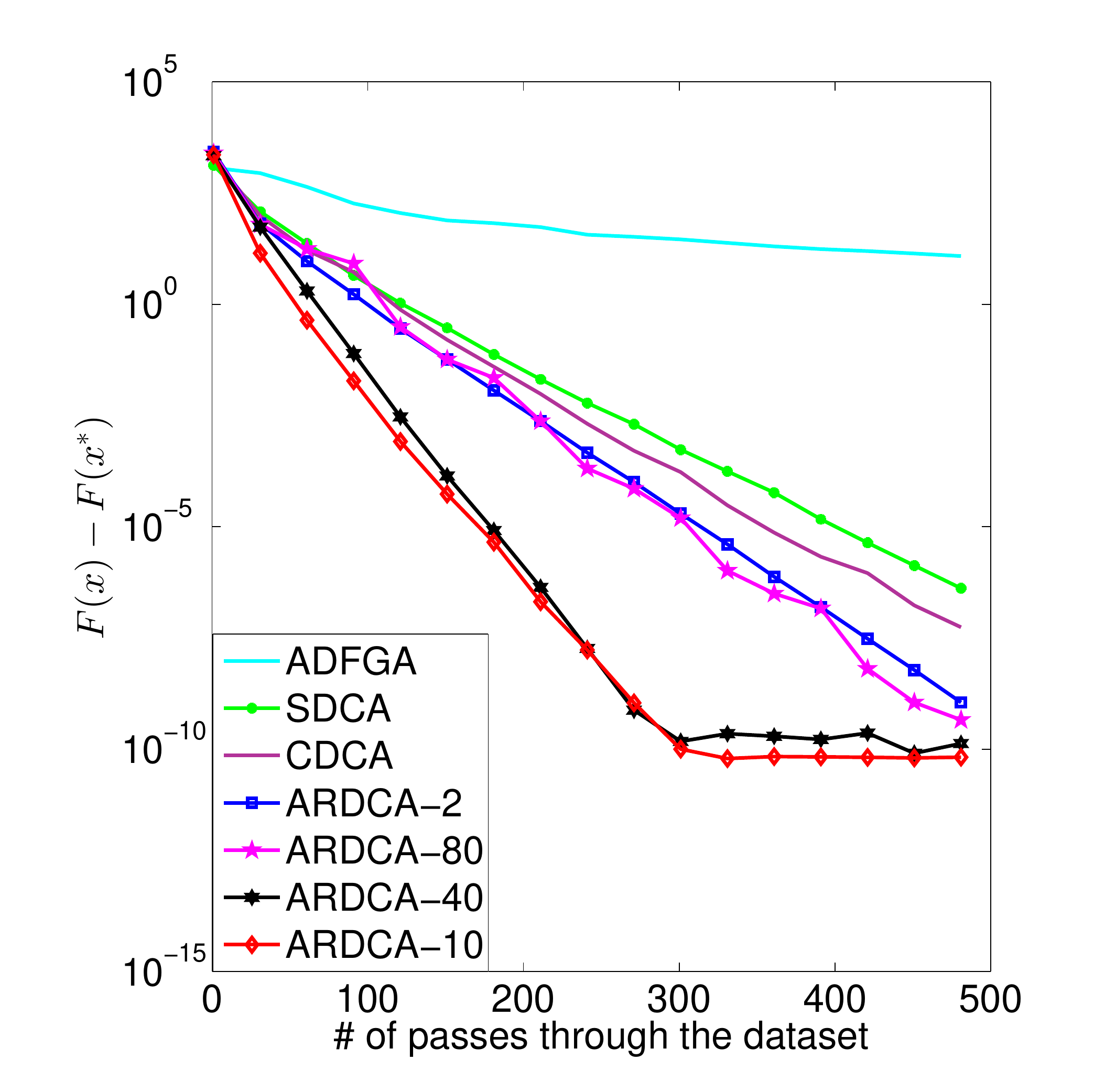}
&\hspace*{-0.2cm}\includegraphics[width=0.35\textwidth,keepaspectratio]{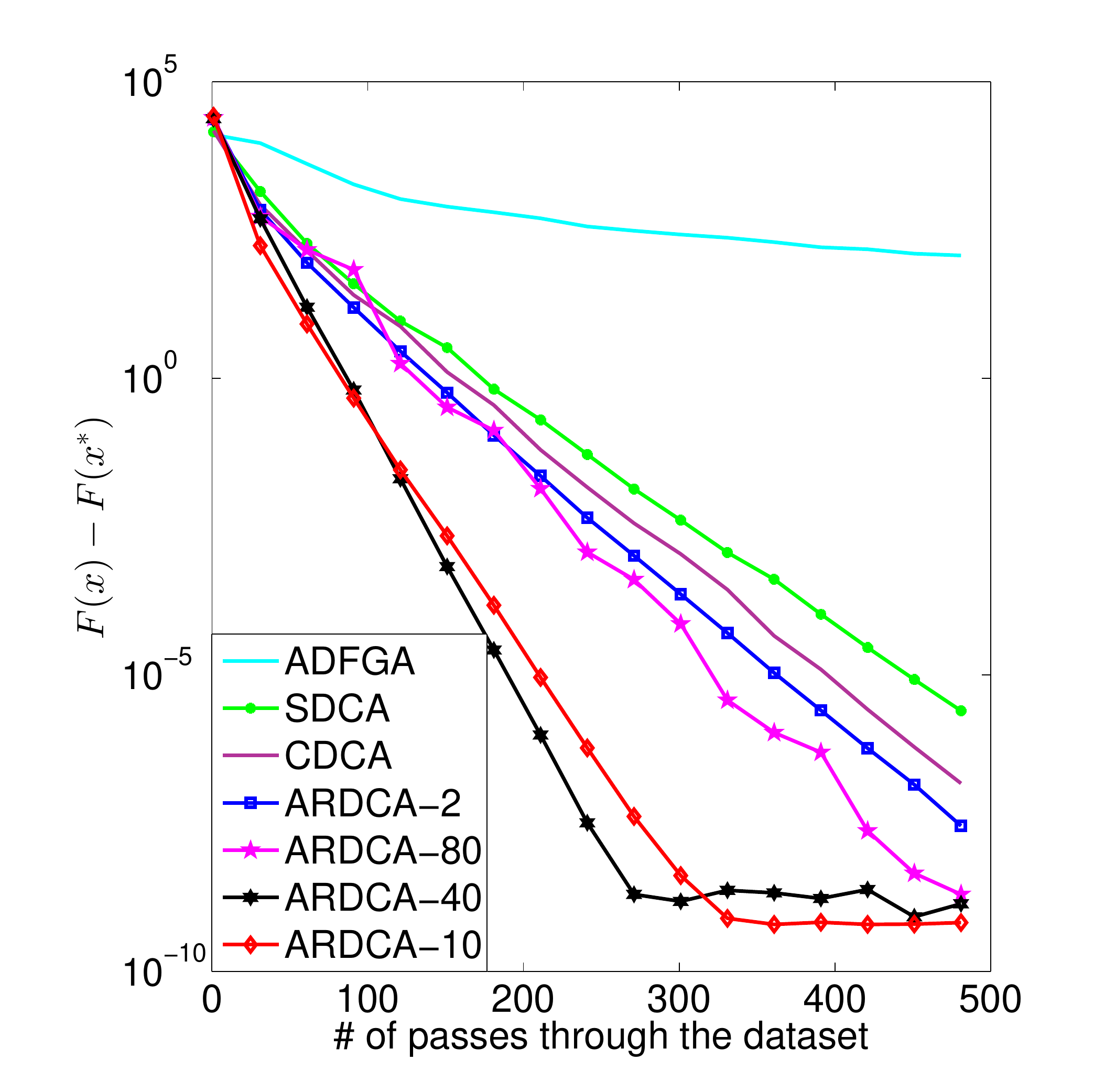}\\
$\lambda=10^{-3}$&$\lambda=10^{-4}$&$\lambda=10^{-5}$\\
\end{tabular}
\vspace*{-0.3cm}
\caption{Comparing ARDCA-restart with SDCA, CDCA and ADFGA on problem (\ref{exp_problem2}) with smooth $f(\x)$.}\label{fig4}
\end{figure}

\begin{figure}
\centering
\begin{tabular}{@{\extracolsep{0.001em}}c@{\extracolsep{0.001em}}c@{\extracolsep{0.001em}}c@{\extracolsep{0.001em}}c}
\includegraphics[width=0.35\textwidth,keepaspectratio]{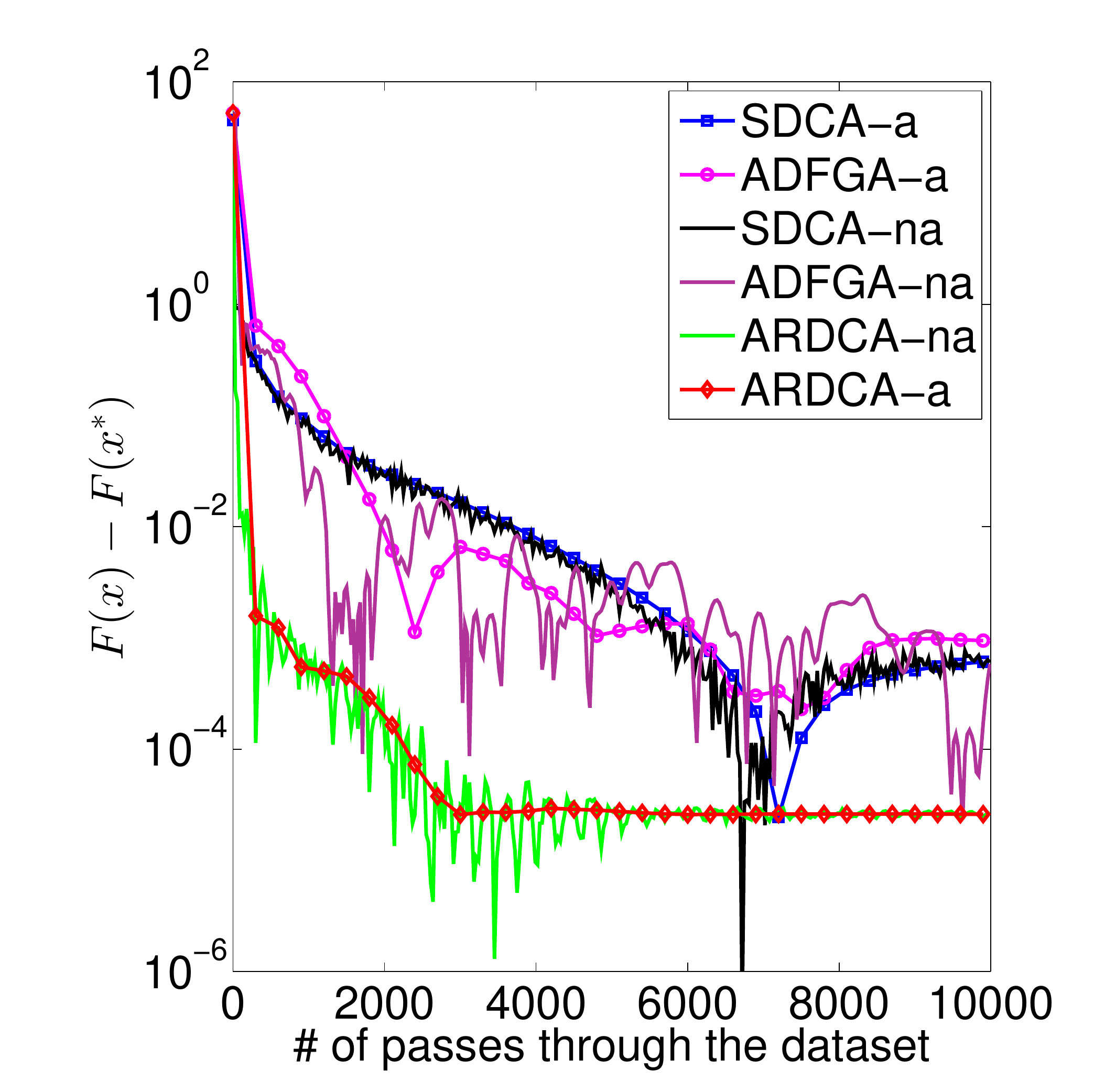}
&\hspace*{-0.2cm}\includegraphics[width=0.35\textwidth,keepaspectratio]{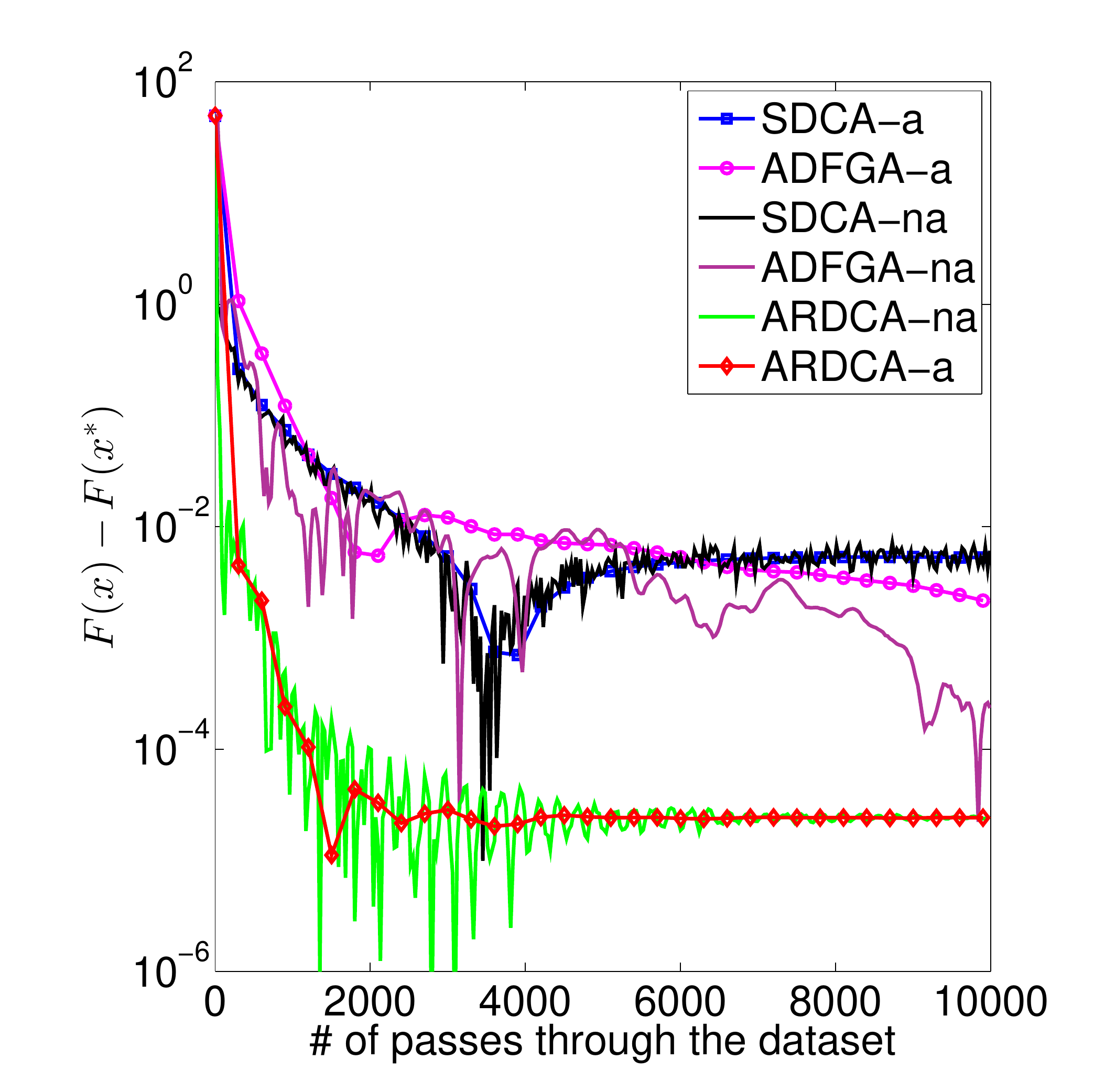}
&\hspace*{-0.2cm}\includegraphics[width=0.35\textwidth,keepaspectratio]{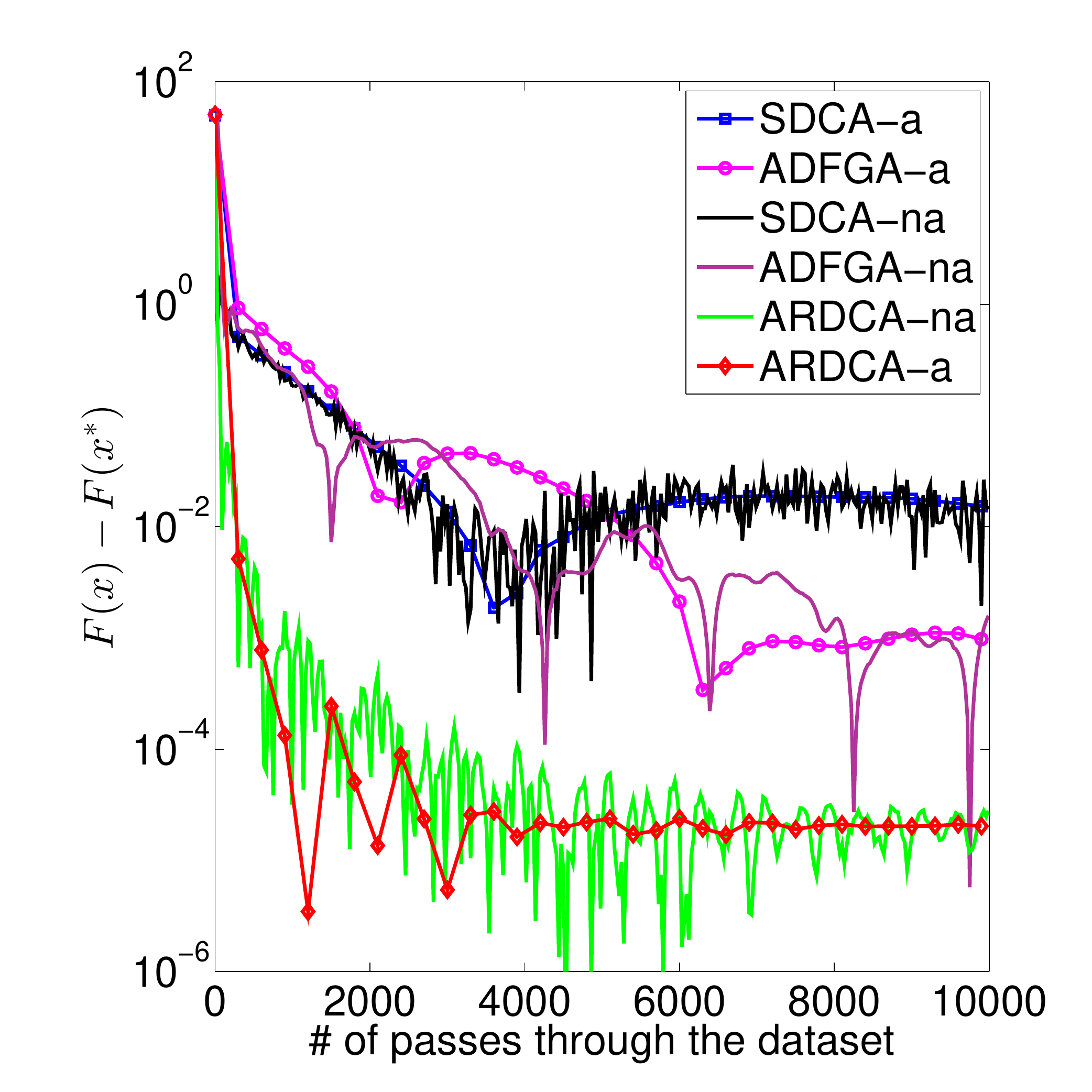}\\
\hspace*{0.4cm}\includegraphics[width=0.32\textwidth,keepaspectratio]{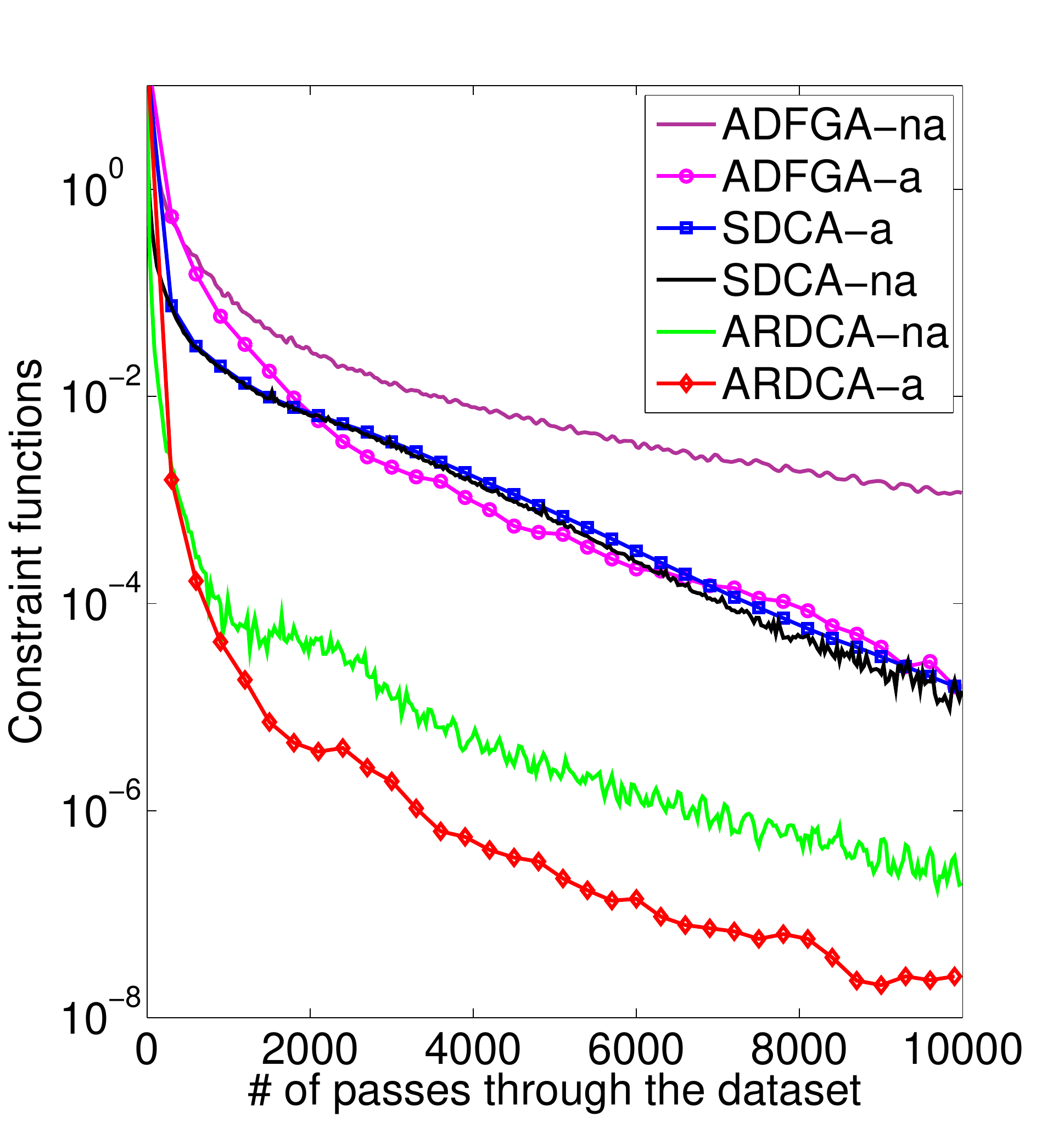}
&\hspace*{0.1cm}\includegraphics[width=0.32\textwidth,keepaspectratio]{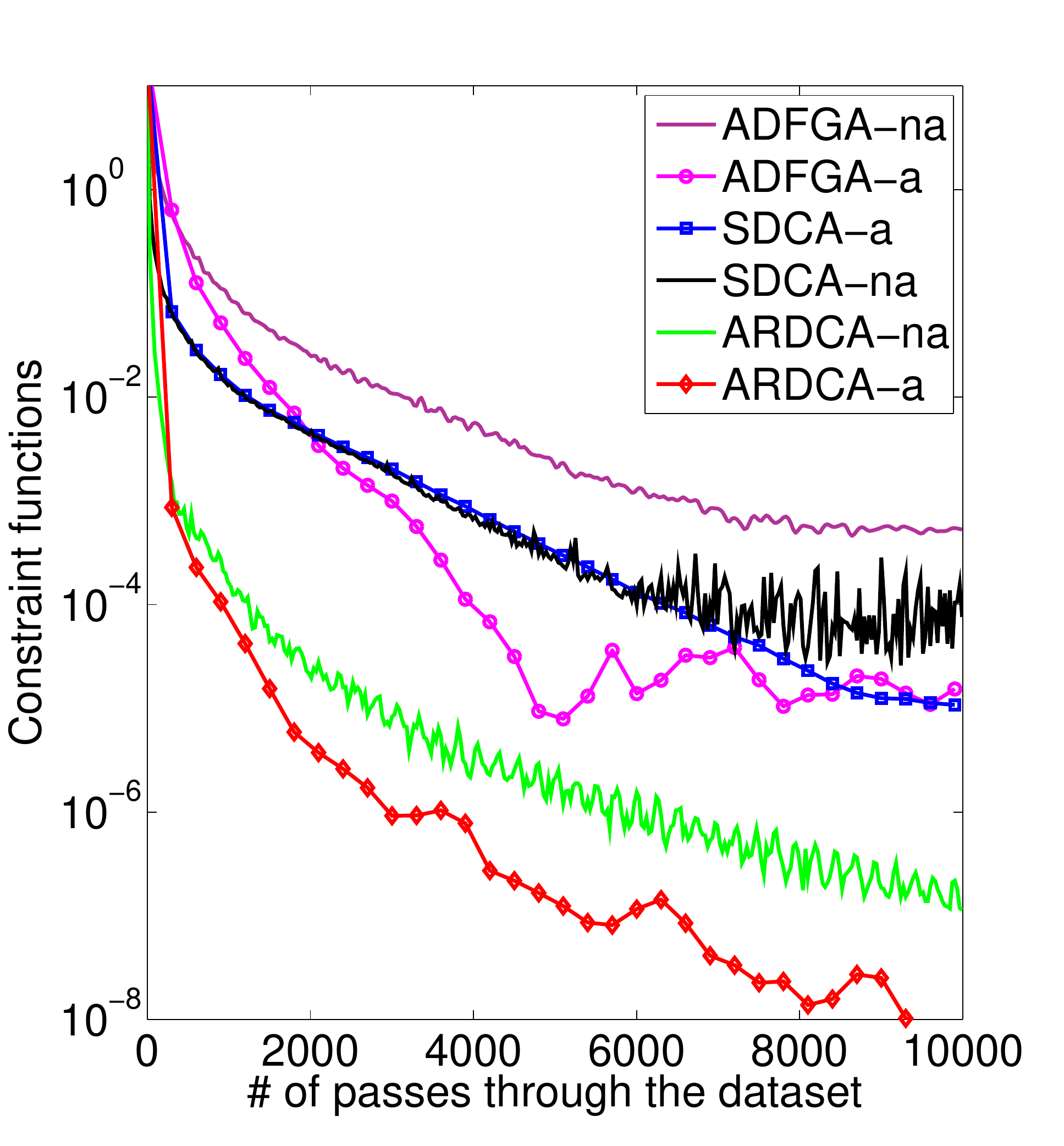}
&\hspace*{0.1cm}\includegraphics[width=0.32\textwidth,keepaspectratio]{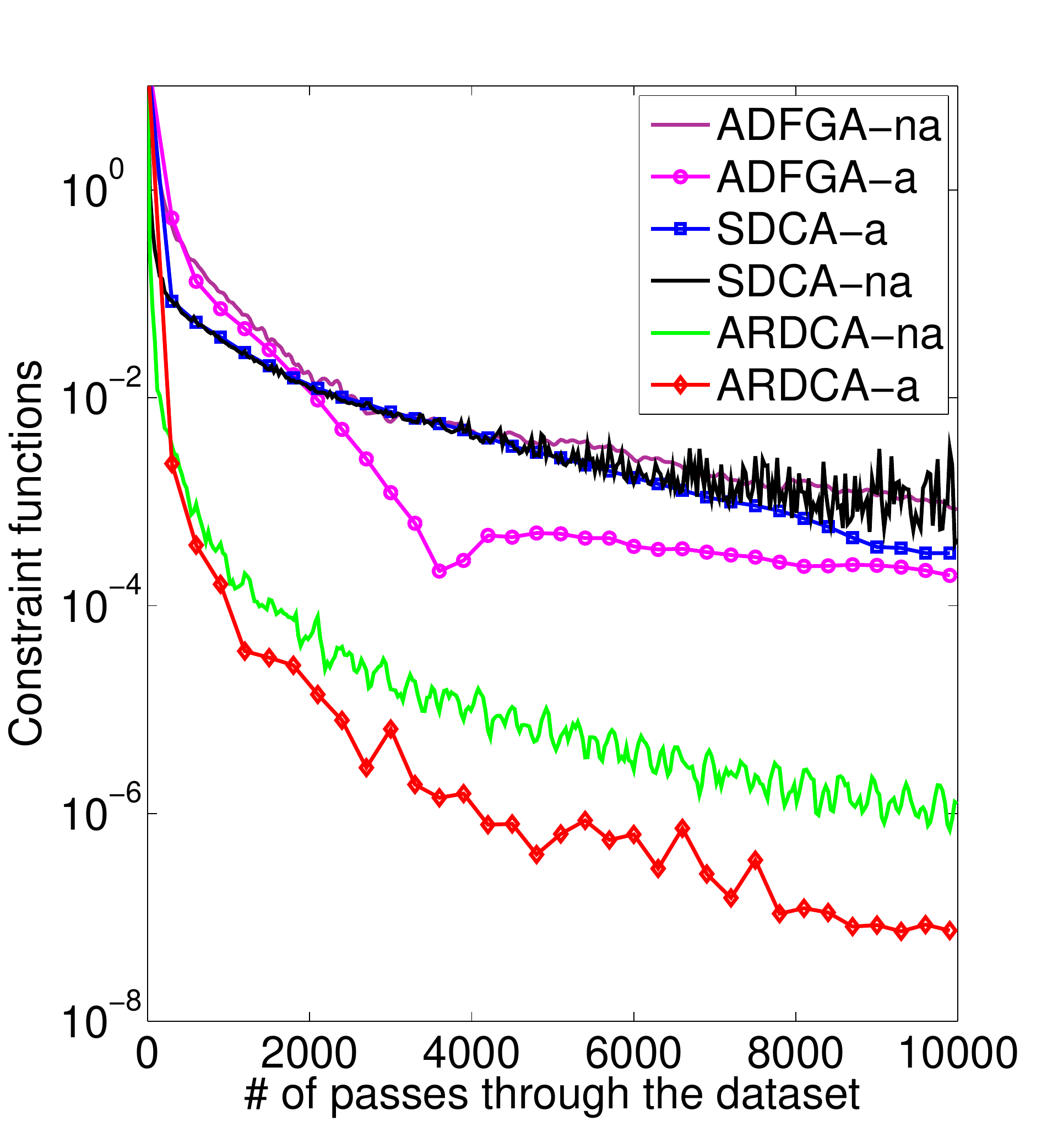}\\
$\tau=10^{-3}$&$\tau=10^{-4}$&$\tau=10^{-5}$\\
\end{tabular}
\vspace*{-0.3cm}
\caption{Comparing ARDCA with SDCA and ADFGA on problem (\ref{exp_problem3}). Top: objective function. Bottom: constraint functions.}\label{fig3}
\end{figure}

At last, we consider problem (\ref{exp_problem2}) with $f(\x)=\frac{\mu}{2}\|\x\|^2$ to verify the conclusions in Section \ref{sec_erm3}. In this scenario, we generate $\x$ to be a dense vector. We compare ARDCA with restart (Algorithm \ref{alg_arpcg_linear}) with SDCA \cite{zhang-2013-jmlr}, the Cyclic Dual Coordinate Ascent (CDCA) \cite{lin2014} and ADFGA \cite{Beck-2014}. Since the quadratic functional growth parameter $\kappa$ is unknown, we test Algorithm \ref{alg_arpcg_linear} with different inner iteration number $K_t\in\{2n,10n,40n,80n\}$. From Figure \ref{fig4} we can see that ARDCA, SDCA and CDCA all converge linearly and ARDCA with suitable $K$ performs the best. This verifies our theories in Section \ref{sec_erm3}.

\section{Conclusion}

In this paper, we prove that the iteration complexities of the primal solutions and dual solutions have the same order of magnitude for the accelerated randomized dual coordinate ascent. Specifically, when $f(\x)$ is $\mu$-strongly convex and the objectives are nonsmooth, we establish the $O\left(\frac{1}{\sqrt{\epsilon}}\right)$ iteration complexity. When the dual function further satisfies the quadratic functional growth condition, we prove the linear iteration complexity even if the condition number is unknown. When applied to the regularized empirical risk minimization problem, we prove the iteration complexity of $O\left(n\log n+\sqrt{\frac{n}{\epsilon}}\right)$, which outperforms the existing results by a $\left(\log\frac{1}{\epsilon}\right)$ factor. We also prove the accelerated linear convergence rate for some special problems with nonsmooth loss, e.g., the least absolute deviation and SVM. All the above results are established for both the primal solutions and dual solutions. The topic on the complexity analysis of the primal solutions is significant not only in stochastic optimization but also in distributed optimization. We hope that the analysis in this paper could facilitate more studies on this topic.

\section*{Appendix A: Efficient Computation of the Average}

We discuss the efficient computation of $\hat\x^K=\frac{\sum_{k=K_0}^K\frac{\x^*(\vv^k)}{\theta_k}}{\sum_{k=K_0}^K\frac{1}{\theta_k}}$. We define two variables $\mbox{sum}(\x,k)$ and $\mbox{sum}(\theta,k)$, update $\mbox{sum}(\x,k)=\mbox{sum}(\x,k-1)+\frac{\x^*(\vv^k)}{\theta_k}$ and $\mbox{sum}(\theta,k)=\mbox{sum}(\theta,k-1)+\frac{1}{\theta_k}$ at each iteration of Algorithm \ref{alg_arpcg}. We only store $\mbox{sum}(\x,k)$ and $\mbox{sum}(\theta,k)$ when $k=1,\lceil \nu(1+1/\hat n) \rceil,\lceil \nu(1+1/\hat n) \rceil^2,\lceil \nu(1+1/\hat n) \rceil^3,\cdots$. When the algorithm terminate at the $K$-th iteration, we let $K_0=\lceil \nu(1+1/\hat n) \rceil^p$ such that $\lceil \nu(1+1/\hat n) \rceil^{p+1}\leq K <\lceil \nu(1+1/\hat n) \rceil^{p+2}$ and compute $\hat\x^K=\frac{\mbox{sum}(\x,K)-\mbox{sum}(\x,K_0)}{\mbox{sum}(\theta,K)-\mbox{sum}(\theta,K_0)}$. Thus, we only need $O(t)$ computation time at each iteration and $O(t\log K)$ storage space in total, where $t$ is the dimension of $\x$.

\section*{Appendix B: Proof of Lemma \ref{coordinate_LS}}
\begin{proof}
Let $\hat\A=[\A/n,\B^T]$. From the proof of Theorem 3.1 in \cite{Lu-2016-siam}, we have
\begin{eqnarray}
\|\x^*(\uu)-\x^*(\vv)\|\leq \frac{1}{\mu}\|\hat\A\uu_{1:n+p}-\hat\A\vv_{1:n+p}\|+ \frac{1}{\mu}\sum_{i=1}^mL_{g_i}|\uu_{n+p+i}-\vv_{n+p+i}|.\notag
\end{eqnarray}
If $j\leq n+p$, then we have
\begin{eqnarray}
\begin{aligned}
&\|\x^*(\uu)-\x^*(\vv)\|\leq \frac{1}{\mu}\|\hat\A_j\uu_j-\hat\A_j\vv_j\|\leq\frac{\|\hat\A_j\|}{\mu}|\uu_j-\vv_j|=\frac{\|\hat\A_j\|}{\mu}\|\uu-\vv\|,\notag\\
&\|\nabla_j d(\uu)\hspace*{-0.05cm}-\hspace*{-0.05cm}\nabla_j d(\vv)\|^2\hspace*{-0.05cm}=\hspace*{-0.05cm}\|\hat\A_j^T\x^*(\uu)\hspace*{-0.05cm}-\hspace*{-0.05cm}\hat\A_j^T\x^*(\vv)\|^2\hspace*{-0.05cm}\leq\hspace*{-0.05cm}\|\hat\A_j\|^2\|\x^*(\uu)\hspace*{-0.05cm}-\hspace*{-0.05cm}\x^*(\vv)\|^2\hspace*{-0.05cm}\leq\hspace*{-0.05cm}\frac{\|\hat\A_j\|^4}{\mu^2}\|\uu\hspace*{-0.05cm}-\hspace*{-0.05cm}\vv\|^2.\notag
\end{aligned}
\end{eqnarray}
If $j>n+p$, then we have
\begin{eqnarray}
\begin{aligned}
&\|\x^*(\uu)-\x^*(\vv)\|\leq \frac{L_{g_{j-n-p}}}{\mu}|\uu_{j}-\vv_{j}|=\frac{L_{g_{j-n-p}}}{\mu}\|\uu-\vv\|,\notag\\
&\|\hspace*{-0.03cm}\nabla\hspace*{-0.03cm} d_j\hspace*{-0.03cm}(\hspace*{-0.03cm}\uu\hspace*{-0.03cm})\hspace*{-0.08cm}-\hspace*{-0.08cm}\nabla\hspace*{-0.03cm} d_j\hspace*{-0.03cm}(\hspace*{-0.03cm}\vv\hspace*{-0.03cm})\hspace*{-0.03cm}\|^2\hspace*{-0.08cm}=\hspace*{-0.08cm}|g_{j\hspace*{-0.03cm}-\hspace*{-0.03cm}n\hspace*{-0.03cm}-\hspace*{-0.03cm}p}(\hspace*{-0.03cm}\x^*\hspace*{-0.03cm}(\hspace*{-0.03cm}\uu\hspace*{-0.03cm})\hspace*{-0.03cm})\hspace*{-0.08cm}-\hspace*{-0.08cm}g_{j\hspace*{-0.03cm}-\hspace*{-0.03cm}n\hspace*{-0.03cm}-\hspace*{-0.03cm}p}(\hspace*{-0.03cm}\x^*\hspace*{-0.03cm}(\hspace*{-0.03cm}\vv\hspace*{-0.03cm})\hspace*{-0.03cm})\hspace*{-0.03cm}|^2\hspace*{-0.08cm}\leq\hspace*{-0.08cm} L_{g_{j\hspace*{-0.03cm}-\hspace*{-0.03cm}n\hspace*{-0.03cm}-\hspace*{-0.03cm}p}}^2\hspace*{-0.03cm}\|\hspace*{-0.03cm}\x^*\hspace*{-0.03cm}(\hspace*{-0.03cm}\uu\hspace*{-0.03cm})\hspace*{-0.08cm}-\hspace*{-0.08cm}\x^*\hspace*{-0.03cm}(\hspace*{-0.03cm}\vv\hspace*{-0.03cm})\hspace*{-0.03cm}\|^2\hspace*{-0.08cm}\leq\hspace*{-0.08cm}\frac{L_{g_{j\hspace*{-0.03cm}-\hspace*{-0.03cm}n\hspace*{-0.03cm}-\hspace*{-0.03cm}p}}^4}{\mu^2}\|\hspace*{-0.03cm}\uu\hspace*{-0.08cm}-\hspace*{-0.08cm}\vv\hspace*{-0.03cm}\|^2,\notag
\end{aligned}
\end{eqnarray}
which completes the proof.
\end{proof}

\section*{Appendix C: Proof of Lemma \ref{theta_lemma}}
\begin{proof}
From $\frac{1-\theta_{k}}{\theta_{k}^2}=\frac{1}{\theta_{k-1}^2}$, we can immediately prove the first two properties. We also have $\left(\frac{1}{\theta_k}-\frac{1}{2}-\frac{1}{2\hat n}\right)^2\leq\frac{1}{\theta_{k-1}^2}\leq\left(\frac{1}{\theta_k}-\frac{1}{2}\right)^2$, which leads to $\frac{k}{2}+\frac{k}{2\hat n}+\hat n\geq\frac{1}{\theta_k}\geq\frac{k}{2}+\hat n$. So we get $\frac{1}{\theta_K^2}-\frac{1}{\theta_{K_0-1}^2}\geq \left(\frac{K^2}{4}+\hat nK\right)\left(1-\frac{1}{\upsilon}\right)$ by letting $K_0\leq\left\lfloor\frac{K}{\upsilon(1+1/\hat n)}+1\right\rfloor$ for any $\upsilon>1$.
\end{proof}

\section*{Appendix D: Proof of Lemma \ref{lemma3}}
Lemma \ref{lemma3} can be proved by the techniques in \cite{fercoq-2015-siam} with only a little changes. We give the proof for the sake of completeness.
\begin{proof}
From the optimality condition of $\widetilde\z_i^k$, we have
\begin{eqnarray}
0\in 2\hat n\theta_k L_i(\widetilde\z_i^k-\z_{i}^k)+\nabla_{i}d(\vv^k)+\partial h_{i}(\widetilde\z_i^k),\quad\forall i=1,2,\cdots,\hat n.\notag
\end{eqnarray}
Thus, for any $\uu\in\DS$ and any $i=1,\cdots,n+p+m$, we have
\begin{eqnarray}\label{cont9}
h_{i}(\uu_{i})-h_{i}(\widetilde\z_i^k)\geq 2\hat n\theta_k L_i\<\widetilde\z_i^k-\z_{i}^k,\widetilde\z_i^k-\uu_{i}\>+\<\nabla_{i}d(\vv^k),\widetilde\z_i^k-\uu_{i}\>-\sigma_1(\uu_i,\widetilde\z_i^k),
\end{eqnarray}
where we use the convexity of $h_i(u)$ for $i>n$ and the definition of $\sigma_1(\uu_i,\widetilde\z_i^k)$ for $i\leq n$. Since $\uu^{k+1}\in \DS$, $\vv^k\in \DS$ and $\uu^{k+1}_j=\vv^k_j,\forall j\neq i_k$, then from (\ref{C_Lipschitz_smooth}), we have
\begin{eqnarray}
\begin{aligned}
&d(\uu^{k+1})\leq d(\vv^k)+\<\nabla_{i_k}d(\vv^k),\uu_{i_k}^{k+1}-\vv_{i_k}^k\>+\frac{L_{i_k}}{2}\|\uu_{i_k}^{k+1}-\vv_{i_k}^k\|^2.\notag
\end{aligned}
\end{eqnarray}
Using the relations of $\uu^{k+1}-\vv^k=\hat n\theta_k(\z^{k+1}-\z^k)$ and $\theta_k\z^k=\vv^k-(1-\theta_k)\uu^k$ in Algorithm \ref{alg_arpcg_equ}, we have
\begin{eqnarray}
\begin{aligned}
d(\uu^{k+1})\leq& d(\vv^k)+\<\nabla_{i_k}d(\vv^k),\hat n\theta_k(\widetilde\z_{i_k}^k-\z_{i_k}^k)\>+\frac{\hat n^2\theta_k^2L_{i_k}}{2}\|\widetilde\z_{i_k}^k-\z_{i_k}^k\|^2\notag\\
=& d(\vv^k)+\<\nabla_{i_k}d(\vv^k),\hat n\left[\theta_{k}\widetilde\z_{i_k}^k+(1-\theta_k)\uu_{i_k}^k-\vv_{i_k}^k\right]\>+\frac{\hat n^2\theta_k^2L_{i_k}}{2}\|\widetilde\z_{i_k}^k-\z_{i_k}^k\|^2\notag\\
=& d(\vv^k)+\<\nabla_{i_k}d(\vv^k),\hat n\left[\theta_{k}(\widetilde\z_{i_k}^k\hspace*{-0.05cm}-\hspace*{-0.05cm}\vv_{i_k}^k)\hspace*{-0.05cm}+\hspace*{-0.05cm}(1\hspace*{-0.05cm}-\hspace*{-0.05cm}\theta_k)(\uu_{i_k}^k\hspace*{-0.05cm}-\hspace*{-0.05cm}\vv_{i_k}^k)\right]\>\hspace*{-0.05cm}+\hspace*{-0.05cm}\frac{\hat n^2\theta_k^2L_{i_k}}{2}\|\widetilde\z_{i_k}^k\hspace*{-0.05cm}-\hspace*{-0.05cm}\z_{i_k}^k\|^2.\notag
\end{aligned}
\end{eqnarray}
Taking expectation with respect to $i_k$ conditioned on $\xi_{k-1}$, we have
\begin{eqnarray}
\begin{aligned}
&\E_{i_k|\xi_{k-1}}\left[d(\uu^{k+1})\right]\notag\\
\leq&\frac{1}{\hat n}\sum_{i=1}^{\hat n}\left[ d(\vv^k)+\hat n\<\nabla_{i}d(\vv^k),\theta_{k}(\widetilde\z_{i}^k-\vv_{i}^k)+(1-\theta_k)(\uu_{i}^k-\vv_{i}^k)\>+\frac{\hat n^2\theta_k^2L_{i}}{2}\|\widetilde\z_{i}^k-\z_{i}^k\|^2\right]\notag\\
=&d(\vv^k)+(1-\theta_k)\<\nabla d(\vv^k),\uu^k-\vv^k\>\hspace*{-0.07cm}+\hspace*{-0.12cm}\sum_{i=1}^{\hat n}\hspace*{-0.07cm}\left[\theta_{k}\<\nabla_{i}d(\vv^k),\widetilde\z_{i}^k-\vv_{i}^k\>+\frac{\hat n\theta_k^2L_{i}}{2}\|\widetilde\z_{i}^k-\z_{i}^k\|^2\right]\notag\\
\overset{a}\leq& (1-\theta_k)d(\uu^k)+\theta_k d(\vv^k)\notag\\
&+\sum_{i=1}^{\hat n}\left[\theta_{k}\<\nabla_{i}d(\vv^k),\uu_{i}-\vv_{i}^k\>+\theta_{k}\<\nabla_{i}d(\vv^k),\widetilde\z_{i}^k-\uu_{i}\>+\frac{\hat n\theta_k^2L_{i}}{2}\|\widetilde\z_{i}^k-\z_{i}^k\|^2\right]\notag\\
\overset{b}=& (1-\theta_k)d(\uu^k)+\theta_kd(\uu)+\theta_k \sigma_2(\uu,\vv^k)+\sum_{i=1}^{\hat n}\left[\theta_{k}\<\nabla_{i}d(\vv^k),\widetilde\z_{i}^k-\uu_{i}\>+\frac{\hat n\theta_k^2L_{i}}{2}\|\widetilde\z_{i}^k-\z_{i}^k\|^2\right]\notag\\
\overset{c}\leq&(1-\theta_k)d(\uu^k)+\theta_k d(\uu)+\theta_k \sigma_2(\uu,\vv^k)+\sum_{i=1}^{\hat n}\left[\theta_k\left( h_{i}(\uu_{i})-h_{i}(\widetilde\z_{i}^k)\right)+\theta_k\sigma_1(\uu_{i},\widetilde\z_{i}^k)\right.\notag\\
&\left.-2\hat n\theta_k^2 L_{i}\<\widetilde\z_{i}^k-\z_{i}^k,\widetilde\z_{i}^k-\uu_{i}\>+\frac{\hat n\theta_k^2L_{i}}{2}\|\widetilde\z_{i}^k-\z_{i}^k\|^2\right]\notag\\
\overset{d}=&(1-\theta_k)d(\uu^k)+\theta_k d(\uu)+\theta_k \sigma_2(\uu,\vv^k)+\sum_{i=1}^{\hat n}\left[\theta_k\left( h_{i}(\uu_{i})-h_{i}(\widetilde\z_{i}^k)\right)+\theta_k\sigma_1(\uu_{i},\widetilde\z_{i}^k)\right.\notag\\
&\left.+\hat n\theta_k^2 L_{i}\left[ \|\z_{i}^k-\uu_{i}\|^2-\|\widetilde\z_{i}^k-\uu_{i}\|^2 \right]-\frac{\hat n\theta_k^2 L_{i}}{2}\|\widetilde\z_{i}^k-\z_{i}^k\|^2\right]\notag\\
\overset{e}=&(1-\theta_k)d(\uu^k)+\theta_k  D(\uu)+\theta_k \sigma_2(\uu,\vv^k)+\theta_k\sum_{i=1}^{n}\sigma_1(\uu_{i},\widetilde\z_{i}^k)+(1-\theta_k)H^k\notag\\
&-\hspace*{-0.07cm}\E_{i_k|\xi_{k-1}}[H^{k+1}]\hspace*{-0.07cm}+\hspace*{-0.07cm}\hat n^2\theta_k^2\hspace*{-0.07cm}\left[ \|\z^k\hspace*{-0.07cm}-\hspace*{-0.07cm}\uu\|_L^2\hspace*{-0.07cm}-\hspace*{-0.07cm}\E_{i_k|\xi_{k-1}}[\|\z^{k+1}\hspace*{-0.07cm}-\hspace*{-0.07cm}\uu\|_L^2] \right]\hspace*{-0.07cm}-\hspace*{-0.07cm}\frac{\hat n^2\theta_k^2}{2}\E_{i_k|\xi_{k-1}}[\|\z^{k+1}\hspace*{-0.07cm}-\hspace*{-0.07cm}\z^k\|_L^2],\notag
\end{aligned}
\end{eqnarray}
where we use the convexity of $d$ in $\overset{a}\leq$, the definition of $\sigma_2(\uu,\vv^k)$ in $\overset{b}=$, (\ref{cont9}) in $\overset{c}\leq$, $2\<a-b,a-c\>=\|a-b\|^2+\|a-c\|^2-\|b-c\|^2$ in $\overset{d}=$ and the following three equations in $\overset{e}=$, which can be obtained from Lemma 4 and Equation (45) in \cite{fercoq-2015-siam},
\begin{eqnarray}
&&\E_{i_k|\xi_{k-1}}[\|\z^{k+1}-\uu\|_L^2]=\frac{1}{\hat n}\sum_{i=1}^{\hat n}\left[L_{i}\|\widetilde\z_{i}^k-\uu_{i}\|^2+\sum_{j\neq i}L_j\|\z_j^{k}-\uu_j\|^2 \right]\notag\\
&&\hspace*{3.62cm}=\frac{1}{\hat n}\sum_{i=1}^{\hat n}L_{i}\|\widetilde\z_{i}^k-\uu_{i}\|^2+\frac{\hat n-1}{\hat n}\sum_{j=1}^{\hat n}L_j\|\z_j^{k}-\uu_j\|^2,\notag\\
&&\E_{i_k|\xi_{k-1}}[H^{k+1}]=\sum_{t=0}^{k} \alpha_{k+1,t} h(\z^{t})+\E_{i_k|\xi_{k-1}}[\hat n\theta_k h(\z^{k+1})]\notag\\
&&\hspace*{2.46cm}=\sum_{t=0}^{k} \alpha_{k+1,t} h(\z^t)+\frac{1}{\hat n}\sum_{i=1}^{\hat n}\hat n\theta_k\left( h_{i}(\widetilde\z_{i}^k)+\sum_{j\neq i}h_j(\z_j^k)\right)\notag\\
&&\hspace*{2.46cm}=\sum_{t=0}^{k} \alpha_{k+1,t} h(\z^t)+\theta_k\sum_{i=1}^{\hat n} h_{i}(\widetilde\z_{i}^k)+(\hat n-1)\theta_k\sum_{i=1}^{\hat n}h_i(\z_i^k)\notag\\
&&\hspace*{2.46cm}=\sum_{t=0}^{k-1} \alpha_{k+1,t} h(\z^t)+\alpha_{k+1,k} h(\z^k)+(\hat n-1)\theta_k h(\z^k)+\theta_k\sum_{i=1}^{\hat n} h_{i}(\widetilde\z_{i}^k)\notag\\
&&\hspace*{2.46cm}=(1-\theta_k)\sum_{t=0}^{k-1} \alpha_{k,t} h(\z^t)+(1-\theta_k)\alpha_{k,k}h(\z^k)+\theta_k\sum_{i=1}^{\hat n} h_{i}(\widetilde\z_{i}^k)\notag\\
&&\hspace*{2.46cm}=(1-\theta_k)H^k+\theta_k\sum_{i=1}^{\hat n} h_{i}(\widetilde\z_{i}^k),\notag\\
&&\E_{i_k|\xi_{k-1}}[\|\z^{k+1}-\z^k\|_L^2]=\frac{1}{\hat n}\sum_{i=1}^{\hat n}\left( L_{i}\|\widetilde\z_{i}^k-\z_{i}^k\|^2+\sum_{j\neq i}L_j\|\z_j^k-\z_j^k\|^2 \right)\notag\\
&&\hspace*{3.82cm}=\frac{1}{\hat n}\sum_{i=1}^{\hat n}L_{i}\|\widetilde\z_{i}^k-\z_{i}^k\|^2.\label{cont61}
\end{eqnarray}
Rearranging the terms, we have
\begin{eqnarray}
\begin{aligned}\notag
&\E_{i_k|\xi_{k-1}}\left[d(\uu^{k+1})+H^{k+1}- D(\uu)+\hat n^2\theta_k^2\|\z^{k+1}-\uu\|_L^2+\frac{\hat n^2\theta_k^2}{2}\|\z^{k+1}-\z^k\|_L^2\right]\\
\leq&(1-\theta_k)\left[d(\uu^k)+H^k- D(\uu)\right]+\hat n^2\theta_k^2\|\z^k-\uu\|_L^2+\theta_k\left(\sum_{i=1}^n\sigma_1(\uu_i,\widetilde\z_i^k)+\sigma_2(\uu,\vv^k)\right).
\end{aligned}
\end{eqnarray}
Taking expectation with respect to $\xi_{k-1}$ on both sides, we have
\begin{eqnarray}
\begin{aligned}
&\E_{\xi_k}\left[d(\uu^{k+1})+H^{k+1}- D(\uu)+\hat n^2\theta_k^2\|\z^{k+1}-\uu\|_L^2+\frac{\hat n^2\theta_k^2}{2}\|\z^{k+1}-\z^k\|_L^2\right]\notag\\
\leq&(1\hspace*{-0.09cm}-\hspace*{-0.09cm}\theta_k)\E_{\xi_{k-1}}\hspace*{-0.11cm}\left[d(\uu^k)\hspace*{-0.09cm}+\hspace*{-0.09cm}H^k\hspace*{-0.09cm}-\hspace*{-0.09cm} D(\uu)\right]\hspace*{-0.09cm}+\hspace*{-0.09cm}\hat n^2\theta_k^2\E_{\xi_{k-1}}\hspace*{-0.09cm}[\|\z^k-\uu\|_L^2]\hspace*{-0.09cm}+\hspace*{-0.09cm}\theta_k\E_{\xi_{k-1}}\hspace*{-0.11cm}\left[\sum_{i=1}^n\hspace*{-0.09cm}\sigma_1(\uu_i,\widetilde\z_i^k)\hspace*{-0.09cm}+\hspace*{-0.09cm}\sigma_2(\uu,\vv^k)\hspace*{-0.09cm}\right]\hspace*{-0.09cm}.\notag
\end{aligned}
\end{eqnarray}
Since $\{\uu^{k+1}$, $\z^{k+1}$, $H^{k+1}\}$ are independent on $\{i_{k+1},\cdots,i_K\}$ and $\{\uu^k$, $\z^k$, $\widetilde\z^k$, $H^k$, $\sigma_1(\uu,\widetilde\z^k)$, $\sigma_2(\uu,\vv^k)\}$ are independent on $\{i_k,\cdots,i_K\}$, we have (\ref{cont4}).
\end{proof}

\section*{Appendix E: Proof of Lemma \ref{erm_lemma}}.
\begin{proof}
We first prove (\ref{cont57}). From (\ref{cont4}) and $\|\z^k-\uu^*\|_L^2\leq \frac{4M^2}{n\mu}$, we have
\begin{eqnarray}
\begin{aligned}
&\E_{\xi_{K'}}[D(\uu^{{K'+1}})]-D(\uu^*)\leq \E_{\xi_{K'}}[d(\uu^{{K'+1}})+H^{{K'+1}}]-D(\uu^*)\notag\\
\leq&\left(1-\frac{1}{n}\right)\left(\E_{\xi_{K'}}[d(\uu^{K'})+H^{K'}]-D(\uu^*)\right)+\frac{1}{n}\E_{\xi_{K'}}\left[\sum_{i=1}^n\sigma_1(\uu_i^*,\widetilde\z_i^{K'})+\sigma_2(\uu^*,\vv^{K'})\right]\notag\\
&+\E_{\xi_{K'}}[\|\z^{K'}-\uu^*\|_L^2]-\E_{\xi_{K'}}[\|\z^{K'+1}-\uu^*\|_L^2]\notag\\
\leq&\left(1-\frac{1}{n}\right)^{K'+1}\left(D(\uu^0)-D(\uu^*)\right)+\frac{1}{n}\E_{\xi_{K'}}\left[\sum_{i=1}^n\sigma_1(\uu_i^*,\widetilde\z_i^{K'})+\sigma_2(\uu^*,\vv^{K'})\right]\notag\\
&+\sum_{k=1}^{K'}\left(1-\frac{1}{n}\right)^{K'-k}\frac{1}{n}\E_{\xi_{K'}}[\|\z^{k}-\uu^*\|_L^2]+\left(1-\frac{1}{n}\right)^{K'}\|\z^{0}-\uu^*\|_L^2\notag\\
\leq&\exp\left(-\frac{K'+1}{n}\right)\left(D(\uu^0)-D(\uu^*)\right)+\frac{1}{n}\E_{\xi_{K'}}\left[\sum_{i=1}^n\sigma_1(\uu_i^*,\widetilde\z_i^{K'})+\sigma_2(\uu^*,\vv^{K'})\right]+\frac{8M^2}{n\mu}\notag\\
\leq&9\max\left\{\epsilon,\frac{M^2}{n\mu}\right\}+\frac{1}{n}\E_{\xi_{K'}}\left[\sum_{i=1}^n\sigma_1(\uu_i^*,\widetilde\z_i^{K'})+\sigma_2(\uu^*,\vv^{K'})\right]\leq 9\max\left\{\epsilon,\frac{M^2}{n\mu}\right\},\notag
\end{aligned}
\end{eqnarray}
which leads to (\ref{cont57}) and
\begin{eqnarray}
\begin{aligned}
-\frac{1}{n}\E_{\xi_{K'}}\left[\sum_{i=1}^n\sigma_1(\uu_i^*,\widetilde\z_i^{K'})+\sigma_2(\uu^*,\vv^{K'})\right]\leq 9\max\left\{\epsilon,\frac{M^2}{n\mu}\right\}.\label{cont59}
\end{aligned}
\end{eqnarray}
Then we prove (\ref{cont58}). From (\ref{pd_relation1}) and (\ref{cont59}), we have
\begin{eqnarray}
\begin{aligned}\notag
&\E_{\xi_{K'}}[f(\x^*(\vv^{K'})]+\frac{1}{n}\E_{\xi_{K'}}[\phi(n\y^{K'})]+D(\uu^*)-\|\uu^*\|_L\E_{\xi_{K'}}[\|\A^T\x^*(\vv^{K'})/n-\y^{K'}\|_L^*]\\
\leq&-\frac{1}{n}\E_{\xi_{K'}}\left[\sum_{i=1}^n\sigma_1(\uu_i^*,\widetilde\z_i^{K'})+\sigma_2(\uu^*,\vv^{K'})\right]\leq 9\max\left\{\epsilon,\frac{M^2}{n\mu}\right\}.
\end{aligned}
\end{eqnarray}
From (\ref{cont41}) with $\theta_k=\frac{1}{\hat n}$ and $\|\widetilde\z^{K'}-\z^{K'}\|_L^2\leq \frac{4M^2}{n\mu}$, we have
\begin{eqnarray}
\begin{aligned}
(\|\A^T\x^*(\vv^{K'})/n-\y^{K'}\|_L^*)^2=4\|\widetilde\z^{K'}-\z^{K'}\|_L^2\leq 16\max\left\{\epsilon,\frac{M^2}{n\mu}\right\}.\notag
\end{aligned}
\end{eqnarray}
So from a similar induction to (\ref{cont7}), we have
\begin{eqnarray}
\begin{aligned}\notag
&\E_{\xi_{K'}}[f(\x^*(\vv^{K'})]+\frac{1}{n}\E_{\xi_{K'}}[\phi(\A^T\x^*(\vv^{K'}))]-f(\x^*)-\frac{1}{n}\phi(\A^T\x^*)\\
\leq&9\max\left\{\epsilon,\frac{M^2}{n\mu}\right\}+\left(\|\uu^*\|_L+M\sqrt{\sum_{i=1}^nL_i}\right)\E_{\xi_{K'}}[\|\A^T\x^*(\vv^{K'})/n-\y^{K'}\|_L^*]\\
\leq& 17\max\left\{\epsilon,\frac{M^2}{n\mu}\right\},
\end{aligned}
\end{eqnarray}
where we use $\|\uu^*\|_L^2\leq\frac{M^2}{n\mu}$ and $M\sqrt{\sum_{i=1}^nL_i}\leq\frac{M}{\sqrt{n\mu}}$.
\end{proof}

\section*{Appendix F: Analysis for the Complexity Comparisons in Section \ref{linar_sec}}
The complexity of Algorithm \ref{alg_arpcg_linear} is
\begin{equation}
O\left(K\frac{\log\frac{1}{\epsilon}}{\log \frac{1+\frac{\kappa}{2}\left(\frac{K}{2\hat n}+1\right)^2}{1+(1-\theta_0)\kappa} }\right).\label{cont60}
\end{equation}
Case 1: $\kappa<1$.

Letting $\frac{1+\frac{\kappa}{2}\left(\frac{K}{2\hat n}+1\right)^2}{1+(1-\theta_0)\kappa}$ be a constant, e.g., 2, we have $K=2\hat n\left(\sqrt{\frac{2}{\kappa}+4(1-\theta_0)}-1\right)$ and $2\hat n\left( \sqrt{\frac{1}{\kappa}}+\sqrt{2(1-\theta_0)}-1 \right)\leq K\leq 2\hat n\left( \sqrt{\frac{2}{\kappa}}+2\sqrt{1-\theta_0}-1 \right)$. So $K=O\left(\hat n+\frac{\hat n}{\sqrt{\kappa}}\right)$ and (\ref{cont60}) has the same order of magnitude as $\left(\hat n+\frac{\hat n}{\sqrt{\kappa}}\right)\log\frac{1}{\epsilon}$.

When $\hat n<K<\hat n+\frac{\hat n}{\sqrt{\kappa}}$, (\ref{cont60}) has the same order of magnitude as $K\frac{\log\frac{1}{\epsilon}}{\log \left(1+\frac{\kappa K^2}{\hat n^2}\right)}=O\left(\frac{\hat n^2}{\kappa K}\log\frac{1}{\epsilon}\right)$ and it is smaller that $\frac{\hat n}{\kappa}\log\frac{1}{\epsilon}$.

When $\hat n+\frac{\hat n}{\sqrt{\kappa}}<K<\hat n+\frac{\hat n}{\kappa}$, (\ref{cont60}) has the same order of magnitude as $K\frac{\log\frac{1}{\epsilon}}{\log \left(1+\frac{\kappa K^2}{\hat n^2}\right)}$ and it is also smaller than $\left(\hat n+\frac{\hat n}{\kappa}\right)\log\frac{1}{\epsilon}$.

\noindent Case 2: $\kappa>1$.

(\ref{cont60}) has the same order of magnitude as $K\log\frac{1}{\epsilon}=O\left(\left(\hat n+\frac{\hat n}{\kappa}\right)\log\frac{1}{\epsilon}\right)$ when $\hat n<K<\hat n+\frac{\hat n}{\kappa}$.

\small
\bibliographystyle{unsrt}
\bibliography{SDCA}

\end{document}